\documentclass[a4paper,11pt]{article}%{report} 

\usepackage[affil-it]{authblk}
\usepackage{amsmath}
\usepackage{amsfonts}
\usepackage{hyperref}

\usepackage{amsthm}
\usepackage{amssymb}
\usepackage{graphicx}
\usepackage{tensor}
\DeclareGraphicsExtensions{.jpg,.png,.pdf,.eps,.bmp,.gif}
\setlength{\hoffset}{-18pt}  	
\setlength{\oddsidemargin}{0pt} 	% Marge gauche sur pages impaires
\setlength{\evensidemargin}{9pt} 	% Marge gauche sur pages paires
\setlength{\marginparwidth}{54pt} 	% Largeur de note dans la marge
\setlength{\textwidth}{481pt} 	% Largeur de la zone de texte (17cm)
\setlength{\voffset}{-18pt} 	% Bon pour DOS
\setlength{\marginparsep}{7pt} 	% Séparation de la marge
\setlength{\topmargin}{0pt} 	% Pas de marge en haut
\setlength{\headheight}{13pt} 	% Haut de page
\setlength{\headsep}{10pt} 	% Entre le haut de page et le texte
\setlength{\footskip}{27pt} 	% Bas de page + séparation
\setlength{\textheight}{708pt}

\newtheorem{theo}{Theorem}
\newtheorem{defi}{Definition}
\newtheorem{prop}[theo]{Proposition}
\newtheorem{lem}[theo]{Lemma}
\newtheorem{cor}[theo]{Corollary}

\newtheorem{rem}[theo]{Remark}

\title{A stroll along the gamma} \author{Benjamin
  Arras\footnote{Universit\'e de Li\`ege, Sart-Tilman, All\'ee de la
    d\'ecouverte 12, B-8000 Li\`ege, Belgium
    \texttt{barras@ulg.ac.be}; BA's research is supported by a Welcome
    Grant from ULg. } \ and Yvik Swan\footnote{Universit\'e de
    Li\`ege, Sart-Tilman, All\'ee de la d\'ecouverte 12, B-8000
    Li\`ege, Belgium \texttt{yswan@ulg.ac.be}; YS gratefully
    acknowledges support from the IAP Research Network P7/06 of the
    Belgian State (Belgian Science Policy). }}  \affil{Universit\'e de
  Li\`ege}
  
  \date{}

\begin{document}
\maketitle

\begin{abstract}
  We provide the first in-depth study of  the  ``smart path'' interpolation
  between an arbitrary probability measure and the
  gamma-$(\alpha, \lambda)$ distribution. We propose new explicit
  representation formulae for the ensuing process as well as a new
  notion of relative Fisher information with a gamma target
  distribution. We use these results to prove a differential and
  an integrated De Bruijn identity which hold under minimal
  conditions, hereby extending the classical formulae which follow from
  Bakry, Emery and Ledoux's $\Gamma$-calculus. Exploiting a 
  specific representation of the ``smart path'', we  obtain a new proof of 
  the logarithmic Sobolev inequality for the gamma law with
  $\alpha\geq 1/2$ as well as
  a new type of HSI inequality linking relative entropy, Stein discrepancy and 
  standardized Fisher information for the gamma law with $\alpha\geq 1/2$.

\

\noindent {\sl AMS classification\/}: 60E15, 26D10, 60B10

\

\noindent {\sl Key words\/}: De Bruijn identity, Entropy, Fisher
information, Gamma approximation, semigroup interpolation, smart path,
representation formulae.

\end{abstract}

%Nouvelle organisation du papier:

%----- Gamma Interpolation --------
%Definition (à l'aide de la représentation valable pour tous les alpha)
%Calcul de la transformée de Laplace (et donc de la densité)
%Interpolation
%Other Representations
%Moment 1
%Moments beta
%Exponential Moments

%----- Fisher Information structure along the Gamma smart path -----
%(Less notations)
%Definitions
%Proposition (finiteness)
%Proposition (Cramer-Rao)

%----- A new formulation of the de bruijn identity-------
%Finiteness of the relative Entropy
%Local De Brujn 
%Integrated De Bruijn 

%----- A new proof of the logarithmic Sobolev inequality in the gamma case with alpha=>1/2--------
%Mehler formula for the Laguerre semigroup
%Intertwining relation
%A simple proof relying on Cauchy-Schwarz inequality

%---- Appendix: Analytic properties of the Gamma smart path density -----
%behaviour at 0+
%behaviour at infinity
%Bound on the logarithm
%Recombined Proposition 8

\section{Introduction}

Several choices of metrics are possible in order to quantify the
distance between two probability measures, $\mathbb{P}$ and
$\mathbb{Q}$ on $(\mathbb{R},\mathcal{B}(\mathbb{R}))$. A natural
choice is the so-called total variation distance between $\mathbb{P}$
and $\mathbb{Q}$ defined by:
\begin{align*}
\operatorname{TV}(\mathbb{P},\mathbb{Q})=\underset{A\in\mathcal{B}(\mathbb{R})}{\sup}\big|\mathbb{P}(A)-\mathbb{Q}(A)\big|.
\end{align*}
Assume now that $\mathbb{P}$ and $\mathbb{Q}$ are absolutely
continuous with respect to the Lebesgue measure, with densities $f$
and $g$ respectively. Then, % the total
% variation distance between $\mathbb{P}$ and $\mathbb{Q}$ boils down
% to:
% \begin{align*}
% \operatorname{TV}(\mathbb{P},\mathbb{Q})=\frac{1}{2}\|f-g\|_{L^1(\mathbb{R})}.
% \end{align*}
a strong control of the total variation distance between $\mathbb{P}$
and $\mathbb{Q}$ can be achieved by means of the well-known Pinsker
inequality:
\begin{align*}
\operatorname{TV}(\mathbb{P},\mathbb{Q})\leq \sqrt{\dfrac{D(f\|g)}{2}},
\end{align*}
where 
\begin{align*}
D(f\|g)=\int_{\mathbb{R}}f(x)\log\bigg(\dfrac{f(x)}{g(x)}\bigg)dx
\end{align*}
is the relative entropy of $f$ with respect to $g$ (or equivalently of
$\mathbb{P}$ with respect to $\mathbb{Q}$).  Controlling
efficiently the relative entropy $D(f\|g)$ in order to quantify the
distance between $\mathbb{P}$ and $\mathbb{Q}$ is a natural
strategy which has been thoroughly addressed when $\mathbb{Q}$ is
the standard Gaussian probability measure on $\mathbb{R}$ (or on
$\mathbb{R}^n$ as well),
\begin{align*}
\mathbb{Q}(dx)=\frac{1}{\sqrt{2\pi}}\exp\bigg(-\frac{x^2}{2}\bigg)dx.
\end{align*}
More precisely, many authors have provided proofs of quantitative
versions of the central limit theorem by means of relative entropy and
other information-theoretic quantities. This line of research has its
roots in Linnik's works (\cite{Linnik}), culminated with the works of
Artstein, Ball, Barthe and Naor in \cite{ABBN,BBN}, of Johnson and
Barron in \cite{Barr2} and of Tulino and Verdu in \cite{Tulino} and to
this date is still an active and fruitful area of research
\cite{BCG1,BCG2,BCG3}. 

Controlling the relative entropy is by no means easier than the total
variation and, at the core of all the previously cited literature, stands the
celebrated \emph{De Bruijn formula} linking the relative entropy with
one further ingredient: the   Fisher information. Namely, for any real random variable $X$ with mean $0$,
unit variance and whose law $\mathbb{P}=\mathbb{P}_X$ is absolutely
continuous:
\begin{equation}\label{eq:0}
D(\mathbb{P}_X\|\mathbb{Q})=\int_0^1\dfrac{J_{st}(\sqrt{t}X+\sqrt{1-t}Z)}{2t}dt
= \int_0^1\dfrac{I(\sqrt{t}X+\sqrt{1-t}Z)-1}{2t}dt,
\end{equation}  
where $Z$ is an independent standard normal random variable,
$J_{st}(Y)=\mathbb{E}[(\rho_Y(Y)+Y)^2]$ is the relative Fisher
information of $Y$, $I(Y) = E[\rho_Y(Y)^2]$ is its Fisher information
and $\rho_Y(u)=\partial/\partial_u(\log(f_Y(u)))$ is the score
function of $Y$. As discussed in \cite{Carlen2}, identity \eqref{eq:0}
holds solely under a second moment assumption on the density of $X$
and was first derived and exploited in Barron (\cite{Barr1,Barr3}).
The first occurrence of such an identity can be traced back to
\cite{BE,BGL} albeit this time under quite stringent regularity
assumptions on the density of $X$. De Bruijn's formula \eqref{eq:0} is
also of central importance for the proof of functional inequalities
such as the logarithmic Sobolev inequality and the more recent HSI
inequality (see \cite{Gross,BE,Carlen,LNP,Bobkov} and \cite{BGL} for a
modern account on this and related topics).

Gamma limit theorems have been obtained in the context of Stein's
method on Wiener chaos (see \cite{NP1,NP2}). Influenced by the
recent paper \cite{NPS}, it is therefore natural to enquire whether entropic gamma limit 
theorems on Wiener chaos can be obtained as in the Gaussian case. A 
first obstacle to a positive answer to this question is the validity
of a general De Bruijn-type formula such as \eqref{eq:0} in the gamma case.

To our knowledge, De Bruijn-type formulae with general reference
probability measure are only known in the context of $\Gamma$-calculus
(see \cite{BE,BGL}).  Let us inspect how \eqref{eq:0} translates when
the reference measure is the gamma law. For this purpose, we introduce
some notations. We denote by $\gamma_{\alpha,\lambda}(.)$ the density
of the gamma law of parameters $(\alpha,\lambda)$ given by:
\begin{align*}
\forall u>0,\ \gamma_{\alpha,\lambda}(u)=\dfrac{\lambda^{\alpha}}{\Gamma({\alpha})}u^{\alpha-1}\exp(-\lambda u).
\end{align*}
Let $P^{(\alpha,\lambda)}_{t}$, $\mathcal{L}_{\alpha,\lambda}$ and
$\Gamma_{\alpha,\lambda}$ be the Laguerre semigroup, the Laguerre
operator and the carr\'e du champs operator naturally associated with
the gamma probability measure (which is  invariant and reversible under
the Laguerre dynamic). These operators are given respectively by the
following formulae on subsets of
$L^2\big(\gamma_{\alpha,\lambda}(u)du\big)$ (see \cite{LaguerreRef}):
\begin{align*}
&P^{(\alpha,\lambda)}_{t}(f)(x)=\dfrac{\lambda^{1-\alpha}\Gamma(\alpha)e^{\lambda t}}{e^{\lambda t}-1}\int_0^{+\infty}f(u)\bigg(\dfrac{e^{\lambda t}}{xu}\bigg)^{\frac{\alpha-1}{2}}\exp\bigg(-\frac{\lambda}{e^{\lambda t}-1}(x+u)\bigg)I_{\alpha-1}\bigg(\dfrac{2\lambda \sqrt{uxe^{\lambda t}}}{e^{\lambda t}-1}\bigg)\gamma_{\alpha,\lambda}(du),\\
&\mathcal{L}_{\alpha,\lambda}(f)(u)=u\dfrac{d^2 f}{du^2}(u)+(\alpha-\lambda u)\dfrac{d f}{du}(u),\\
&\Gamma_{\alpha,\lambda}(f)(u)=u\big(\dfrac{d f}{du}(u)\big)^2,
\end{align*}
where $I_{\alpha-1}(.)$ is the modified Bessel function of the first
kind of order $\alpha-1$. We can now write the local form of De Bruijn
identity with respect to the gamma measure as stated in  \cite[Proposition
5.2.2]{BGL}: 
\begin{align}
\forall f\geq 0,\ f\in\mathcal{D}(\mathcal{E}),\ \dfrac{d}{dt}\operatorname{Ent}_{\gamma_{\alpha,\lambda}}(P^{(\alpha,\lambda)}_t(f))=-I_{\gamma_{\alpha,\lambda}}(P^{(\alpha,\lambda)}_t(f))\label{less}, 
\end{align}
with,
\begin{align*}
&\operatorname{Ent}_{\gamma_{\alpha,\lambda}}(f)=\int_0^{+\infty}f(x)\log(f(x))\gamma_{\alpha,\lambda}(x)dx,\ I_{\gamma_{\alpha,\lambda}}(f)=\int_{0}^{+\infty}\dfrac{\Gamma_{\alpha,\lambda}(f)(x)}{f(x)}\gamma_{\alpha,\lambda}(x)dx,
\end{align*}
and 
\begin{equation*}
  f\in\mathcal{D}(\mathcal{E})\Leftrightarrow \int_0^{+\infty}\Gamma_{\alpha,\lambda}(f)(x)\gamma_{\alpha,\lambda}(x)dx<+\infty.
\end{equation*}
When $f=f_X/\gamma_{\alpha,\lambda}$ with $f_X$ the Lebesgue density
of a probability measure on $(0,+\infty)$,
$\operatorname{Ent}_{\gamma_{\alpha,\lambda}}(f)=D(\mathbb{P}_X\|\gamma_{\alpha,\lambda})$
% and $I_{\gamma_{\alpha,\lambda}}(f)$ reduces to the standardized
% Fisher information distance of $\mathbb{P}_X$ with respect to the
% gamma law, 
leading to a gamma-$(\alpha, \lambda)$-specific De Bruijn identity:
\begin{equation}
  \label{eq:3}
 D(X\|\gamma_{\alpha,\lambda}) = \int_0^{\infty} J_{st,
 \gamma}(X_t) dt
\end{equation}
with $X_t$ having Lebesgue density
$\gamma_{\alpha,\lambda}(u)P^{(\alpha,\lambda)}_{t}\big(f_X/\gamma_{\alpha,\lambda}\big)(u)$
and $J_{st, \gamma}(Y) = E [ Y ( \rho_Y(Y)+ \lambda-(\alpha-1)/Y)^2]$
the resulting relative Fisher information (see Section \ref{sec-Fish}
for explanations).

This semigroup approach is not reserved towards a gamma target
probability measure and can be adapted e.g.\ to invariant measures of
second order differential operators, which also includes beta
distributions as well as families of log-concave measures as
illustrations. Identity \eqref{eq:3} is, however, not as satisfactory
as the Gaussian de Bruijn identity \eqref{eq:0} for two reasons.
\begin{enumerate}
\item \label{item:2} First, if $X$ is a Wiener chaos random variable,
  the condition
  $f_X/\gamma_{\alpha,\lambda}\in\mathcal{D}(\mathcal{E})$ is not
  testable since very little information regarding this type of random
  variable is available in the literature (see
  \cite{Jan,Nua,NP3}). Moreover, this regularity assumption upon the
  law of $X$ is a tad uncanny compared to the Gaussian case where only
  moment conditions need to hold to ensure the validity of the local
  form of the Gaussian De Bruijn identity (see
  \cite{Barr3,Barr1,Carlen2}).
\item \label{item:1} Second, and perhaps more importantly, there lacks
  an explicit representation of $X_t$ in terms of the contributions
  $X$ and $\gamma_{\alpha, \lambda}$; such a representation (in the
  Gaussian case $X_t = \sqrt{t} X + \sqrt{1-t}Z$) is indeed at the
  heart of all above cited literature and its absence in the gamma
  case dashes any hopes of developing similar techniques for a gamma
  target as those developed in the Gaussian case.
\end{enumerate}
The main results of this paper address these two issues : in Section
\ref{sec-GamInter} we present explicit stochastic representations of
the interpolation scheme between the probability measures
$\mathbb{P}_X$ and $\gamma_{\alpha,\lambda}$ along the Laguerre
dynamic under minimal assumptions on $\mathbb{P}_X$ (hereby solving
point \ref{item:1}) which we then use in Section \ref{sec-DeBruijn} to
overcome the regularity conditions necessary for the local De Bruijn
identity (hereby solving point \ref{item:2}). In particular we obtain
very general forms of local and integrated De Bruijn identity in the
gamma case for $\alpha\geq 1/2$ (see Theorem \ref{DeBruijn2} and Remark \ref{better} and
Theorem \ref{sec:de-bruijn-formula} and Remark \ref{sec:shannon},
respectively). As applications we obtain a new proof of the
logarithmic Sobolev inequality in the case $\alpha\ge 1/2$
(Proposition \ref{LSI}, Section \ref{sec-LSI}) as well as a new form
of HSI inequality for the gamma measure with $\alpha \ge 1/2$ (Theorem
\ref{HSI}, Section \ref{sec-HSI}).

 To enhance the readability of the text, the next Subsection
 \ref{sec:overview-paper} contains an intuitive description of our
 method and of our main results, as well as more insight into the
 difference between our results and those already available from the
 literature (with particular emphasis on a comparison with the
 well-trodden Gaussian case and formula \eqref{eq:0}). 

\subsection{Overview of the results}
\label{sec:overview-paper}

Let $X$ be a positive random variable. The crucial first step of this
article is to identify the correct probabilistic representation for
the smart path evolute $X_{t}$ from formula \eqref{eq:3}. Our
construction is a priori not intuitive, as it relies on an
interpolation between the target gamma distribution and a random
variable $Y = Y(X)$ defined according to the following three
stage-procedure. Fix $\tau\in (0,1)$. First draw a (strictly positive)
real number, $x$, according to $\mathbb{P}_X$; next draw a Poisson
random variable with parameter $x\lambda\tau/(1-\tau)$. Finally,
depending on the outcome, draw a gamma random variable with shape
parameter equal to the value of the Poisson random variable (with the
convention that a gamma random variable with shape parameter equal to
0 is 0 almost surely) and scale parameter equal to
$\lambda \tau/(1-\tau)$.  We denote $Y(\tau,X,\lambda)$ the resulting
random variable and define
\begin{equation}\label{eq:5}
X_{\tau}=\big(1-\tau\big)\gamma(\alpha,\lambda)+\tau Y(\tau,X,\lambda),
\end{equation}
with $\gamma(\alpha, \lambda)$ an independent gamma random variable
with density $\gamma_{\alpha,\lambda}$ (see Definition \ref{main} for
a more formal construction). 

In Section \ref{sec-GamInter} we will show that $X_{\tau}$ as defined
by \eqref{eq:5} is the gamma-$(\alpha, \lambda)$ equivalent of the
Gaussian (Ornstein-Uhlenbeck) interpolation
$X_t=\sqrt{t}X + \sqrt{1-t}Z$. Note in particular how, in
\eqref{eq:5}, the contributions of $X$ and $\gamma(\alpha, \lambda)$
are dissociated, as is desirable for applications. Moreover, when $\mathbb{P}_X$ admits a Lebesgue density, $f_X$, the random
variable $X_{\tau}$ is linked to the Laguerre dynamic through the
following formula for its density $g^{(\alpha,\lambda)}(\tau,u)$:
\begin{align*}
g^{(\alpha,\lambda)}(\tau,u)=\gamma_{\alpha,\lambda}(u)P^{(\alpha,\lambda)}_{-\frac{\log(\tau)}{\lambda}}\big(f_X/\gamma_{\alpha,\lambda}\big)(u).
\end{align*}
  % Also note that in \eqref{eq:5} the
% contributions of $X$ and $\gamma(\alpha, \lambda)$ are dissociated;
% this mimics the interpolation scheme of the Ornstein-Uhlenbeck dynamic
% from formula \eqref{eq:0} (where the smart path interpolation is
% represented as $ X_t = \sqrt{t}X + \sqrt{1-t}N$ where $N$ is an
% independent standard Gaussian random variable).
% Thus, in this case, we have precisely the following equality:
% \begin{align*}
% \operatorname{Ent}_{\gamma_{\alpha,\lambda}}\bigg(P^{(\alpha,\lambda)}_{-\frac{\log(\tau)}{\lambda}}\big(\frac{f_X}{\gamma_{\alpha,\lambda}}\big)\bigg)=D(X_{\tau}\|\gamma_{\alpha,\lambda}). 
% \end{align*}
The construction of the random variable $X_{\tau}$ and the existence
of $g^{(\alpha,\lambda)}(\tau,u)$ do not rely on the existence of a
Lebesgue density for the initial probability measure, $\mathbb{P}_X$.
% As we will show in Section \ref{sec-Fish}, this absence of assumptions
% ensure
Working directly with $D(X_{\tau}\|\gamma_{\alpha,\lambda})$ will lead
to a substantial improvement of the local form of De Bruijn identity
(\ref{less}), as will be proved in Theorem \ref{DeBruijn2}, Section
\ref{sec-DeBruijn}, where we obtain an equivalent identity which holds
true solely under a moment condition for $\mathbb{P}_X$ whether a
density exists or not for the random variable $X$.

The relative Fisher information we identify for the gamma target
distribution is the same as that obtained through the semigroup
approach in \eqref{eq:3}, namely
$ J_{st, \gamma}(Y) = E [ Y ( \rho_Y(Y)+ \lambda-(\alpha-1)/Y)^2]$,
with $\rho_Y(u)$ the classical score of $Y$. Still using the notation
$\gamma(\alpha, \lambda)$ for a gamma-$(\alpha, \lambda)$ distributed
random variable we note how
$\rho_{\gamma(\alpha, \lambda)}(u) = (\alpha-1)/u - \lambda$ so that
the relative Fisher information can equivalently be rewritten
$ J_{st, \gamma}(Y) = E [ Y ( \rho_Y(Y)-\rho_{\gamma(\alpha,
  \lambda)}(Y))^2]$.
When $\alpha>1$ one can simply expound the square and apply
integration by parts to obtain
\begin{align*}
  J_{st, \gamma}(Y) & =  E [ Y
                      \rho_Y(Y)^2]  -  E [ Y  \rho_{\gamma(\alpha, \lambda)}(Y)^2]  
\end{align*} 
which holds as long as $E[Y] = \alpha/\lambda$ and
$E[1/Y] = \lambda/(\alpha-1)$. In view of this simple and intuitive
computation it might be tempting to introduce a new ``gamma-Fisher
information'' of the form $I_Y(Y) = E [ Y \rho_Y(Y)^2]$ for which the
above computation leads to the elegant fact that the relative Fisher
information decomposes into the difference of gamma-Fisher
informations, similarly as in the Gaussian case. The quantity
$I_Y(Y)$, however, suffers many flaws, the most cumbersome of which
being that it is only finite if $\alpha>1$ if
$Y\sim \gamma_{\alpha, \lambda}$. Such an assumption is not natural.
Interestingly, along the course of the proofs of Theorems
\ref{DeBruijn2} and \ref{sec:de-bruijn-formula} (integrated version of
De Bruijn identity), we will be led to introducing a new notion of
Fisher information (Definition \ref{NewFisher}):
\begin{align*}
  I^{\tau}_{\gamma}(Y)=\mathbb{E}\big[Y\big(\rho_Y(Y)+\frac{\lambda}{(1-\tau)}
  - \frac{\alpha-1}{Y}\big)^2\big]. 
\end{align*}
 Note how 
$I^{\tau}_{\gamma}(\gamma(\alpha, \lambda))= {\alpha\lambda
  \tau^2}/{(1-\tau)^2}$ is finite for every value of $\alpha>0$. We will also prove 
(Proposition \ref{FiniteFisher}) that this information satisfies the
Cramer-Rao inequality  
\begin{align*}
  J_{st,\gamma}(X_{\tau})=I^{\tau}_{\gamma}(X_{\tau})-\dfrac{\alpha\lambda\tau^2}{(1-\tau)^2}\geq 0.
\end{align*}
at all points $\tau$ along the smart path (see Section \ref{sec-Fish},
Propositions \ref{FiniteFisher} and \ref{CramerRao}) when $\mathbb{E}[X]=\alpha/\lambda$.  This corrected
and localized version of Fisher information seems to be the relevant
one for the study of Gamma comparison, and is the key to our new De
Bruijn identity for the gamma target under minimal assumptions. It
also opens the way for our two main applications described in Sections
\ref{sec-LSI} and \ref{sec-HSI}. % for several reasons. Indeed,
% first of all, we note that if $Y$ is a gamma random variable with
% parameters $(\alpha,\nu)$, the singularity in the score function of
% $\rho_Y(u)$ cancels out with the one in
% $\rho_{\alpha,\lambda/(1-\tau)}(u)$ making $I^{\tau}_{\gamma}(Y)$
% readily finite. Secondly, when the probability measure,
% $\mathbb{P}_X$, admits a Lebesgue density and possesses the same mean
% as $\gamma_{\alpha,\lambda}$, we have the following relations:

% \begin{align}
% I_{\gamma_{\alpha,\lambda}}\bigg(P^{(\alpha,\lambda)}_{-\frac{\log(\tau)}{\lambda}}\big(\frac{f_X}{\gamma_{\alpha,\lambda}}\big)\bigg)
% &=\mathbb{E}\big[X_{\tau}\big(\rho_{X_{\tau}}(X_{\tau})-\rho_{\alpha,\lambda}(X_{\tau})\big)^2\big],\label{eq:4}\\
% &=I^{\tau}_{\gamma}(X_{\tau})-\dfrac{\alpha\lambda\tau^2}{(1-\tau)^2}. \nonumber
% \end{align} 
% Thus, studying in full generality $I^{\tau}_{\gamma}(X_{\tau})$ thanks
% to the stochastic representation of the gamma smart path is another
% argument in favor for extensions of local and integrated version of De
% Bruijn formulae in the gamma case.

% \subsection{Applications : LSI and HSI}
% \label{sec:stein-kernel-hsi}
As a first application, we obtain a new proof of the logarithmic
Sobolev inequality in the gamma case for $\alpha\geq 1/2$ (see
Proposition \ref{LSI}). Thanks to the Bakry-Emery criterion, this
functional inequality is known to hold for $\alpha\geq 1/2$ with a
constant independent of $\alpha$ (see \cite{Bakry}). The method of the
proof developed in Section $5$ extends the well-known Cauchy-Schwarz
argument of the Gaussian case. Namely, we obtain a new representation
for the Laguerre semigroup thanks to the following identity in law
(see Proposition \ref{Melherlike3}):
\begin{align*}
X_{\tau}\overset{\mathcal{L}}{=}(1-\tau)\gamma\big(\alpha-\frac{1}{2},\lambda\big)+\bigg(\sqrt{\tau}\sqrt{X}+\sqrt{\dfrac{1-\tau}{2\lambda}}Z\bigg)^2.
\end{align*}
This equality allows us to derive an appropriate intertwining relation which leads to a fundamental sub-commutation inequality in order to control the standardized Fisher information structure along the gamma smart path by the standardized Fisher information structure of $X$ (see Lemma \ref{MehInter} and Proposition \ref{LSI}).

The second and final application presented in this paper uses the
previous stochastic representation for the gamma smart path, we are
able to derive a new HSI inequality for the gamma law with
$\alpha\geq 1/2$ (see Theorem \ref{HSI}). HSI inequalities for
different types of probability law were introduced for the first time
in \cite{LNP}. They allow to link the relative entropy (H) and the
standardized Fisher information (I) to another kind of distance
between probability measures, namely the Stein discrepancy (S). This
Stein discrepancy is defined by means of a natural implicit quantity
from Stein approximation method, the Stein kernel (see
\cite{CPU94,Cha08,NP1,Cha09,Cha12,NP3,NPS, NPS2}). Moreover, in the Gaussian
case, it is proved in \cite{LNP} that this inequality improves upon
the classical logarithmic Sobolev inequality (see Theorem
$2.2$). Regarding the gamma case, the authors of \cite{LNP} obtain an
HSI inequality in Proposition $4.3$. Our result is different from
theirs since we do not use the same Stein kernel. Namely, for random
variables with values in $(0,+\infty)$, ours is defined via the
integration by parts identity:
\begin{align*}
\forall\phi\in\mathcal{C}^{\infty}_c\big((0,+\infty)\big),\ \mathbb{E}[(\lambda X-\alpha+\frac{1}{2})\phi(X)]=\mathbb{E}[\tau_X(X)\sqrt{X}(\partial_x^{\sigma})^*(\phi)(X)],
\end{align*}
where $(\partial_x^{\sigma})^*(.)=\frac{d}{dx}(\sqrt{x}.)$. Let us comment briefly on this choice. First of all, we note that the Laguerre operator can be rewritten using the operator $\partial_x^{\sigma}=\sqrt{x}\frac{d.}{dx}$ in the following way:
\begin{align*}
\mathcal{L}_{\alpha,\lambda}(\phi)(u)=(\partial_u^{\sigma})^2(\phi)(u)+(\alpha-\frac{1}{2}-\lambda u)\partial_u(\phi)(u).
\end{align*}
Moreover, plugging $\phi'$ in the definition of the Stein kernel, we have:
\begin{align*}
\mathbb{E}[(\lambda X-\alpha+\frac{1}{2})\phi'(X)]=\mathbb{E}[\tau_X(X)(\partial_x^{\sigma})^2(\phi)(X)].
\end{align*}
In particular, for a gamma random variable with parameters $(\alpha,\lambda)$, we have:
\begin{align*}
\mathbb{E}[(\lambda \gamma_{\alpha,\lambda}-\alpha+\frac{1}{2})\phi'(\gamma_{\alpha,\lambda})]=\mathbb{E}[(\partial_x^{\sigma})^2(\phi)(\gamma_{\alpha,\lambda})].
\end{align*}
Thus, the proximity of $\tau_X(X)$ to $1$ should indicate the
proximity (in law) of $X$ to the random variable
$\gamma_{\alpha,\lambda}$. The Stein discrepancy is then defined as
the quadratic distance between $\tau_X(X)$ and $1$. The HSI inequality
obtained in Theorem \ref{HSI} realizes this intuition by providing an
explicit bound of the relative entropy of $X$ with respect to
$\gamma_{\alpha,\lambda}$ in terms of the Stein discrepancy and the
standardized Fisher information of $X$. It improves upon the
logarithmic Sobolev inequality for the gamma case as obtained in
Proposition \ref{LSI}. Moreover, the appearance of the differential
operator $\partial_x^{\sigma}$ is canonically linked to the Laguerre
dynamic. Indeed, under the action of this operator, the Laguerre
semigroup admits stochastic representations such as the intertwining
relation of Lemma \ref{MehInter} and the Bismut-type formulae of Lemma
\ref{Bismut}. These representations are pivotal to establish a
fundamental representation of the standardized Fisher information
structure along the smart path (see Proposition \ref{FisherRep}).

\subsection{Structure of the paper}
\label{sec:structure-paper}

The article is organized as follows. In the next section, we define
the stochastic representation of the interpolation scheme along the
Laguerre dynamic and derive several identities in law as well as
standard properties. Section \ref{sec-Fish} is devoted to the study of
the Fisher information structure along the gamma smart path. In
Section \ref{sec-DeBruijn}, we prove the local and integrated version
of De Bruijn formulae for the gamma case for $\alpha\geq 1/2$. Section \ref{sec-LSI}
contains the new proof of the logarithmic Sobolev inequality for the
gamma case with $\alpha\geq1/2$ and Section \ref{sec-HSI} contains the
tools in order to establish the new HSI inequality for the gamma law
with $\alpha\geq1/2$. Finally, Section \ref{sec-Append} collects the
more technical proofs regarding analytical properties of the density
of $X_{\tau}$.

\section{Gamma Interpolation}\label{sec-GamInter}

%%%%% smart path analogie avec le cas gaussien

In this section, we consider a probability distribution on
$]0,+\infty[$ denoted by $\mathbb{P}_X$. We denote by $X$ the
associated random variable. In the next definition, we introduce the
parametrized random variable, namely the gamma smart path, which
``interpolates'' between the gamma law and the random variable
$X$. This random variable $X_{\tau}$, with $\tau\in(0,1)$, is built
via a three-stage explicit procedure depending on $X$.
\begin{defi}[The gamma smart path]\label{main}
  Let $\tau\in (0,1)$. Let $x\in(0,+\infty)$ be drawn according to the
  law of $X$. Let $K(\tau,x,\lambda)$ be a Poisson random variable of
  parameter $x\lambda\tau/(1-\tau)$ independent of $X$. Let
  $Y(\tau,x,\lambda)$ be a random variable which is drawn in the
  following way:
\begin{align*}
Y(\tau,x,\lambda)=
\begin{cases}
0 & K(\tau,x,\lambda)=0\\
\tilde{\gamma}\big(k,\frac{\lambda \tau}{1-\tau}\big) & K(\tau,x,\lambda)=k,
\end{cases}
\end{align*}
where $\tilde{\gamma}\big(k,\frac{\lambda\tau}{1-\tau}\big)$ is a gamma random variable independent of $\{X,K(\tau,x,\lambda)\}$. Then, we define $X_{\tau}$ by:
\begin{align*}
X_{\tau}=\big(1-\tau\big)\gamma(\alpha,\lambda)+\tau Y(\tau,X,\lambda),
\end{align*}
where $\gamma(\alpha,\lambda)$ is a gamma random variable independent of $\{X,K(\tau,x,\lambda), \tilde{\gamma}\big(K(\tau,x,\lambda),\frac{\lambda\tau}{1-\tau}\big)\}$.
Moreover, we denote the density of $X_{\tau}$ by $g^{(\alpha,\lambda)}(\tau,.)$ which is completely characterized by the following formulae:
\begin{align}
\forall u>0,\ g^{(\alpha,\lambda)}(\tau,u)&=\dfrac{\lambda}{1-\tau}\bigg(\dfrac{u}{\tau}\bigg)^{\frac{\alpha-1}{2}}\exp\bigg(-\dfrac{\lambda u}{1-\tau}\bigg)\nonumber\\
&\times\int_0^{+\infty}(\frac{1}{x})^{\frac{\alpha-1}{2}}\exp\bigg(-\dfrac{\lambda\tau x}{1-\tau}\bigg)I_{\alpha-1}\bigg(\dfrac{2\lambda\sqrt{ux\tau}}{1-\tau}\bigg)\mathbb{P}_X(dx),\label{den}\\
\forall \mu>0,\ L(g^{(\alpha,\lambda)}(\tau,.))(\mu)&=\dfrac{1}{\bigg(1+\frac{\mu}{\lambda}(1-\tau)\bigg)^{\alpha}}\int_0^{+\infty}\exp\bigg(-\dfrac{\mu \tau x}{1+\frac{\mu}{\lambda}(1-\tau)}\bigg)\mathbb{P}_X(dx),\label{lap}
\end{align}
with $L$ being the Laplace transform operator. 
\end{defi}
\begin{rem}
From the definition of the random variable $X_{\tau}$, it is easy to compute its Laplace transform providing formula (\ref{lap}). Moreover, using the series expansion of the modified Bessel function of the first kind of order $\alpha-1$, it is easy to compute the Laplace transform of the right-hand side of formula (\ref{den}) leading to the corresponding formula for the density of $X_{\tau}$. The details are left to the interested reader.
\end{rem}
\noindent
In the next simple lemma, we prove that the law of the random
variable $X_{\tau}$ interpolates between a gamma law of parameters
$(\alpha,\lambda)$ and the probability measure $\mathbb{P}_X$.
\begin{lem}\label{Inter}
We have:
\begin{align*}
&\underset{\tau\rightarrow0^+}{\lim}X_{\tau}\overset{\mathcal{L}}{=}\gamma(\alpha,\lambda),\\
&\underset{\tau\rightarrow1^-}{\lim}X_{\tau}\overset{\mathcal{L}}{=}X.
\end{align*}
\end{lem}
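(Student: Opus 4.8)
The plan is to establish the two limits by working with the Laplace transform formula \eqref{lap}, since convergence in law on $(0,+\infty)$ is equivalent to pointwise convergence of Laplace transforms $L(g^{(\alpha,\lambda)}(\tau,\cdot))(\mu)$ for all $\mu>0$ (L\'evy continuity theorem, together with tightness which follows automatically once the limiting transform is itself the Laplace transform of a probability measure). This route is cleaner than juggling the three-stage construction directly, and it sidesteps any need to know whether $\mathbb{P}_X$ has a density.

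First I would treat the case $\tau\to 0^+$. Fix $\mu>0$. In the prefactor $\big(1+\tfrac{\mu}{\lambda}(1-\tau)\big)^{-\alpha}$ one has $1-\tau\to 1$, so this term tends to $\big(1+\tfrac{\mu}{\lambda}\big)^{-\alpha}$. Inside the integral, the exponent $-\mu\tau x/\big(1+\tfrac{\mu}{\lambda}(1-\tau)\big)\to 0$ for each fixed $x$, and the integrand is bounded by $1$ uniformly in $x$ and $\tau$, so by dominated convergence $\int_0^{+\infty}\exp\big(-\mu\tau x/(1+\tfrac\mu\lambda(1-\tau))\big)\,\mathbb{P}_X(dx)\to 1$. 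Hence $L(g^{(\alpha,\lambda)}(\tau,\cdot))(\mu)\to \big(1+\tfrac{\mu}{\lambda}\big)^{-\alpha}$, which is exactly the Laplace transform of $\gamma(\alpha,\lambda)$. This gives the first limit.

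Next, for $\tau\to 1^-$: now $1-\tau\to 0$, so the prefactor $\big(1+\tfrac{\mu}{\lambda}(1-\tau)\big)^{-\alpha}\to 1$, and the exponent inside the integral becomes $-\mu\tau x/\big(1+\tfrac\mu\lambda(1-\tau)\big)\to -\mu x$ for each fixed $x$. Again the integrand is dominated by $1$, so dominated convergence yields $L(g^{(\alpha,\lambda)}(\tau,\cdot))(\mu)\to \int_0^{+\infty}e^{-\mu x}\,\mathbb{P}_X(dx)$, the Laplace transform of $X$. Invoking the L\'evy continuity theorem for Laplace transforms of measures on $[0,+\infty)$ (the pointwise limit is in each case a genuine Laplace transform of a probability measure, so no mass escapes to infinity) concludes the proof of both assertions.

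I do not expect any real obstacle here; the only mild point of care is justifying the interchange of limit and integral, which in both regimes is immediate from the uniform bound $0<\exp\big(-\mu\tau x/(1+\tfrac\mu\lambda(1-\tau))\big)\le 1$ together with the fact that $\mathbb{P}_X$ is a probability measure, so constant functions are integrable. One could alternatively give a direct probabilistic argument from Definition \ref{main} — as $\tau\to 0^+$ the Poisson parameter $x\lambda\tau/(1-\tau)\to 0$ forces $K(\tau,x,\lambda)=0$ hence $Y(\tau,X,\lambda)=0$ with probability tending to $1$, leaving $X_\tau\approx(1-\tau)\gamma(\alpha,\lambda)\to\gamma(\alpha,\lambda)$; and as $\tau\to 1^-$ the random variable $\tau Y(\tau,X,\lambda)$ is, conditionally on $X=x$, a compound-Poisson-type object whose Laplace transform one checks converges to $e^{-\mu x}$ — but the Laplace-transform computation above is the most economical and is the approach I would write up.
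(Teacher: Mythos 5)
Your proposal is correct and follows essentially the same route as the paper: pointwise convergence of the Laplace transform \eqref{lap} via dominated convergence (using the uniform bound by $1$), followed by a L\'evy continuity theorem for Laplace transforms on $(0,+\infty)$. No discrepancies to report.
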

\begin{proof}
Let $\mu>0$. The Laplace transform of $g^{(\alpha,\lambda)}(\tau,.)$ is given by (\ref{lap}):
\begin{align*}
\forall \mu>0,\ L(g^{(\alpha,\lambda)}(\tau,.))(\mu)&=\dfrac{1}{\bigg(1+\frac{\mu}{\lambda}(1-\tau)\bigg)^{\alpha}}\int_0^{+\infty}\exp\bigg(-\dfrac{\mu \tau x}{1+\frac{\mu}{\lambda}(1-\tau)}\bigg)\mathbb{P}_X(dx).
\end{align*}
Note that, for every $x\ne0$:
\begin{align*}
&\underset{\tau\rightarrow0^+}{\lim}\dfrac{1}{\bigg(1+\frac{\mu}{\lambda}(1-\tau)\bigg)^{\alpha}}\exp\bigg(-\dfrac{\mu \tau x}{1+\frac{\mu}{\lambda}(1-\tau)}\bigg)=\dfrac{1}{\big(1+\frac{\mu}{\lambda}\big)^{\alpha}}=L(\gamma(\alpha,\lambda))(\mu),\\
&\underset{\tau\rightarrow1^-}{\lim}\dfrac{1}{\bigg(1+\frac{\mu}{\lambda}(1-\tau)\bigg)^{\alpha}}\exp\bigg(-\dfrac{\mu \tau x}{1+\frac{\mu}{\lambda}(1-\tau)}\bigg)=\exp(-\mu x),\\
&|\dfrac{1}{\bigg(1+\frac{\mu}{\lambda}(1-\tau)\bigg)^{\alpha}}\exp\bigg(-\dfrac{\mu \tau x}{1+\frac{\mu}{\lambda}(1-\tau)}\bigg)|\leq 1.
\end{align*}
Thus, by Lebesgue dominated convergence theorem, we obtain:
\begin{align*}
&\underset{\tau\rightarrow0^+}{\lim}L(g^{(\alpha,\lambda)}(\tau,.))(\mu)=L(\gamma(\alpha,\lambda))(\mu),\\
&\underset{\tau\rightarrow1^-}{\lim}L(g^{(\alpha,\lambda)}(\tau,.))(\mu)=L(\mathbb{P}_X)(\mu).
\end{align*}
We conclude by a Laplace transform version of Levy Theorem for probability measure on $(0,+\infty)$.
\end{proof}
\noindent
As a direct application of Definition \ref{main}, we obtain the following formula for the mean of the smart path $X_{\tau}$.
\begin{cor}\label{mean}
For any $\tau>0$, we have:
\begin{align*}
\mathbb{E}\big[X_{\tau}\big]=(1-\tau)\dfrac{\alpha}{\lambda}+\tau\mathbb{E}\big[X\big].
\end{align*}
\end{cor}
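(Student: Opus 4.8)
The plan is to compute $\mathbb{E}[X_\tau]$ directly from the defining three-stage construction in Definition \ref{main}, using linearity of expectation and repeated conditioning. Since $X_\tau = (1-\tau)\gamma(\alpha,\lambda) + \tau Y(\tau,X,\lambda)$ with $\gamma(\alpha,\lambda)$ independent of everything else, we immediately get $\mathbb{E}[X_\tau] = (1-\tau)\mathbb{E}[\gamma(\alpha,\lambda)] + \tau\,\mathbb{E}[Y(\tau,X,\lambda)]$, and the first term is $(1-\tau)\alpha/\lambda$ since a gamma-$(\alpha,\lambda)$ variable has mean $\alpha/\lambda$. So everything reduces to showing $\mathbb{E}[Y(\tau,X,\lambda)] = \mathbb{E}[X]$.

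For that, I would condition first on $X = x$ and then on the Poisson variable $K = K(\tau,x,\lambda)$. Given $K = k \geq 1$, the variable $Y$ is $\tilde\gamma(k, \lambda\tau/(1-\tau))$, which has mean $k(1-\tau)/(\lambda\tau)$; given $K = 0$, $Y = 0$, which is consistent with the same formula at $k=0$. Hence $\mathbb{E}[Y \mid X = x, K = k] = k(1-\tau)/(\lambda\tau)$ for all $k \geq 0$, and taking expectation over $K$, which is Poisson with parameter $x\lambda\tau/(1-\tau)$ and thus has mean $x\lambda\tau/(1-\tau)$, yields $\mathbb{E}[Y \mid X = x] = \big(x\lambda\tau/(1-\tau)\big)\cdot\big((1-\tau)/(\lambda\tau)\big) = x$. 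Taking expectation over $X$ gives $\mathbb{E}[Y(\tau,X,\lambda)] = \mathbb{E}[X]$, and assembling the pieces gives the claimed identity.

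Alternatively, and perhaps even more cleanly, one can differentiate the Laplace transform formula \eqref{lap}: since $\mathbb{E}[X_\tau] = -\frac{d}{d\mu}L(g^{(\alpha,\lambda)}(\tau,\cdot))(\mu)\big|_{\mu=0^+}$, a direct computation of the derivative at $\mu = 0$ of $(1+\frac{\mu}{\lambda}(1-\tau))^{-\alpha}\int_0^\infty \exp(-\mu\tau x/(1+\frac{\mu}{\lambda}(1-\tau)))\,\mathbb{P}_X(dx)$ produces the two terms $(1-\tau)\alpha/\lambda$ and $\tau\mathbb{E}[X]$ by the product and chain rules. Either route works; I would present the conditioning argument since it is transparent and requires no justification of differentiation under the integral sign.

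There is no real obstacle here: the only mild point to be careful about is the convention that a gamma random variable with shape parameter $0$ equals $0$ almost surely, so that the formula $\mathbb{E}[\tilde\gamma(k,\lambda\tau/(1-\tau))] = k(1-\tau)/(\lambda\tau)$ holds uniformly in $k \geq 0$ and the conditioning collapses neatly; and implicitly one needs $\mathbb{E}[X] < \infty$ for the statement to be non-vacuous, which is used when interchanging expectation and the Poisson sum (a Tonelli argument suffices since everything is nonnegative).
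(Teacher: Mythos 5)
Your proof is correct and follows the same route the paper intends: the paper states Corollary \ref{mean} as a "direct application of Definition \ref{main}" without further detail, and your conditioning computation (gamma mean $(1-\tau)\alpha/\lambda$ plus $\mathbb{E}[Y\mid X=x,K=k]=k(1-\tau)/(\lambda\tau)$, then averaging over the Poisson mean $x\lambda\tau/(1-\tau)$ to get $\mathbb{E}[Y\mid X]=X$) is exactly that direct application, consistent with the rate parametrization used in \eqref{lap}. Nothing is missing.
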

\noindent
We note as well the following nice property regarding convolutions.
\begin{cor}\label{ConvSP}
Let $\{X_i,\ i\in1,...,N\}$ be a collection of independent random variables almost surely positive. Let $X=\sum_{i=1}^{N}X_i$. Then, we have:
\begin{align*}
X_{\tau}\big(X\big)&\overset{\mathcal{L}}{=}(1-\tau)\gamma(\alpha,\lambda)+\tau\sum_{i=1}^{N}Y^i\big(\tau,X_i,\lambda\big),\\
&\overset{\mathcal{L}}{=}\sum_{i=1}^N\bigg((1-\tau)\gamma^i\big(\frac{\alpha}{N},\lambda\big)+\tau Y^i\big(\tau,X_i,\lambda\big)\bigg).
\end{align*}
\end{cor}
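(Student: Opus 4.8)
The plan is to work entirely at the level of Laplace transforms, since formula \eqref{lap} already expresses $L(g^{(\alpha,\lambda)}(\tau,\cdot))$ in terms of $\mathbb{P}_X$ in a product-friendly way. First I would record that for a single almost surely positive random variable $W$, Definition \ref{main} gives
\begin{align*}
L\big(X_\tau(W)\big)(\mu)=\dfrac{1}{\big(1+\tfrac{\mu}{\lambda}(1-\tau)\big)^{\alpha}}\,\mathbb{E}\bigg[\exp\bigg(-\dfrac{\mu\tau W}{1+\tfrac{\mu}{\lambda}(1-\tau)}\bigg)\bigg],
\end{align*}
and that the ``$Y$-part'' alone, $Y(\tau,W,\lambda)$, has Laplace transform $\mathbb{E}[\exp(-\mu\tau Y(\tau,W,\lambda))]=\mathbb{E}\big[\exp\big(-\tfrac{\mu\tau W}{1+\tfrac{\mu}{\lambda}(1-\tau)}\big)\big]$ by the same Poisson-mixing computation alluded to in the Remark after Definition \ref{main} (one expands the Poisson law, uses that a $\tilde\gamma(k,\lambda\tau/(1-\tau))$ contributes a factor $(1+\mu\tau/(\lambda\tau/(1-\tau)))^{-k}$ after the extra scaling by $\tau$, and resums the exponential series). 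Equivalently one just reads off that $L(X_\tau(W))(\mu)=(1+\tfrac{\mu}{\lambda}(1-\tau))^{-\alpha}\,L(\tau Y(\tau,W,\lambda))(\mu)$, which is exactly the statement that $X_\tau(W)\overset{\mathcal L}{=}(1-\tau)\gamma(\alpha,\lambda)+\tau Y(\tau,W,\lambda)$ — i.e. the very definition — so the content of the corollary is in how the $Y$-part behaves under convolution.

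The first identity in law then follows by taking $W=X=\sum_{i=1}^N X_i$ and noting that, conditionally on $(X_1,\dots,X_N)$, a Poisson variable of parameter $X\lambda\tau/(1-\tau)=\sum_i X_i\lambda\tau/(1-\tau)$ splits as an independent sum of Poisson variables of parameters $X_i\lambda\tau/(1-\tau)$ (Poisson thinning/superposition). Feeding each piece into an independent gamma with common scale $\lambda\tau/(1-\tau)$ and recalling that a gamma with shape $k_1+\cdots+k_N$ equals in law the sum of independent gammas with shapes $k_i$ (with the convention shape $0\mapsto 0$ a.s.), we get $Y(\tau,X,\lambda)\overset{\mathcal L}{=}\sum_{i=1}^N Y^i(\tau,X_i,\lambda)$ with the $Y^i$ independent. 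Adjoining the single independent $(1-\tau)\gamma(\alpha,\lambda)$ term yields the first line. On the Laplace side this is the purely algebraic statement that $\mathbb{E}[\exp(-\tfrac{\mu\tau X}{1+\frac\mu\lambda(1-\tau)})]=\prod_i\mathbb{E}[\exp(-\tfrac{\mu\tau X_i}{1+\frac\mu\lambda(1-\tau)})]$, immediate from independence of the $X_i$ — so if one prefers, the whole first identity can be proven by matching Laplace transforms and invoking the Laplace-transform Lévy theorem used in the proof of Lemma \ref{Inter}.

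For the second identity in law I would use the (standard, easily checked by Laplace transforms) fact that an independent sum of gammas with the \emph{same} scale $\lambda$ and shapes $\alpha/N$ is a gamma with scale $\lambda$ and shape $\alpha$: $\gamma(\alpha,\lambda)\overset{\mathcal L}{=}\sum_{i=1}^N\gamma^i(\alpha/N,\lambda)$ with the $\gamma^i$ independent. Substituting this decomposition of $(1-\tau)\gamma(\alpha,\lambda)$ into the first identity and regrouping the $i$-th gamma summand with the $i$-th term $\tau Y^i(\tau,X_i,\lambda)$ (all summands being mutually independent, since the $Y^i$ depend only on $X_i$ and their own auxiliary Poisson/gamma randomness) gives $X_\tau(X)\overset{\mathcal L}{=}\sum_{i=1}^N\big((1-\tau)\gamma^i(\alpha/N,\lambda)+\tau Y^i(\tau,X_i,\lambda)\big)$, which is the second line. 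I do not expect a genuine obstacle here; the only point demanding a little care is the bookkeeping of independence among all the auxiliary variables (the Poisson thinning must be done jointly, and one should note the shape-$0$ convention makes the gamma-splitting identity hold even when some $k_i=0$), together with justifying the Poisson-superposition step at the level of the conditional laws rather than just marginally. If one wants to sidestep all probabilistic bookkeeping, every step above has a one-line Laplace-transform counterpart and the proof reduces to three applications of $L(W_1+W_2)=L(W_1)L(W_2)$ plus the identity $(1+\tfrac\mu\lambda(1-\tau))^{-\alpha}=\prod_{i=1}^N(1+\tfrac\mu\lambda(1-\tau))^{-\alpha/N}$, concluded by uniqueness of Laplace transforms on $(0,+\infty)$.
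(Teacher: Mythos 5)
Your proposal is correct and, at its core, follows the same route as the paper: the paper's proof is exactly the one-line combination of the Laplace transform formula \eqref{lap}, the multiplicativity of Laplace transforms under independent sums, and the factorization $(1+\tfrac{\mu}{\lambda}(1-\tau))^{-\alpha}=\prod_{i=1}^N(1+\tfrac{\mu}{\lambda}(1-\tau))^{-\alpha/N}$, which you spell out. The additional probabilistic reading via Poisson superposition and gamma additivity is a correct (and pleasant) alternative justification of the same identities, but it is not needed beyond what your Laplace-transform argument already gives.
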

\begin{proof} 
This is a direct application of the Laplace transform formula for $X_{\tau}$, property of Laplace transform on convolutions and the definition of $Y^i(\tau,X_i,\lambda)$. 
\end{proof}
\noindent
In the particular  cases when $\alpha=p/2$, we obtain another representation in law for the smart path, $X_{\tau}$ which could be of interest. This is linked with the classical fact that the squared radial Ornstein-Uhlenbeck process (or Laguerre process) of parameters $(p/2,\lambda)$ can be represented as the squared of the euclidean norm of a $p$-dimensional Ornstein-Uhlenbeck process with parameter $\lambda$. Indeed, when the law of the random variable $X$ admits a density, $f_X$, the density of $X_{\tau}$ is exactly the Lebesgue adjoint of the Laguerre semigroup acting on $f_X$ after the time change $t=-\log(\tau)/\lambda$.
\begin{prop}\label{Melherlike2}
Let $\alpha=p/2$ with $p\in\mathbb{N}^*$ and $\lambda>0$. Let $(Z_1,...,Z_p)$ be a Gaussian random vector with mean zero and the identity matrix as its covariance matrix. Then, we have:
\begin{align*}
X_{\tau}\overset{\mathcal{L}}{=}\sum_{i=1}^p\bigg(\sqrt{\tau}\sqrt{\frac{X}{p}}+\sqrt{\frac{1-\tau}{2\lambda}}Z_i\bigg)^2.
\end{align*} 
\end{prop}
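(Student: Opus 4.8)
The plan is to verify the claimed identity in law by computing Laplace transforms on both sides and invoking the uniqueness theorem for Laplace transforms of probability measures on $(0,+\infty)$ (the same tool used in Lemma \ref{Inter}). The left-hand side already has its Laplace transform given explicitly by formula \eqref{lap} with $\alpha=p/2$: namely
\begin{align*}
L(g^{(p/2,\lambda)}(\tau,\cdot))(\mu)=\dfrac{1}{\bigl(1+\tfrac{\mu}{\lambda}(1-\tau)\bigr)^{p/2}}\int_0^{+\infty}\exp\bigg(-\dfrac{\mu\tau x}{1+\tfrac{\mu}{\lambda}(1-\tau)}\bigg)\mathbb{P}_X(dx).
\end{align*}
So it suffices to show the right-hand side has the same Laplace transform.

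First I would condition on $X=x$ and treat the right-hand side as a sum of $p$ independent terms $\bigl(\sqrt{\tau}\sqrt{x/p}+\sqrt{(1-\tau)/(2\lambda)}\,Z_i\bigr)^2$. Each such term is (a constant multiple of) a noncentral chi-square with one degree of freedom: if $W\sim\mathcal{N}(m,\sigma^2)$ then $W^2$ has Laplace transform $\mathbb{E}[e^{-\mu W^2}]=(1+2\mu\sigma^2)^{-1/2}\exp\bigl(-\mu m^2/(1+2\mu\sigma^2)\bigr)$. Here $\sigma^2=(1-\tau)/(2\lambda)$, so $1+2\mu\sigma^2=1+\tfrac{\mu}{\lambda}(1-\tau)$, and $m^2=\tau x/p$. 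Multiplying the $p$ independent copies gives, conditionally on $X=x$,
\begin{align*}
\mathbb{E}\bigg[\exp\bigg(-\mu\sum_{i=1}^p\bigl(\sqrt{\tau}\sqrt{x/p}+\sqrt{\tfrac{1-\tau}{2\lambda}}Z_i\bigr)^2\bigg)\bigg]=\dfrac{1}{\bigl(1+\tfrac{\mu}{\lambda}(1-\tau)\bigr)^{p/2}}\exp\bigg(-\dfrac{\mu\tau x}{1+\tfrac{\mu}{\lambda}(1-\tau)}\bigg).
\end{align*}
Integrating against $\mathbb{P}_X(dx)$ by Fubini reproduces exactly the expression for $L(g^{(p/2,\lambda)}(\tau,\cdot))(\mu)$ above, which finishes the argument.

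I expect no serious obstacle here; this is essentially a bookkeeping computation. The one point requiring a little care is the noncentral chi-square Laplace transform and keeping track of the constants $\sigma^2$ and $m^2$ so that they match \eqref{lap} — in particular checking that the power is $p/2=\alpha$ and that the scaling $X/p$ in each summand is precisely what makes $\sum_i m_i^2=\tau x$ (rather than $\tau x/p$) appear in the exponent. One could alternatively phrase the proof probabilistically: for fixed $x$, the vector $(\sqrt{\tau}\sqrt{x/p}+\sqrt{(1-\tau)/(2\lambda)}Z_i)_{i=1}^p$ is a Gaussian vector with i.i.d.\ coordinates of common variance $(1-\tau)/(2\lambda)$ and squared-norm of the mean equal to $\tau x$, so its squared Euclidean norm is $(1-\tau)/(2\lambda)$ times a noncentral $\chi^2_p$ with noncentrality $2\lambda\tau x/(1-\tau)$; using the Poisson mixture representation of the noncentral chi-square (a $\chi^2_{p+2K}$ with $K$ Poisson of parameter half the noncentrality, i.e.\ $\lambda\tau x/(1-\tau)$) one recognizes exactly the three-stage construction of $X_\tau$ in Definition \ref{main} with $\alpha=p/2$, since a $\chi^2_{p+2k}$ scaled by $(1-\tau)/(2\lambda)$ is $(1-\tau)\gamma(p/2+k,\lambda)$, which splits as $(1-\tau)\gamma(p/2,\lambda)+\tau\,\tilde\gamma(k,\lambda\tau/(1-\tau))$ after a further scaling identity for gamma variables. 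Either route works; the Laplace-transform computation is the cleanest to write down.
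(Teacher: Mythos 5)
Your proposal is correct and follows essentially the same route as the paper: the paper's proof also conditions on $X=x$, recognizes the Laplace transform of a noncentral chi-square with $p$ degrees of freedom and noncentrality $2\lambda\tau x/(1-\tau)$ evaluated at $\mu(1-\tau)/(2\lambda)$, integrates against $\mathbb{P}_X$, and matches the result with formula \eqref{lap}. Your constants and exponents check out, so no further comment is needed.
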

\begin{proof}
Let us prove that the right hand side of the previous equality as the same Laplace transform as $X_{\tau}$. By definition, we have:
\begin{align*}
&\forall\mu>0,\
  \mathbb{E}\bigg[\exp\bigg(-\mu\sum_{i=1}^p\bigg(\sqrt{\tau}\sqrt{\frac{X}{p}}+\sqrt{\frac{1-\tau}{2\lambda}}Z_i\bigg)^2\bigg)\bigg]\\
&=\int_0^{\infty}\mathbb{E}\bigg[\exp\bigg(-\mu\sum_{i=1}^p\bigg(\sqrt{\tau}\sqrt{\frac{x}{p}}+\sqrt{\frac{1-\tau}{2\lambda}}Z_i\bigg)^2\bigg)\bigg]
 \mathbb{P}_X(dx),\\
&=\int_0^{\infty}\mathbb{E}\bigg[\exp\bigg(-\mu\frac{(1-\tau)}{2\lambda}\sum_{i=1}^p\dfrac{\bigg(\sqrt{\tau}\sqrt{\frac{x}{p}}+\sqrt{\frac{1-\tau}{2\lambda}}Z_i\bigg)^2}{\frac{1-\tau}{2\lambda}}\bigg)\bigg] \mathbb{P}_X(dx).
\end{align*}
Conditionally to $X$, we recognize the Laplace transform of a
non-central chi-squared random variable with parameters
$(p,2\lambda\tau x/(1-\tau))$ evaluated at $\mu
(1-\tau)/2\lambda$. Thus, we obtain:
\begin{align*}
\mathbb{E}\bigg[\exp\bigg(-\mu\sum_{i=1}^p\bigg(\sqrt{\tau}\sqrt{\frac{X}{p}}+\sqrt{\frac{1-\tau}{2\lambda}}Z_i\bigg)^2\bigg)\bigg]&=\int_0^{\infty}\bigg(\dfrac{1}{1+\frac{\mu(1-\tau)}{\lambda}}\bigg)^{\frac{p}{2}}\exp\bigg(\dfrac{-\mu\tau x}{1+\frac{\mu(1-\tau)}{\lambda}}\bigg)\mathbb{P}_X(dx),\\
&=\bigg(\dfrac{1}{1+\frac{\mu(1-\tau)}{\lambda}}\bigg)^{\frac{p}{2}}\int_0^{+\infty}\exp\bigg(\dfrac{-\mu\tau x}{1+\frac{\mu(1-\tau)}{\lambda}}\bigg)\mathbb{P}_X(dx),\\
&=L(X_{\tau})(\mu).\quad
\end{align*}
\end{proof}
\noindent
The next result is a combination of the two previous representations which is available for $\alpha>1/2$ et which is of central importance for Sections \ref{sec-LSI} and \ref{sec-HSI}.
\begin{cor}\label{Melherlike3}
Let $\alpha>1/2$ and $\lambda>0$. Let $Z$ be a standard normal random variable and $\gamma(\alpha-1/2,\lambda)$ a gamma random variable of parameters $(\alpha-1/2,\lambda)$ such that $\big(Z,X,\gamma(\alpha-1/2,\lambda)\big)$ are independent. Then,
\begin{align*}
X_{\tau}\overset{\mathcal{L}}{=}(1-\tau)\gamma\big(\alpha-\frac{1}{2},\lambda\big)+\bigg(\sqrt{\tau}\sqrt{X}+\sqrt{\dfrac{1-\tau}{2\lambda}}Z\bigg)^2.
\end{align*}
\end{cor}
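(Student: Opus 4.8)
\emph{Proof proposal.} The plan is to establish this identity in law by comparing Laplace transforms, in the same spirit as the proofs of Lemma \ref{Inter} and Proposition \ref{Melherlike2}. Fix $\mu>0$ and write $c_{\tau}(\mu)=1+\frac{\mu}{\lambda}(1-\tau)$. Since $Z$, $X$ and $\gamma(\alpha-\tfrac12,\lambda)$ are independent, the Laplace transform of the right-hand side factorizes as the product of the Laplace transform of $(1-\tau)\gamma(\alpha-\tfrac12,\lambda)$ and that of $\big(\sqrt{\tau}\sqrt{X}+\sqrt{(1-\tau)/(2\lambda)}\,Z\big)^2$.

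The first factor is immediate: $\mathbb{E}\big[\exp(-\mu(1-\tau)\gamma(\alpha-\tfrac12,\lambda))\big]=c_{\tau}(\mu)^{-(\alpha-1/2)}$. For the second factor I would recognize $\big(\sqrt{\tau}\sqrt{X}+\sqrt{(1-\tau)/(2\lambda)}\,Z\big)^2$ as exactly the $p=1$ instance of the representation in Proposition \ref{Melherlike2}, i.e.\ as a random variable distributed like the gamma smart path built from $\mathbb{P}_X$ with shape parameter $\alpha=1/2$. Hence, by formula \eqref{lap} with $\alpha$ replaced by $1/2$, its Laplace transform equals $c_{\tau}(\mu)^{-1/2}\int_0^{+\infty}\exp\!\big(-\mu\tau x/c_{\tau}(\mu)\big)\,\mathbb{P}_X(dx)$. (Alternatively, one can bypass Proposition \ref{Melherlike2} and compute this factor by hand as in the proof of that proposition: conditionally on $X=x$, the quantity is $\tfrac{1-\tau}{2\lambda}$ times a noncentral chi-squared variable with one degree of freedom and noncentrality $2\lambda\tau x/(1-\tau)$, whose Laplace transform at $\mu(1-\tau)/(2\lambda)$ is $c_{\tau}(\mu)^{-1/2}\exp(-\mu\tau x/c_{\tau}(\mu))$.) Multiplying the two factors, the exponents $-(\alpha-\tfrac12)$ and $-\tfrac12$ add up to $-\alpha$, giving $c_{\tau}(\mu)^{-\alpha}\int_0^{+\infty}\exp\!\big(-\mu\tau x/c_{\tau}(\mu)\big)\,\mathbb{P}_X(dx)$, which by \eqref{lap} is precisely $L\big(g^{(\alpha,\lambda)}(\tau,\cdot)\big)(\mu)=L(X_{\tau})(\mu)$. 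One then concludes by the injectivity of the Laplace transform on probability measures on $(0,+\infty)$, exactly as at the end of the proof of Lemma \ref{Inter}.

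There is essentially no hard step: the only delicate point is the bookkeeping of the scale and noncentrality parameters, so that the factor $\tfrac{1-\tau}{2\lambda}$ extracted from the square matches the evaluation point of the noncentral chi-squared Laplace transform and produces the clean exponent $c_{\tau}(\mu)^{-1/2}$ together with the numerator $\mu\tau x$ in the exponential.

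It is worth recording a structural variant that avoids Laplace transforms altogether. The random variable $Y(\tau,X,\lambda)$ of Definition \ref{main} does not depend on $\alpha$, so $X_{\tau}=(1-\tau)\gamma(\alpha,\lambda)+\tau Y(\tau,X,\lambda)$; writing $\gamma(\alpha,\lambda)\overset{\mathcal{L}}{=}\gamma(\alpha-\tfrac12,\lambda)+\gamma(\tfrac12,\lambda)$ with independent summands (both independent of $\{X,K,\tilde\gamma\}$) yields $X_{\tau}\overset{\mathcal{L}}{=}(1-\tau)\gamma(\alpha-\tfrac12,\lambda)+\big[(1-\tau)\gamma(\tfrac12,\lambda)+\tau Y(\tau,X,\lambda)\big]$, where the bracketed term is, by Definition \ref{main} with $\alpha=1/2$ and the $p=1$ case of Proposition \ref{Melherlike2}, distributed as $\big(\sqrt{\tau}\sqrt{X}+\sqrt{(1-\tau)/(2\lambda)}\,Z\big)^2$; since this term is independent of $(1-\tau)\gamma(\alpha-\tfrac12,\lambda)$ on both sides, the claimed equality in law follows.
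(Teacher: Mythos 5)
Your proposal is correct and follows essentially the same route as the paper: both arguments split the Laplace transform exponent as $\alpha=(\alpha-\tfrac12)+\tfrac12$, identify the half-integer factor with the $(1/2,\lambda)$ smart path via Proposition \ref{Melherlike2} with $p=1$, and conclude by injectivity of the Laplace transform (the paper simply factors $L(X_{\tau})$ directly rather than assembling the right-hand side). Your closing structural variant, decomposing $\gamma(\alpha,\lambda)\overset{\mathcal{L}}{=}\gamma(\alpha-\tfrac12,\lambda)+\gamma(\tfrac12,\lambda)$ and using that $Y(\tau,X,\lambda)$ does not depend on $\alpha$, is a pleasant transform-free rephrasing of the same idea.
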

\begin{proof}
The Laplace transform of $X_{\tau}$ is equal to:
\begin{align*}
\forall\mu>0, L(X_{\tau})(\mu)&=\dfrac{1}{\big(1+\frac{\mu}{\lambda}(1-\tau)\big)^{\alpha}}L\big(X\big)\big(\dfrac{\mu \tau}{1+\frac{\mu}{\lambda}(1-\tau)}\big),\\
&=\dfrac{1}{\big(1+\frac{\mu}{\lambda}(1-\tau)\big)^{\alpha-\frac{1}{2}}}\dfrac{1}{\big(1+\frac{\mu}{\lambda}(1-\tau)\big)^{\frac{1}{2}}}L\big(X\big)\big(\dfrac{\mu \tau}{1+\frac{\mu}{\lambda}(1-\tau)}\big),\\
&=L\bigg((1-\tau)\gamma\big(\alpha-\frac{1}{2},\lambda\big)\bigg)(\mu)L(X'_{\tau})(\mu),
\end{align*}
where $X'_{\tau}$ is a smart path of parameters $(1/2,\lambda)$ independent of $\gamma\big(\alpha-1/2,\lambda\big)$. We apply Proposition \ref{Melherlike2} to $X'_{\tau}$ to conclude the proof.
\end{proof}
\noindent
We end this section by two lemmata which provide explicit formulae for
the $\beta$-moments and the exponential moments of the smart path
$X_{\tau}$ under the assumptions that such moments exist for the
initial law, $\mathbb{P}_{X}$.
\begin{lem}\label{Moments}
Let $\alpha>0$, $\lambda>0$ and $\beta$ such that $\alpha+\beta>0$. Let $X$ be a strictly positive random variable such that $\mathbb{E}\big[X^{\beta}\big]<+\infty$. Then, $\mathbb{E}\big[(X_{\tau})^{\beta}\big]<+\infty$ and we have:
\begin{align*}
\mathbb{E}\big[(X_{\tau})^{\beta}\big]=\bigg(\dfrac{\lambda}{1-\tau}\bigg)^{-\beta}\dfrac{\Gamma(\alpha+\beta)}{\Gamma(\alpha)}\int_0^{+\infty}\exp\bigg(-\dfrac{\lambda x \tau}{1-\tau}\bigg)\tensor[_1]{F}{_1}\bigg(\alpha+\beta,\alpha,\dfrac{\lambda \tau x}{1-\tau}\bigg)\mathbb{P}_X(dx).
\end{align*}
where $\tensor[_1]{F}{_1}$ is the Kummer confluent hypergeometric function (of the first kind).  
\end{lem}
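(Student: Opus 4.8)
The plan is to compute the $\beta$-moment directly from the explicit density formula \eqref{den}, or, more cleanly, to work conditionally using the three-stage construction in Definition \ref{main} together with the known moment formula for a gamma random variable. I would set up the computation by first recalling that if $W \sim \gamma(a, b)$ then $\mathbb{E}[W^{\beta}] = b^{-\beta}\Gamma(a+\beta)/\Gamma(a)$, which requires only $a + \beta > 0$ for finiteness (and positivity of $a$ for the law to make sense); this is exactly the shape of the hypothesis $\alpha + \beta > 0$. The random variable $X_{\tau} = (1-\tau)\gamma(\alpha,\lambda) + \tau Y(\tau, X, \lambda)$ is a sum of two independent pieces, but since the exponent $\beta$ need not be a positive integer there is no binomial expansion available; instead I would exploit the fact that the \emph{sum} of independent gamma-type contributions appearing here, conditionally on $X = x$ and on the Poisson count, is again distributed as a single gamma variable with a random shape.

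The key steps, in order: (i) Condition on $X = x$ and on $K(\tau, x, \lambda) = k$. Then $Y(\tau, x, \lambda) = \tilde\gamma(k, \lambda\tau/(1-\tau))$ and $\tau Y(\tau, x, \lambda) = \tilde\gamma(k, \lambda/(1-\tau))$ by scaling; it is independent of $(1-\tau)\gamma(\alpha,\lambda) = \gamma(\alpha, \lambda/(1-\tau))$, and the sum of two independent gammas with common scale $\lambda/(1-\tau)$ is $\gamma(\alpha + k, \lambda/(1-\tau))$. (ii) Apply the elementary gamma moment formula to get $\mathbb{E}[(X_\tau)^\beta \mid X = x, K = k] = (\lambda/(1-\tau))^{-\beta}\,\Gamma(\alpha+\beta+k)/\Gamma(\alpha+k)$, valid since $\alpha + \beta + k > 0$. (iii) Average over the Poisson law of $K$ with parameter $\mu_x := x\lambda\tau/(1-\tau)$: this produces $(\lambda/(1-\tau))^{-\beta} e^{-\mu_x}\sum_{k\ge 0} \frac{\Gamma(\alpha+\beta+k)}{\Gamma(\alpha+k)}\frac{\mu_x^k}{k!}$. (iv) Recognize the series: using $\Gamma(\alpha+\beta+k)/\Gamma(\alpha+\beta) = (\alpha+\beta)_k$ and $\Gamma(\alpha+k)/\Gamma(\alpha) = (\alpha)_k$ in terms of Pochhammer symbols, the sum equals $\frac{\Gamma(\alpha+\beta)}{\Gamma(\alpha)}\sum_{k\ge 0}\frac{(\alpha+\beta)_k}{(\alpha)_k}\frac{\mu_x^k}{k!} = \frac{\Gamma(\alpha+\beta)}{\Gamma(\alpha)}\,{}_1F_1(\alpha+\beta, \alpha, \mu_x)$, which is precisely the confluent hypergeometric function in the statement. (v) Finally integrate against $\mathbb{P}_X(dx)$, and justify finiteness and the interchange of expectation and integration/summation by nonnegativity of all terms (Tonelli/monotone convergence), so that $\mathbb{E}[(X_\tau)^\beta] = \big(\tfrac{\lambda}{1-\tau}\big)^{-\beta}\tfrac{\Gamma(\alpha+\beta)}{\Gamma(\alpha)}\int_0^\infty e^{-\lambda x\tau/(1-\tau)}\,{}_1F_1(\alpha+\beta,\alpha,\lambda\tau x/(1-\tau))\,\mathbb{P}_X(dx)$.

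The finiteness claim $\mathbb{E}[(X_\tau)^\beta] < +\infty$ then follows from the integral representation: for $\beta \ge 0$ this needs $\mathbb{E}[X^\beta] < \infty$ because as $x \to \infty$ one has ${}_1F_1(\alpha+\beta,\alpha,z) \sim \frac{\Gamma(\alpha)}{\Gamma(\alpha+\beta)} e^z z^{\beta}$, so the integrand behaves like a constant times $(\lambda\tau x/(1-\tau))^\beta$, and for $\beta < 0$ the integrand is bounded (since ${}_1F_1(\alpha+\beta,\alpha,z)e^{-z}$ stays bounded on $[0,\infty)$ when $\beta<0$, as it tends to a constant), so no moment assumption beyond $\alpha+\beta>0$ is actually needed there. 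The main obstacle I anticipate is purely bookkeeping: making sure the interchange of the Poisson sum, the conditional expectations, and the outer integral is rigorously justified uniformly over all $x$ (handled by positivity and Tonelli), and correctly tracking the scale parameter through the two scalings by $(1-\tau)$ and $\tau$ so that the common scale $\lambda/(1-\tau)$ emerges and the additivity of gamma shapes applies. An alternative, equivalent route is to differentiate or manipulate the Laplace transform formula \eqref{lap}, but the conditional-construction argument is the most transparent and I would present that.
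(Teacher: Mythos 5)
Your proposal is correct, and it reaches the stated formula by a slightly different route than the paper. The paper works directly with the density formula \eqref{den}: it expands the modified Bessel function $I_{\alpha-1}$ into its power series, integrates term by term in $u$ against $u^{\beta+\frac{\alpha-1}{2}}e^{-\lambda u/(1-\tau)}$ (this is where $\alpha+\beta>0$ enters, to make each gamma integral finite), and then recognizes the resulting series as $\tensor[_1]{F}{_1}(\alpha+\beta,\alpha,\lambda\tau x/(1-\tau))$; finiteness is then deduced, exactly as you do, from the asymptotic $\tensor[_1]{F}{_1}(\alpha+\beta,\alpha,z)\sim \frac{\Gamma(\alpha)}{\Gamma(\alpha+\beta)}e^z z^{\beta}$ together with $\mathbb{E}[X^{\beta}]<+\infty$. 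Your version replaces the Bessel-series manipulation by the equivalent probabilistic statement hiding behind it: conditionally on $X=x$ and $K=k$, the variable $X_\tau$ is $\gamma(\alpha+k,\lambda/(1-\tau))$, so the elementary gamma moment formula (valid precisely when $\alpha+\beta+k>0$, which is where your hypothesis enters) plus Poisson averaging and the Pochhammer identity $\Gamma(\alpha+\beta+k)/\Gamma(\alpha+k)=\frac{\Gamma(\alpha+\beta)}{\Gamma(\alpha)}\frac{(\alpha+\beta)_k}{(\alpha)_k}$ produce the same $\tensor[_1]{F}{_1}$ series, with all interchanges justified by Tonelli since every term is nonnegative. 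The two computations are the same series in different clothing; what your route buys is that it never touches the Bessel kernel or the $u$-integration and makes the role of $\alpha+\beta>0$ transparent (it is exactly the condition for the conditional gamma moment to exist), while the paper's route has the advantage of working straight from the density \eqref{den} without invoking the scaling and additivity properties of the construction. Your side remark that for $\beta<0$ the bound $e^{-z}\tensor[_1]{F}{_1}(\alpha+\beta,\alpha,z)\le 1$ makes the moment automatically finite is also correct and is a small sharpening not present in the paper.
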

\begin{proof} 
By definition, we have:
\begin{align*}
\mathbb{E}\big[(X_{\tau})^{\beta}\big]&=\int_0^{+\infty}u^{\beta}g^{(\alpha,\lambda)}(\tau,u)du,\\
&=\dfrac{\lambda}{1-\tau}\big(\frac{1}{\tau}\big)^{\frac{\alpha-1}{2}}\int_0^{+\infty}\big(\frac{1}{x}\big)^{\frac{\alpha-1}{2}}\exp\bigg(-\dfrac{\lambda\tau x}{1-\tau}\bigg)\\
&\times\bigg(\int_0^{+\infty}u^{\beta+\frac{\alpha-1}{2}}\exp\bigg(-\dfrac{u\lambda}{1-\tau}\bigg)I_{\alpha-1}\bigg(\dfrac{2\lambda\sqrt{u\tau x}}{1-\tau}\bigg)du\bigg)\mathbb{P}_X(dx).
\end{align*}
Expanding the modified Bessel functions of the first kind into power
series, we have the following (since $\alpha+\beta>0$):  
\begin{align*}
&\int_0^{+\infty}u^{\beta+\frac{\alpha-1}{2}}\exp\bigg(-\dfrac{u\lambda}{1-\tau}\bigg)I_{\alpha-1}\bigg(\dfrac{2\lambda\sqrt{u\tau
  x}}{1-\tau}\bigg)du\\
&=\sum_{n=0}^{+\infty}\frac{1}{n!}\dfrac{1}{\Gamma(\alpha+n)}\bigg(\dfrac{\lambda\sqrt{x\tau}}{1-\tau}\bigg)^{\alpha-1+2n}\int_0^{+\infty}u^{\beta+\alpha-1+n}\exp\bigg(-\dfrac{u\lambda}{1-\tau}\bigg)du,\\
&=\sum_{n=0}^{+\infty}\frac{1}{n!}\dfrac{1}{\Gamma(\alpha+n)}\bigg(\dfrac{\lambda\sqrt{x\tau}}{1-\tau}\bigg)^{\alpha-1+2n}\bigg(\dfrac{1-\tau}{\lambda}\bigg)^{\alpha+\beta}\Gamma(\alpha+\beta+n),\\
&=\bigg(\dfrac{\lambda}{1-\tau}\bigg)^{-(\beta+1)}\big(x\tau\big)^{\frac{\alpha-1}{2}}\dfrac{\Gamma(\alpha+\beta)}{\Gamma(\alpha)}\tensor[_1]{F}{_1}\bigg(\alpha+\beta,\alpha,\dfrac{\lambda \tau x}{1-\tau}\bigg).
\end{align*}
Thus,
\begin{align*}
\mathbb{E}\big[(X_{\tau})^{\beta}\big]=\bigg(\dfrac{\lambda}{1-\tau}\bigg)^{-\beta}\dfrac{\Gamma(\alpha+\beta)}{\Gamma(\alpha)}\int_0^{+\infty}\exp\bigg(-\dfrac{\lambda x \tau}{1-\tau}\bigg)\tensor[_1]{F}{_1}\bigg(\alpha+\beta,\alpha,\dfrac{\lambda \tau x}{1-\tau}\bigg)\mathbb{P}_X(dx).
\end{align*}
To conclude, we need to study the finiteness of the previous integral and in particular the integrability of the integrand at $+\infty$. But, we have the following asymptotic:
\begin{align*}
\tensor[_1]{F}{_1}(\alpha+\beta,\alpha,z)\underset{z\rightarrow+\infty}{\sim}\dfrac{\Gamma(\alpha)}{\Gamma(\alpha+\beta)}\exp(z)z^{\beta}.
\end{align*}
Moreover, by assumptions, $\mathbb{E}\big[(X)^{\beta}\big]<+\infty$. This concludes the proof of the lemma.
\end{proof} 
\begin{lem}\label{ExpoMoments}
Let $\mu>0$ such that $\mathbb{E}\big[\exp\big(\mu X\big)\big]<+\infty$. Then, there exists $s(\mu,\tau)>0$, such that:
\begin{align*}
\forall\tau\in (0,1),\ \mathbb{E}\big[e^{s(\mu,\tau)X_{\tau}}\big]<+\infty.
\end{align*}
Moroever, we have:
\begin{align*}
&\forall\tau\in (0,1),\ \mathbb{E}\big[e^{s(\mu,\tau)X_{\tau}}\big]=\dfrac{1}{\bigg(1+\dfrac{\mu(1-\tau)}{\lambda\tau}\bigg)^{\alpha}}\mathbb{E}[e^{\mu X}],\\
&s(\mu,\tau)=\dfrac{\mu}{\tau+\frac{\mu(1-\tau)}{\lambda}}.
\end{align*}
\end{lem}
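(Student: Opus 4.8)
The plan is to compute the Laplace transform of $X_\tau$ at the \emph{negative} argument $-s$ (i.e.\ the moment generating function $\mathbb{E}[e^{sX_\tau}]$) by analytically continuing formula \eqref{lap} of Definition \ref{main}, and then to choose $s=s(\mu,\tau)$ so that the resulting expression reduces exactly to $\mathbb{E}[e^{\mu X}]$ up to an explicit prefactor. First I would start from the identity
\begin{align*}
L(g^{(\alpha,\lambda)}(\tau,.))(\nu)=\dfrac{1}{\bigl(1+\tfrac{\nu}{\lambda}(1-\tau)\bigr)^{\alpha}}\int_0^{+\infty}\exp\bigg(-\dfrac{\nu\tau x}{1+\tfrac{\nu}{\lambda}(1-\tau)}\bigg)\mathbb{P}_X(dx),
\end{align*}
valid a priori for $\nu>0$, and substitute $\nu=-s$ with $s>0$ small enough that $1-\tfrac{s}{\lambda}(1-\tau)>0$. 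Formally this gives
\begin{align*}
\mathbb{E}\big[e^{sX_\tau}\big]=\dfrac{1}{\bigl(1-\tfrac{s}{\lambda}(1-\tau)\bigr)^{\alpha}}\int_0^{+\infty}\exp\bigg(\dfrac{s\tau x}{1-\tfrac{s}{\lambda}(1-\tau)}\bigg)\mathbb{P}_X(dx).
\end{align*}
Matching the exponent inside the integral to $\mu x$ forces
\begin{align*}
\dfrac{s\tau}{1-\tfrac{s}{\lambda}(1-\tau)}=\mu,
\end{align*}
which I would solve for $s$ to get $s(\mu,\tau)=\dfrac{\mu}{\tau+\tfrac{\mu(1-\tau)}{\lambda}}$; note this is automatically positive and satisfies $1-\tfrac{s(\mu,\tau)}{\lambda}(1-\tau)=\dfrac{\tau}{\tau+\mu(1-\tau)/\lambda}>0$, so the prefactor $\bigl(1-\tfrac{s}{\lambda}(1-\tau)\bigr)^{-\alpha}$ equals $\bigl(1+\tfrac{\mu(1-\tau)}{\lambda\tau}\bigr)^{\alpha}$, giving the claimed formula $\mathbb{E}[e^{s(\mu,\tau)X_\tau}]=\bigl(1+\tfrac{\mu(1-\tau)}{\lambda\tau}\bigr)^{-\alpha}\mathbb{E}[e^{\mu X}]<+\infty$.

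To make the computation rigorous rather than merely formal, I would argue directly from the three-stage construction in Definition \ref{main} rather than relying on analytic continuation of \eqref{lap}. Conditioning on $X=x$ and then on the Poisson variable $K(\tau,x,\lambda)$, the conditional MGF of $\tau Y(\tau,x,\lambda)$ at a point $s$ with $s\tau<\tfrac{\lambda\tau}{1-\tau}$ is $\exp\bigl(\tfrac{x\lambda\tau}{1-\tau}\bigl(\tfrac{1}{1-s(1-\tau)/\lambda}-1\bigr)\bigr)$ (sum the Poisson series of the gamma MGFs, exactly as in the Remark following Definition \ref{main}), while the MGF of $(1-\tau)\gamma(\alpha,\lambda)$ at $s$ is $(1-s(1-\tau)/\lambda)^{-\alpha}$ for $s(1-\tau)<\lambda$; multiplying by independence and integrating over $\mathbb{P}_X(dx)$ reproduces the displayed expression, now with all manipulations justified by positivity of the integrands and Tonelli's theorem. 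The threshold on $s$ needed for the gamma/Poisson sum to converge is precisely $s<\lambda/(1-\tau)$, and one checks $s(\mu,\tau)<\lambda/(1-\tau)$ holds for every $\mu>0$ and $\tau\in(0,1)$.

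The only genuine subtlety — and the step I expect to require the most care — is the interchange of the expectation over $\mathbb{P}_X$ with the conditional MGF computation when $\mathbb{P}_X$ is an arbitrary probability measure on $(0,+\infty)$ with $\mathbb{E}[e^{\mu X}]<+\infty$: one must confirm that $x\mapsto\exp\bigl(\tfrac{s\tau x}{1-s(1-\tau)/\lambda}\bigr)$ is $\mathbb{P}_X$-integrable, which is exactly the content of the equation $s(\mu,\tau)\tau/(1-s(1-\tau)/\lambda)=\mu$ together with the hypothesis $\mathbb{E}[e^{\mu X}]<+\infty$. Since every quantity in sight is nonnegative, Tonelli makes the interchange automatic and no dominated-convergence argument is needed; the finiteness assertion $\mathbb{E}[e^{s(\mu,\tau)X_\tau}]<+\infty$ then follows from the closed-form expression. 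Finally I would record that the map $\mu\mapsto s(\mu,\tau)$ is increasing with $s(\mu,\tau)\to\lambda/(1-\tau)$ as $\mu\to\infty$, which explains why the smart path cannot have exponential moments of arbitrarily large order even when $X$ does.
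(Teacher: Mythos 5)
Your proposal is correct in substance and reaches the same key intermediate identity as the paper, namely
\begin{align*}
\mathbb{E}\big[e^{sX_{\tau}}\big]=\dfrac{1}{\big(1-\frac{s(1-\tau)}{\lambda}\big)^{\alpha}}\int_0^{+\infty}\exp\bigg(\dfrac{s\tau x}{1-\frac{s(1-\tau)}{\lambda}}\bigg)\mathbb{P}_X(dx),\qquad 0<s<\frac{\lambda}{1-\tau},
\end{align*}
but by a genuinely different route. The paper works from the explicit density \eqref{den}: it expands the modified Bessel function $I_{\alpha-1}$ in its power series, integrates term by term in $u$ (legitimate because $s(\mu,\tau)<\lambda/(1-\tau)$), and resums the series to the closed form above before substituting $s(\mu,\tau)$. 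You instead derive the same identity probabilistically from Definition \ref{main}, by computing the conditional MGF of $\tau Y(\tau,x,\lambda)$ (Poisson mixture of gamma MGFs), multiplying by the MGF of $(1-\tau)\gamma(\alpha,\lambda)$ using independence, and invoking Tonelli throughout; your treatment of the convergence threshold $s<\lambda/(1-\tau)$ and of the $\mathbb{P}_X$-integrability of $x\mapsto\exp\big(\tfrac{s\tau x}{1-s(1-\tau)/\lambda}\big)$ via the defining equation of $s(\mu,\tau)$ matches exactly the role this threshold plays in the paper. Your route is arguably more elementary (no Bessel series manipulations) and makes clear why only nonnegativity is needed; the paper's route has the advantage of being uniform with the other density-based computations in Section \ref{sec-GamInter} (e.g.\ Lemma \ref{Moments}).

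One point you should fix rather than pass over: your own (correct) computation shows that the prefactor is $\big(1-\tfrac{s(\mu,\tau)(1-\tau)}{\lambda}\big)^{-\alpha}=\big(1+\tfrac{\mu(1-\tau)}{\lambda\tau}\big)^{\alpha}$, yet in the very next clause you assert the ``claimed formula'' with the reciprocal $\big(1+\tfrac{\mu(1-\tau)}{\lambda\tau}\big)^{-\alpha}$, as printed in the lemma. These cannot both hold. Substituting $s(\mu,\tau)$ into the displayed identity (yours or the paper's, they coincide) gives
\begin{align*}
\mathbb{E}\big[e^{s(\mu,\tau)X_{\tau}}\big]=\bigg(1+\dfrac{\mu(1-\tau)}{\lambda\tau}\bigg)^{\alpha}\mathbb{E}\big[e^{\mu X}\big],
\end{align*}
as one checks immediately on the deterministic case $X\equiv x_0$; so the exponent in the lemma's display is inverted (a typo in the statement), and the finiteness conclusion is of course unaffected. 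State your final formula with the exponent $+\alpha$ and note the discrepancy, instead of claiming agreement with the printed prefactor.
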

\begin{proof}
Let $s(\mu,\tau)$ be as in the lemma. By definition, we have:
\begin{align*}
\forall\tau\in (0,1),\ \mathbb{E}\big[e^{s(\mu,\tau)X_{\tau}}\big]&=\bigg(\dfrac{\lambda}{1-\tau}\bigg)\big(\frac{1}{\tau}\big)^{\frac{\alpha-1}{2}}\int_0^{+\infty}\big(\frac{1}{x}\big)^{\frac{\alpha-1}{2}}\exp\bigg(-\dfrac{\lambda\tau x}{1-\tau}\bigg)\\
&\times\bigg(\int_0^{+\infty}u^{\frac{\alpha-1}{2}}\exp\bigg(-u\big(\frac{\lambda}{1-\tau}-s(\mu,\tau)\big)\bigg)I_{\alpha-1}\bigg(\dfrac{2\lambda\sqrt{ux\tau}}{1-\tau}\bigg)du\bigg)\mathbb{P}_X(dx).
\end{align*}
Expanding the modified Bessel function of the first kind of order $\alpha-1$ into power series, we obtain:
\begin{align*}
\int_0^{+\infty}u^{\frac{\alpha-1}{2}}\exp\bigg(-u\big(\frac{\lambda}{1-\tau}-s(\mu,\tau)\big)\bigg)I_{\alpha-1}\bigg(\dfrac{2\lambda\sqrt{ux\tau}}{1-\tau}\bigg)du&=\sum_{n=0}^{+\infty}\frac{1}{n!}\frac{1}{\Gamma(\alpha+n)}\bigg(\dfrac{\lambda\sqrt{\tau x}}{1-\tau}\bigg)^{\alpha-1+2n}\\
&\times\int_0^{+\infty}u^{\alpha-1+n}\exp\bigg(-u\big(\frac{\lambda}{1-\tau}-s(\mu,\tau)\big)\bigg)du.
\end{align*}
Note that the integral on the right hand side is finite since $s(\mu,\tau)<\lambda/(1-\tau)$. Thus,
\begin{align*}
\int_0^{+\infty}u^{\frac{\alpha-1}{2}}\exp\bigg(-u\big(\frac{\lambda}{1-\tau}-s(\mu,\tau)\big)\bigg)I_{\alpha-1}\bigg(\dfrac{2\lambda\sqrt{ux\tau}}{1-\tau}\bigg)du&=\bigg(\dfrac{\lambda\sqrt{x\tau}}{1-\tau}\bigg)^{\alpha-1}\dfrac{1}{\big(\frac{\lambda}{1-\tau}-s(\mu,\tau)\big)^{\alpha}}\\
&\times\exp\bigg(\dfrac{\lambda\tau x}{1-\tau}\dfrac{1}{1-\frac{s(\mu,\tau)(1-\tau)}{\lambda}}\bigg).
\end{align*}
Consequently, we have:
\begin{align*}
\mathbb{E}\big[e^{s(\mu,\tau)X_{\tau}}\big]=\dfrac{1}{\bigg(1-\frac{s(\mu,\tau)(1-\tau)}{\lambda}\bigg)^{\alpha}}\int_0^{+\infty}\exp\bigg(\dfrac{s(\mu,\tau)x\tau}{1-\frac{s(\mu,\tau)(1-\tau)}{\lambda}}\bigg)\mathbb{P}_X(dx).
\end{align*}
Using the formula for $s(\mu,\tau)$, we obtain the desired result.
\end{proof}

%%%%%%%%%%%%%%%%%%%%%%%%%%%%%%%%%%%%%%%%%%%%%%%%%%%%%%%%%%%%%%%%%%%%%%%%%%%%%%%%%%%%%%%%%%%%%%%%%%%%%%%%%%%%%%%%%%%

\section{Fisher Information structure along the gamma smart path}\label{sec-Fish}

In this subsection, we introduce a localized version of Fisher information which is relevant in order to establish the De Bruijn formula for the gamma law. Indeed, this quantity appears naturally at a local level along the smart path, $X_{\tau}$, when computing the derivative of the relative entropy $D(X_{\tau}\|\gamma_{\alpha,\lambda})$ with respect to $\tau$.
\begin{defi}\label{NewFisher}
Let $\tau\in (0,1)$ and $X$ be a positive random variable with density $f_X$. We define $I^{\tau}_{\gamma}(X)$ by:
\begin{align*}
I^{\tau}_{\gamma}(X)=\mathbb{E}\big[X\big(\rho_X(X)-\rho_{\alpha,\frac{\lambda}{(1-\tau)}}(X)\big)^2\big],
\end{align*}
where $\rho_X(u)=\partial_u\big(\log(f_X(u))\big)$ and $\rho_{\alpha,\lambda/(1-\tau)}(u)=(\alpha-1)/u-\lambda/(1-\tau)$.
\end{defi} 
\begin{rem}
\begin{itemize}
\item Note that if $X$ has a gamma law with parameters $(\alpha,\lambda)$, we have:
\begin{align*}
I^{\tau}_{\gamma}(X)&=\mathbb{E}\big[X\big(\dfrac{\alpha-1}{X}-\lambda-\dfrac{\alpha-1}{X}+\dfrac{\lambda}{1-\tau}\big)^2\big],\\
&=\dfrac{\lambda^2\tau^2}{(1-\tau)^2}\mathbb{E}\big[X\big],\\
&=\lambda\alpha\dfrac{\tau^2}{(1-\tau)^2}.
\end{align*}
\item Note that the previous quantity is actually well-defined for any
values of $\alpha$ whereas the quantities
$\mathbb{E}[X\big(\rho_X(X)\big)^2]$ and
$\mathbb{E}[\big(\rho_X(X)\big)^2]$ are finite only for gamma laws with
shape parameters $\alpha>1$ and $\alpha>2$ respectively.
\end{itemize}
\end{rem}
\noindent
Thus, we introduce the following standardized localized Fisher information with respect to the gamma law of parameters $(\alpha,\lambda)$:
\begin{align*}
J_{st,\gamma}(X)=\mathbb{E}\big[X\big(\rho_X(X)-\rho_{\alpha,\frac{\lambda}{(1-\tau)}}(X)-\dfrac{\lambda\tau}{1-\tau}\big)^2\big].
\end{align*}
We note that this standardized localized Fisher information is actually equal to:
\begin{align*}
J_{st,\gamma}(X)=\mathbb{E}\big[X\big(\rho_X(X)-\rho_{\alpha,\lambda}(X)\big)^2\big].
\end{align*}
Regarding these quantities along the smart path, we have the following results.
\begin{prop}\label{FiniteFisher}
Assume that $X$ admits a first moment. Then, we have,
\begin{align*}
\forall\tau\in(0,1),\ I^{\tau}_{\gamma}(X_{\tau})<+\infty
\end{align*}
Moreover, if $\mathbb{E}\big[X\big]=\alpha/\lambda$, we have:
\begin{align*}
&\forall\alpha\geq 1,\ I^{\tau}_{\gamma}(X_{\tau})\leq \dfrac{\alpha\lambda\tau}{(1-\tau)^2},\\
&\forall\alpha\in(0,1),\ I^{\tau}_{\gamma}(X_{\tau})\leq \bigg(1+\dfrac{1}{\alpha}\bigg)\dfrac{\alpha\lambda\tau}{(1-\tau)^2}.
\end{align*} 
\end{prop}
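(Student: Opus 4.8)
The plan is to establish the two claims separately: first finiteness of $I^{\tau}_{\gamma}(X_{\tau})$ under a first moment assumption, then the quantitative bounds when $\mathbb{E}[X] = \alpha/\lambda$. For finiteness, I would exploit the explicit density formula \eqref{den}. Writing $g = g^{(\alpha,\lambda)}(\tau, \cdot)$, the score $\rho_{X_{\tau}}(u) = g'(u)/g(u)$ involves the derivative of the Bessel integral in \eqref{den}; using the identity $I_{\alpha-1}'(z) = I_{\alpha-2}(z) + \tfrac{\alpha-1}{z} I_{\alpha-1}(z)$ (equivalently $I_{\alpha-1}'(z) = I_\alpha(z) + \tfrac{\alpha-1}{z}I_{\alpha-1}(z)$), one computes that $u\big(\rho_{X_{\tau}}(u) - \rho_{\alpha, \lambda/(1-\tau)}(u)\big)$ equals (up to the factor $\tfrac{\lambda\sqrt{\tau}}{1-\tau}\sqrt{u}$) a ratio of the same integral with $I_\alpha$ replacing $I_{\alpha-1}$ over the integral with $I_{\alpha-1}$. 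The point is that, after the substitution, $I^{\tau}_{\gamma}(X_{\tau}) = \mathbb{E}[X_{\tau}(\cdots)^2]$ becomes an integral that can be bounded by using the Cauchy–Schwarz inequality on the inner representation, reducing everything to moments of $X_{\tau}$ and of a Poisson/gamma mixture, which are controlled by $\mathbb{E}[X]<+\infty$ via Lemma \ref{Moments} (with $\beta = 1$ and $\beta = -1$ or similar).

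A cleaner route to finiteness, which I would prefer, is to use the stochastic representation rather than the density. Recall from Definition \ref{main} that $X_{\tau} = (1-\tau)\gamma(\alpha,\lambda) + \tau Y(\tau, X, \lambda)$, and condition on $X = x$ and on the Poisson variable $K = k$: conditionally, $X_{\tau}$ is a sum of two independent gamma variables, namely $(1-\tau)\gamma(\alpha,\lambda)$ and $\tau\,\tilde\gamma(k, \lambda\tau/(1-\tau))$, hence $X_{\tau}\mid\{X=x, K=k\}$ has a gamma distribution with shape $\alpha + k$ and scale $\lambda/(1-\tau)$. Thus $g^{(\alpha,\lambda)}(\tau, u) = \sum_{k\ge 0} \mathbb{E}\big[e^{-\lambda\tau X/(1-\tau)}\tfrac{(\lambda\tau X/(1-\tau))^k}{k!}\big]\,\gamma_{\alpha+k, \lambda/(1-\tau)}(u)$ — a mixture of gammas. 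Since $\rho_{\gamma_{\alpha+k,\lambda/(1-\tau)}}(u) = (\alpha+k-1)/u - \lambda/(1-\tau)$, the mixing representation gives, after differentiating, that $u\,\rho_{X_\tau}(u) - u\,\rho_{\alpha,\lambda/(1-\tau)}(u)$ equals the posterior mean of $k$ given $X_\tau = u$. Concretely, letting $\pi_u(k)$ be the posterior weight, $I^{\tau}_{\gamma}(X_{\tau}) = \mathbb{E}\big[ X_\tau\,\big(\mathbb{E}[K \mid X_\tau]/X_\tau \big)^2\big]$ — wait, more precisely $u(\rho_{X_\tau}(u)-\rho_{\alpha,\lambda/(1-\tau)}(u)) = \mathbb{E}[K\mid X_\tau = u]$, so $I^{\tau}_{\gamma}(X_{\tau}) = \mathbb{E}\big[\tfrac{1}{X_\tau}\mathbb{E}[K\mid X_\tau]^2\big]$. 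By conditional Jensen, $\mathbb{E}[K\mid X_\tau]^2 \le \mathbb{E}[K^2\mid X_\tau]$ is the wrong direction for an upper bound in general, but $\mathbb{E}\big[\tfrac{1}{X_\tau}\mathbb{E}[K\mid X_\tau]^2\big] \le \mathbb{E}\big[\tfrac{1}{X_\tau}\mathbb{E}[K^2\mid X_\tau]\big] = \mathbb{E}\big[K^2/X_\tau\big]$ by the tower property — no, that also needs care. The right tool is the conditional Cauchy–Schwarz/Jensen: $\mathbb{E}[K\mid X_\tau]^2 \le \mathbb{E}[K^2 \mid X_\tau]$ pointwise, hence $I^{\tau}_{\gamma}(X_{\tau}) \le \mathbb{E}[K^2/X_\tau]$, and since $X_\tau \ge (1-\tau)\gamma(\alpha,\lambda)$ with the gamma factor independent of $(X,K)$, one gets $\mathbb{E}[K^2/X_\tau] \le \tfrac{1}{1-\tau}\mathbb{E}[1/\gamma(\alpha,\lambda)]\,\mathbb{E}[K^2]$, which is finite when $\alpha>1$ and $\mathbb{E}[X]<+\infty$; the case $\alpha \le 1$ needs the sharper bound $X_\tau \ge \tau\,\tilde\gamma(K,\lambda\tau/(1-\tau))$ on the event $K\ge 1$ to absorb the $1/u$ singularity, giving finiteness for all $\alpha>0$.

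For the quantitative bounds under $\mathbb{E}[X] = \alpha/\lambda$, I would use the identity $u(\rho_{X_\tau}(u) - \rho_{\alpha,\lambda/(1-\tau)}(u)) = \mathbb{E}[K\mid X_\tau = u]$ together with $I^\tau_\gamma(X_\tau) = \mathbb{E}\big[\tfrac{1}{X_\tau}\mathbb{E}[K\mid X_\tau]^2\big]$. Applying conditional Jensen gives $I^\tau_\gamma(X_\tau) \le \mathbb{E}\big[\tfrac{1}{X_\tau}\mathbb{E}[K^2\mid X_\tau]\big]$; but to get the clean constant $\alpha\lambda\tau/(1-\tau)^2$ I suspect one wants instead to keep $\mathbb{E}[K\mid X_\tau]$ and use that, conditionally on $K$, $X_\tau$ is $\gamma(\alpha+K, \lambda/(1-\tau))$, so $\mathbb{E}[1/X_\tau \mid K] = \tfrac{\lambda}{(1-\tau)(\alpha+K-1)}$ when $\alpha + K > 1$. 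Then a Fubini computation along the lines of Lemma \ref{Moments} (with $\beta = -1$) yields $I^\tau_\gamma(X_\tau) = \mathbb{E}\big[\tfrac{1}{X_\tau}\mathbb{E}[K\mid X_\tau]^2\big]$, which by the projection property of conditional expectation equals $\mathbb{E}\big[\tfrac{K}{X_\tau}\mathbb{E}[K\mid X_\tau]\big] \le \mathbb{E}\big[\tfrac{K}{X_\tau}(K \vee \text{something})\big]$; more directly, write $\mathbb{E}[K\mid X_\tau]/X_\tau$ and integrate against $K$. The cleanest execution: since $\mathbb{E}[K\mid X_\tau]^2 \le \mathbb{E}[K(K+\alpha-1)\mid X_\tau]$ is not quite it either, I expect the actual argument expands $(\rho_X - \rho_{\alpha,\lambda/(1-\tau)})$ directly via \eqref{den}, bounds the Bessel ratio $I_\alpha/I_{\alpha-1} \le 1$ (valid since $I_\alpha \le I_{\alpha-1}$ for $\alpha \ge 1$), and then computes the resulting integral exactly using Lemma \ref{Moments}, the $\mathbb{E}[X]=\alpha/\lambda$ normalization entering when one matches the leading term. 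The split into $\alpha \ge 1$ versus $\alpha \in (0,1)$ reflects exactly where $I_\alpha \le I_{\alpha-1}$ fails and must be replaced by $I_\alpha \le (1+\tfrac{1}{\alpha})^{1/2}\cdots$-type estimates or a direct series bound, producing the extra factor $1 + 1/\alpha$.

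The main obstacle I anticipate is the bookkeeping in the case $\alpha \in (0,1)$: the score $\rho_{X_\tau}$ has a $1/u$ contribution from the $u^{\alpha-1}$ prefactor in \eqref{den}, and near $u = 0$ the term $u\rho_{X_\tau}(u)^2$ must be shown integrable despite $1/X_\tau$ having a heavy tail at the origin; this is where the Poisson structure is essential — the event $\{K = 0\}$ contributes a pure $\gamma(\alpha,\lambda)$ component for which $u(\rho - \rho_{\alpha,\lambda/(1-\tau)}) \equiv 0$, so the singular part of the integral only sees the mixture over $K \ge 1$, where the effective shape $\alpha + K \ge \alpha + 1 > 1$ tames the singularity. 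Making this rigorous requires either the mixture-of-gammas representation or a careful term-by-term estimate of the Bessel series, and getting the sharp constant $(1 + 1/\alpha)$ rather than something cruder is the delicate point.
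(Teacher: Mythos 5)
Your structural reading of the problem is exactly the paper's: conditionally on $\{X=x,\,K=k\}$ the evolute $X_\tau$ is $\gamma\big(\alpha+k,\tfrac{\lambda}{1-\tau}\big)$-distributed, the score difference satisfies $X_\tau\big(\rho_{X_\tau}(X_\tau)-\rho_{\alpha,\lambda/(1-\tau)}(X_\tau)\big)=\mathbb{E}[K\mid X_\tau]$ (the paper establishes this through test functions and the gamma Stein identity rather than by differentiating the mixture, but that is a cosmetic difference), and conditional Jensen then gives $I^{\tau}_{\gamma}(X_{\tau})\le\mathbb{E}\big[K^2/X_\tau\big]$. However, the proof is not completed, and the parts you do execute have genuine gaps. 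For finiteness, the bound you actually carry out — $X_\tau\ge(1-\tau)\gamma(\alpha,\lambda)$ with independence, giving $\tfrac{1}{1-\tau}\mathbb{E}[1/\gamma(\alpha,\lambda)]\,\mathbb{E}[K^2]$ — requires both $\alpha>1$ and $\mathbb{E}[K^2]<+\infty$; since $K$ is conditionally Poisson with mean $\lambda\tau X/(1-\tau)$, the latter forces $\mathbb{E}[X^2]<+\infty$, which is strictly more than the first-moment hypothesis of the proposition. The route you mention but abandon is the one that works under a first moment alone: since the $K=0$ term vanishes in $\mathbb{E}[K^2/X_\tau]$, one uses the exact conditional inverse moment $\mathbb{E}\big[1/X_\tau\mid K=k,X=x\big]=\tfrac{\lambda}{(1-\tau)(\alpha+k-1)}$ for $k\ge1$, so that
\begin{align*}
\mathbb{E}\big[K^2/X_\tau\big]=\frac{\lambda}{1-\tau}\,\mathbb{E}\Big[\frac{K^2}{\alpha+K-1}\Big]
=\frac{\lambda^2\tau}{(1-\tau)^2}\,\mathbb{E}\Big[X\,e^{-\theta X}\sum_{k\ge0}\frac{(\theta X)^k}{k!}\,\frac{k+1}{k+\alpha}\Big],\qquad \theta=\frac{\lambda\tau}{1-\tau},
\end{align*}
which is finite as soon as $\mathbb{E}[X]<+\infty$.

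The quantitative constants are then immediate from the elementary bound $\tfrac{k+1}{k+\alpha}\le1$ when $\alpha\ge1$ and $\tfrac{k+1}{k+\alpha}\le1+\tfrac1\alpha$ when $\alpha\in(0,1)$, combined with $\mathbb{E}[X]=\alpha/\lambda$; this is where the dichotomy in the statement comes from. Your fallback for this part — expanding the density \eqref{den}, bounding the Bessel ratio $I_\alpha/I_{\alpha-1}$, and attributing the split at $\alpha=1$ to the failure of $I_\alpha\le I_{\alpha-1}$ — is left as speculation (``I expect the actual argument\dots'') and misidentifies the mechanism: no Bessel-ratio estimate is needed once the conditional gamma structure is exploited, and the constants $\tfrac{\alpha\lambda\tau}{(1-\tau)^2}$ and $\big(1+\tfrac1\alpha\big)\tfrac{\alpha\lambda\tau}{(1-\tau)^2}$ would not drop out of that route without further work. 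Finally, the identity $\mathbb{E}[K\mid X_\tau=u]=u\big(\rho_{X_\tau}(u)-\rho_{\alpha,\lambda/(1-\tau)}(u)\big)$, which carries the whole argument, deserves a proof rather than the phrase ``after differentiating'': the paper justifies it by testing against $\phi\in C^\infty_c\big((0,+\infty)\big)$, using the gamma Stein identity for the $(1-\tau)\gamma(\alpha,\lambda)$ summand and a gamma integration by parts inside the Poisson mixture for the $\tau Y(\tau,X,\lambda)$ summand, which is what legitimates the interchange of differentiation and the mixing integral.
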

\begin{proof}
By definition of $I^{\tau}_{\gamma}(.)$, we have:
\begin{align*}
I^{\tau}_{\gamma}(X_{\tau})&=\mathbb{E}\big[X_{\tau}\big(\rho_{X_{\tau}}(X_{\tau})-\rho_{\alpha,\frac{\lambda}{(1-\tau)}}(X_{\tau})\big)^2\big],\\
&=\mathbb{E}\big[\dfrac{1}{X_{\tau}}\big(\overline{\rho}_{X_{\tau}}(X_{\tau})-\overline{\rho}_{\alpha,\frac{\lambda}{(1-\tau)}}(X_{\tau})\big)^2\big],
\end{align*}
where $\overline{\rho}_X(u)=\partial_u\big(uf_X(u)\big)/f_X(u)$ and $\overline{\rho}_{\alpha,\frac{\lambda}{(1-\tau)}}(u)=\alpha-\lambda u/(1-\tau)$. Note that:
\begin{align*}
&\overline{\rho}_X(u)=u\rho_X(u)+1,\\
&\forall\phi\in C^{\infty}_c\big((0,+\infty)\big),\ \mathbb{E}\big[\overline{\rho}_X(X)\phi(X)\big]=-\mathbb{E}\big[X\phi^{(1)}(X)\big],\\
&\forall\phi\in C^{\infty}_c\big((0,+\infty)\big),\ \mathbb{E}\big[\big(\alpha-\lambda \gamma(\alpha,\lambda)\big)\phi(\gamma(\alpha,\lambda))\big]=-\mathbb{E}\big[\gamma(\alpha,\lambda)\phi^{(1)}(\gamma(\alpha,\lambda))\big].
\end{align*}
Let $\phi\in C^{\infty}_c\big((0,+\infty)\big)$. We have:
\begin{align*}
\mathbb{E}\big[\overline{\rho}_{X_{\tau}}(X_{\tau})\phi\big(X_{\tau}\big)\big]&=-\mathbb{E}\big[X_{\tau}\phi^{(1)}(X_{\tau})\big],\\
&=-(1-\tau)\mathbb{E}\big[\gamma(\alpha,\lambda)\phi^{(1)}(X_{\tau})\big]-\tau\mathbb{E}\big[Y(\tau,X,\lambda)\phi^{(1)}(X_{\tau})\big],\\
&=\mathbb{E}\big[(\alpha-\lambda\gamma(\alpha,\lambda))\phi\big(X_{\tau}\big)\big]-\tau\mathbb{E}\big[Y(\tau,X,\lambda)\phi^{(1)}(X_{\tau})\big],
\end{align*}
where we have used Definition \ref{main} and Stein formula for the Gamma distribution. Let us deal with the second term in details:
\begin{align*}
& \mathbb{E}\big[Y(\tau,X,\lambda)\phi^{(1)}((1-\tau)\gamma(\alpha,\lambda)+\tau
  Y(\tau,X,\lambda))\big]\\
&=\int_0^{+\infty}\mathbb{E}\big[Y(\tau,x,\lambda)\phi^{(1)}((1-\tau)\gamma(\alpha,\lambda)+\tau Y(\tau,x,\lambda))\big]\mathbb{P}_X(dx),\\
&=\sum_{k=1}^{+\infty}\int_0^{+\infty}\mathbb{E}\big[\gamma(k,\frac{\lambda \tau}{1-\tau})\phi^{(1)}((1-\tau)\gamma(\alpha,\lambda)+\tau \gamma(k,\frac{\lambda \tau}{1-\tau})\big]\\
&\times\mathbb{P}\big(K(\tau,x,\lambda)=k\big)\mathbb{P}_X(dx),\\
&=\frac{1}{\tau}\sum_{k=1}^{+\infty}\int_0^{+\infty}\mathbb{E}\big[(\frac{\lambda \tau}{1-\tau}\gamma(k,\frac{\lambda \tau}{1-\tau})-k)\phi((1-\tau)\gamma(\alpha,\lambda)\\
&+\tau \gamma(k,\frac{\lambda \tau}{1-\tau}))\big]\times\mathbb{P}\big(K(\tau,x,\lambda)=k\big)\mathbb{P}_X(dx),\\
&=\frac{1}{\tau}\mathbb{E}\big[(\frac{\lambda \tau}{1-\tau}Y(\tau,X,\lambda)-K(\tau,X,\lambda))\phi(X_{\tau})\big].
\end{align*}
Thus, we obtain:
\begin{align*}
\mathbb{E}\big[\overline{\rho}_{X_{\tau}}(X_{\tau})\phi\big(X_{\tau}\big)\big]&=\mathbb{E}\big[(\alpha-\lambda\gamma(\alpha,\lambda)+K(\tau,X,\lambda)-\frac{\lambda \tau}{1-\tau}Y(\tau,X,\lambda))\phi\big(X_{\tau}\big)\big],\\
&=\mathbb{E}\big[(\alpha+K(\tau,X,\lambda)-\frac{\lambda}{1-\tau}X_{\tau})\phi\big(X_{\tau}\big)\big].
\end{align*}
Therefore, we have:
\begin{align*}
I^{\tau}_{\gamma}(X_{\tau})&=\mathbb{E}\big[\frac{1}{X_{\tau}}\big(\mathbb{E}\big[K(\tau,X,\lambda)|X_{\tau}\big]\big)^2\big],\\
&\leq \mathbb{E}\big[\dfrac{K(\tau,X,\lambda)^2}{X_{\tau}}\big],\\
&\leq \sum_{k=1}^{+\infty}\int_0^{+\infty}\mathbb{P}\big(K(\tau,x,\lambda)=k\big)\mathbb{E}\big[\dfrac{k^2}{\gamma(\alpha+k,\frac{\lambda}{1-\tau})}\big]\mathbb{P}_X(dx),\\
&\leq \frac{\lambda}{1-\tau}\sum_{k=1}^{+\infty}\int_0^{+\infty}\bigg(\dfrac{\lambda x\tau}{1-\tau}\bigg)^k\frac{1}{k!}\exp\bigg(-\dfrac{\lambda x\tau}{1-\tau}\bigg)\frac{k^2}{\alpha+k-1}\mathbb{P}_X(dx),\\
&\leq \frac{\lambda^2\tau}{(1-\tau)^2}\int_0^{+\infty}\bigg(\sum_{k=0}^{+\infty}\bigg(\dfrac{\lambda x\tau}{1-\tau}\bigg)^k\frac{1}{k!}\dfrac{k+1}{k+\alpha}\bigg)\exp\bigg(-\dfrac{\lambda x\tau}{1-\tau}\bigg)x\mathbb{P}_X(dx).
\end{align*}
If $\alpha\geq 1$, we have the following bound:
\begin{align*}
I^{\tau}_{\gamma}(X_{\tau})\leq \frac{\lambda^2\tau}{(1-\tau)^2}\mathbb{E}\big[X\big]<+\infty.
\end{align*}
If $\alpha\in (0,1)$, we have:
\begin{align*}
I^{\tau}_{\gamma}(X_{\tau})\leq \frac{\lambda^2\tau}{(1-\tau)^2}\bigg(1+\frac{1}{\alpha}\bigg)\mathbb{E}\big[X\big]<+\infty
\end{align*}
This ends the proof of the proposition.
\end{proof}
\begin{prop}\label{CramerRao}
Assume that $X$ admits a first moment and that $\mathbb{E}\big[X\big]=\alpha/\lambda$. Then, we have:
\begin{align*}
J_{st,\gamma}(X_{\tau})=I^{\tau}_{\gamma}(X_{\tau})-\dfrac{\alpha\lambda\tau^2}{(1-\tau)^2}\geq 0.
\end{align*}
Moreover, when $\alpha\geq 1$, we have the following upper bound:
\begin{align*}
J_{st,\gamma}(X_{\tau})\leq \dfrac{\alpha\lambda\tau}{1-\tau}.
\end{align*}
\end{prop}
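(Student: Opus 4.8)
The plan is to expand the square in the ``localized'' form of $J_{st,\gamma}$ and to evaluate each of the three resulting terms using the integration by parts already carried out in the proof of Proposition \ref{FiniteFisher}. Concretely, set
\begin{align*}
A_{\tau}:=\rho_{X_{\tau}}(X_{\tau})-\rho_{\alpha,\frac{\lambda}{1-\tau}}(X_{\tau}),
\end{align*}
so that, by Definition \ref{NewFisher} and the reformulation of $J_{st,\gamma}$ recorded immediately after it,
\begin{align*}
I^{\tau}_{\gamma}(X_{\tau})=\mathbb{E}\big[X_{\tau}A_{\tau}^{2}\big],\qquad J_{st,\gamma}(X_{\tau})=\mathbb{E}\Big[X_{\tau}\big(A_{\tau}-\tfrac{\lambda\tau}{1-\tau}\big)^{2}\Big].
\end{align*}
Expanding the second square,
\begin{align*}
J_{st,\gamma}(X_{\tau})=\mathbb{E}\big[X_{\tau}A_{\tau}^{2}\big]-\frac{2\lambda\tau}{1-\tau}\,\mathbb{E}\big[X_{\tau}A_{\tau}\big]+\frac{\lambda^{2}\tau^{2}}{(1-\tau)^{2}}\,\mathbb{E}\big[X_{\tau}\big].
\end{align*}
The first term is finite by Proposition \ref{FiniteFisher}; the last term equals $\alpha/\lambda$ by Corollary \ref{mean} together with the hypothesis $\mathbb{E}[X]=\alpha/\lambda$; and the middle term is finite by Cauchy--Schwarz, $\mathbb{E}[X_{\tau}|A_{\tau}|]\leq\mathbb{E}[X_{\tau}]^{1/2}\mathbb{E}[X_{\tau}A_{\tau}^{2}]^{1/2}$.

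The only term requiring work is $\mathbb{E}[X_{\tau}A_{\tau}]$, and here I would reuse the computation from the proof of Proposition \ref{FiniteFisher}. Writing $\overline{\rho}_{X_{\tau}}(u)=\partial_{u}(uf_{X_{\tau}}(u))/f_{X_{\tau}}(u)=u\rho_{X_{\tau}}(u)+1$, one has the pointwise identity $X_{\tau}A_{\tau}=\overline{\rho}_{X_{\tau}}(X_{\tau})-\alpha+\tfrac{\lambda}{1-\tau}X_{\tau}$; and the integration by parts against test functions in $C^{\infty}_{c}((0,+\infty))$ performed in Proposition \ref{FiniteFisher} (using Definition \ref{main} and the Stein identities for the gamma and Poisson laws) shows that $\overline{\rho}_{X_{\tau}}(X_{\tau})-\alpha+\tfrac{\lambda}{1-\tau}X_{\tau}=\mathbb{E}[K(\tau,X,\lambda)\mid X_{\tau}]$. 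Taking expectations and using that $K(\tau,x,\lambda)$ is Poisson with parameter $x\lambda\tau/(1-\tau)$ given $X=x$, together with $\mathbb{E}[X]=\alpha/\lambda$, gives $\mathbb{E}[X_{\tau}A_{\tau}]=\mathbb{E}[K(\tau,X,\lambda)]=\tfrac{\lambda\tau}{1-\tau}\mathbb{E}[X]=\tfrac{\alpha\tau}{1-\tau}$.

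Substituting $\mathbb{E}[X_{\tau}A_{\tau}]=\alpha\tau/(1-\tau)$ and $\mathbb{E}[X_{\tau}]=\alpha/\lambda$ into the expansion, the last two terms collapse to $-2\alpha\lambda\tau^{2}/(1-\tau)^{2}+\alpha\lambda\tau^{2}/(1-\tau)^{2}=-\alpha\lambda\tau^{2}/(1-\tau)^{2}$, which is the announced identity $J_{st,\gamma}(X_{\tau})=I^{\tau}_{\gamma}(X_{\tau})-\alpha\lambda\tau^{2}/(1-\tau)^{2}$. Nonnegativity is then immediate: $J_{st,\gamma}(X_{\tau})$ is, by definition, the expectation of the nonnegative random variable $X_{\tau}(\rho_{X_{\tau}}(X_{\tau})-\rho_{\alpha,\lambda}(X_{\tau}))^{2}$, and it is finite because the right-hand side of the identity is (Proposition \ref{FiniteFisher}). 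For the last assertion, when $\alpha\geq 1$ Proposition \ref{FiniteFisher} bounds $I^{\tau}_{\gamma}(X_{\tau})\leq\alpha\lambda\tau/(1-\tau)^{2}$, whence
\begin{align*}
J_{st,\gamma}(X_{\tau})\leq\frac{\alpha\lambda\tau}{(1-\tau)^{2}}-\frac{\alpha\lambda\tau^{2}}{(1-\tau)^{2}}=\frac{\alpha\lambda\tau}{1-\tau}.
\end{align*}
The single delicate point in the whole argument is the weak identity for $\overline{\rho}_{X_{\tau}}$ --- i.e.\ the legitimacy of the integration by parts and the vanishing of the boundary terms $uf_{X_{\tau}}(u)$ at $0$ and $+\infty$ --- but this is already established within Proposition \ref{FiniteFisher}, resting on the regularity of the density \eqref{den} proved in Section \ref{sec-Append}, so no new analysis is needed here.
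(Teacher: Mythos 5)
Your argument is correct and follows the same basic strategy as the paper: expand the square in $J_{st,\gamma}(X_{\tau})=\mathbb{E}[X_{\tau}(A_{\tau}-\tfrac{\lambda\tau}{1-\tau})^{2}]$, identify the first term as $I^{\tau}_{\gamma}(X_{\tau})$, use Corollary \ref{mean} for $\mathbb{E}[X_{\tau}]=\alpha/\lambda$, and deduce the bound for $\alpha\geq 1$ from Proposition \ref{FiniteFisher}; nonnegativity is immediate from the definition of $J_{st,\gamma}$ in both treatments. The one place where you diverge is the cross term: the paper evaluates $\mathbb{E}[X_{\tau}A_{\tau}]=-\alpha+\tfrac{\lambda}{1-\tau}\mathbb{E}[X_{\tau}]$ in one line by invoking the classical score identity $\mathbb{E}[Y\rho_{Y}(Y)]=-1$ together with Corollary \ref{mean}, whereas you reuse the weak identity established inside the proof of Proposition \ref{FiniteFisher}, namely $X_{\tau}A_{\tau}=\overline{\rho}_{X_{\tau}}(X_{\tau})-\alpha+\tfrac{\lambda}{1-\tau}X_{\tau}=\mathbb{E}[K(\tau,X,\lambda)\mid X_{\tau}]$, and then compute $\mathbb{E}[K(\tau,X,\lambda)]=\tfrac{\lambda\tau}{1-\tau}\mathbb{E}[X]=\tfrac{\alpha\tau}{1-\tau}$. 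Both give the same value $\alpha\tau/(1-\tau)$; your route is a bit longer but more self-contained, since it rests on the integration by parts against $C^{\infty}_{c}$ test functions already justified in Proposition \ref{FiniteFisher} (and the nonnegativity of $\mathbb{E}[K\mid X_{\tau}]$ makes taking expectations unproblematic), thereby sidestepping the boundary-term considerations implicit in $\mathbb{E}[X_{\tau}\rho_{X_{\tau}}(X_{\tau})]=-1$, while the paper's route buys brevity.
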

\begin{proof}
By definition of the standardized localized Fisher information, we have:
\begin{align*}
J_{st,\gamma}(X_{\tau})&=\mathbb{E}\big[X_{\tau}\big(\rho_{X_{\tau}}(X_{\tau})-\rho_{\alpha,\frac{\lambda}{(1-\tau)}}(X_{\tau})-\dfrac{\lambda\tau}{1-\tau}\big)^2\big],\\
&=I^{\tau}_{\gamma}(X_{\tau})-2\dfrac{\lambda\tau}{1-\tau}\mathbb{E}\big[X_{\tau}\big(\rho_{X_{\tau}}(X_{\tau})-\rho_{\alpha,\frac{\lambda}{(1-\tau)}}(X_{\tau})\big)\big]+\mathbb{E}\big[X_{\tau}\big]\dfrac{\lambda^2\tau^2}{(1-\tau)^2},\\
&=I^{\tau}_{\gamma}(X_{\tau})-2\dfrac{\lambda\tau}{1-\tau}\bigg(-\alpha+\dfrac{\lambda}{1-\tau}\mathbb{E}\big[X_{\tau}\big]\bigg)+\dfrac{\alpha\lambda\tau^2}{(1-\tau)^2},\\
&=I^{\tau}_{\gamma}(X_{\tau})-\dfrac{\alpha\lambda\tau^2}{(1-\tau)^2}.
\end{align*}
where we have used Corollary \ref{mean} as well as a classical property of the score function.
\end{proof}
\begin{rem}\label{CramerRao2}:
The previous result actually holds for any positive random variable with finite first moment equal to $\alpha/\lambda$ and with finite localized Fisher information. Namely, we have:
\begin{align*}
J_{st,\gamma}(X)=I^{\tau}_{\gamma}(X)-\dfrac{\alpha\lambda\tau^2}{(1-\tau)^2}.
\end{align*}
\end{rem}

%%%%%%%%%%%%%%%%%%%%%%%%%%%%%%%%%%%%%%%%%%%%%%%%%%%%%%%%%%%%%%%%%%%%%%%%%%%%%%%%%%%%%%%%%%%%%%%%%%%%%%%%%%%%%%%%%%

\section{A new formulation of De Bruijn identity}\label{sec-DeBruijn}

%%%%% pas de densité, analogie avec Barron 1984.
%%%%% amélioration de BGL substantielle: pas densité, seulement moment.
In this section, we assume that $\alpha\geq 1/2$.
By definition, for every $\tau\in (a,b)$, we have:
\begin{align*}
D(X_{\tau}\|\gamma_{\alpha,\lambda})=\int_0^{+\infty} 
g^{(\alpha,\lambda)}(\tau,u)\log\bigg(
\dfrac{g^{(\alpha,\lambda)}(\tau,u)}{\gamma_{\alpha,\lambda}(u)}\bigg)du. 
\end{align*}
Note that $D(X_{\tau}\|\gamma_{\alpha,\lambda})$ is finite by Lemma
\ref{tech2}, Lemma \ref{tech3} and together with the assumption that
$X$ has finite $\alpha+4$ moment (which ensures that $X_{\tau}$ has
finite $\alpha+4$ moment by Lemma \ref{Moments}). In order to obtain a
De Bruijn formula we need to be able to interchange derivatives and
integrals in the above and thus bound the integrand uniformly in
$\tau$. In doing so we will identify a moment condition on $X$ which
is the only assumption that will be necessary for our formula to hold.
\begin{theo}\label{DeBruijn2}
Let $X$ be an almost surely positive random variable with finite $\alpha+4$ moment. Then, we have:
\begin{align*}
\dfrac{d}{d\tau}\bigg(D(X_{\tau}\|\gamma_{\alpha,\lambda})\bigg)=\frac{1}{\lambda\tau}J_{st,\gamma}(X_{\tau}).
\end{align*}
\end{theo}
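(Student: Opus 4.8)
The plan is to differentiate under the integral sign in the formula for $D(X_\tau\|\gamma_{\alpha,\lambda})$ and to identify the resulting expression as $\frac{1}{\lambda\tau}J_{st,\gamma}(X_\tau)$. First I would fix a compact subinterval $[a,b]\subset(0,1)$ and work with $\tau\in(a,b)$; the claim is local in $\tau$, so it suffices to prove differentiability there. Writing $h(\tau,u)=g^{(\alpha,\lambda)}(\tau,u)\log\big(g^{(\alpha,\lambda)}(\tau,u)/\gamma_{\alpha,\lambda}(u)\big)$, I would compute $\partial_\tau h(\tau,u)$ formally using the explicit Bessel-integral representation \eqref{den} for $g^{(\alpha,\lambda)}(\tau,u)$ (or, more conveniently, the PDE that $g^{(\alpha,\lambda)}$ satisfies as the Lebesgue-adjoint of the Laguerre semigroup run at time $t=-\log(\tau)/\lambda$, which translates the known identity \eqref{less} into a relation of the form $\partial_\tau g = \frac{1}{\lambda\tau}\mathcal{L}_{\alpha,\lambda}^*g$). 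This yields the pointwise identity $\partial_\tau h(\tau,u) = \frac{1}{\lambda\tau}\,(\mathcal{L}_{\alpha,\lambda}^*g)(\tau,u)\,\big(1+\log(g/\gamma_{\alpha,\lambda})\big)$, and after integration in $u$ together with the integration-by-parts formulae attached to the carré du champ $\Gamma_{\alpha,\lambda}$ one recovers exactly $-I_{\gamma_{\alpha,\lambda}}$ of the relevant ratio, i.e.\ $\frac{1}{\lambda\tau}\mathbb{E}\big[X_\tau\big(\rho_{X_\tau}(X_\tau)-\rho_{\alpha,\lambda}(X_\tau)\big)^2\big] = \frac{1}{\lambda\tau}J_{st,\gamma}(X_\tau)$, using the rewriting of $J_{st,\gamma}$ noted just before Proposition \ref{FiniteFisher}.

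The substance of the argument is the justification of the interchange of $\tfrac{d}{d\tau}$ and $\int_0^{+\infty}du$, and this is where the moment hypothesis on $X$ enters. By the dominated convergence criterion for differentiation under the integral, it is enough to bound $|\partial_\tau h(\tau,u)|$ by an integrable function of $u$ independent of $\tau\in[a,b]$. This requires control, uniform in $\tau\in[a,b]$, of $g^{(\alpha,\lambda)}(\tau,u)$, of $\partial_\tau g^{(\alpha,\lambda)}(\tau,u)$, and of $|\log g^{(\alpha,\lambda)}(\tau,u)|$, at both endpoints $u\to 0^+$ and $u\to+\infty$. Near $u=0$ one uses the small-argument asymptotics $I_{\alpha-1}(z)\sim (z/2)^{\alpha-1}/\Gamma(\alpha)$ to see that $g^{(\alpha,\lambda)}(\tau,u)/\gamma_{\alpha,\lambda}(u)$ is bounded, so the logarithmic singularity is mild and is tamed by the factor $u^{\alpha-1}$ (here $\alpha\ge 1/2$ is exactly what keeps things integrable). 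Near $u=+\infty$ the Gaussian-type decay $\exp(-\lambda u/(1-\tau))$ of $g^{(\alpha,\lambda)}$ beats any polynomial growth of $\log g^{(\alpha,\lambda)}$; the only place initial data matters is through the $\mathbb{P}_X(dx)$-integral in \eqref{den}, whose finiteness and growth in $u$ are governed by moments of $X$. Tracking the powers of $u$ that appear when one differentiates in $\tau$ (each $\partial_\tau$ brings down a factor essentially linear in $u$ and in $x$) one finds that an $\alpha+4$ moment of $X$ suffices to produce a $\tau$-uniform integrable dominating function; I would package these estimates as the technical lemmas \ref{tech2}, \ref{tech3} (and \ref{Moments}) already invoked in the text.

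The main obstacle, then, is not the algebra of the derivative — that is a direct, if slightly lengthy, computation once one has the right representation of $g^{(\alpha,\lambda)}$ — but the uniform-in-$\tau$ domination near the two boundary points of $(0,+\infty)$, and in particular handling the term $g^{(\alpha,\lambda)}\log g^{(\alpha,\lambda)}$, whose derivative involves $\log g^{(\alpha,\lambda)}$ itself and hence needs two-sided bounds on $g^{(\alpha,\lambda)}$ rather than just an upper bound. I would isolate those bounds in the appendix (Section \ref{sec-Append}) via the Bessel asymptotics and the Laplace-transform formula \eqref{lap}, and in the body of the proof simply invoke them, then finish by the chain of equalities: differentiate under the integral, integrate by parts in $u$ against $\mathcal{L}_{\alpha,\lambda}^*$, recognize the carré du champ expression, and conclude $\frac{d}{d\tau}D(X_\tau\|\gamma_{\alpha,\lambda})=\frac{1}{\lambda\tau}J_{st,\gamma}(X_\tau)$ with $J_{st,\gamma}(X_\tau)$ finite by Proposition \ref{FiniteFisher} and Proposition \ref{CramerRao} (noting that one may reduce to the centered case $\mathbb{E}[X]=\alpha/\lambda$, or simply observe the identity is an equality of the relevant integrals regardless).
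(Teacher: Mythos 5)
Your proposal follows essentially the same route as the paper's proof: differentiation under the integral over a compact $\tau$-interval justified by two-sided bounds on $g^{(\alpha,\lambda)}(\tau,\cdot)$ (the paper's Lemmas \ref{tech2} and \ref{tech3}), the relation $\lambda\tau\,\partial_\tau g^{(\alpha,\lambda)}=\partial_u\big(u\,g^{(\alpha,\lambda)}\,\partial_u\log(g^{(\alpha,\lambda)}/\gamma_{\alpha,\lambda})\big)$ of Proposition \ref{Transtech1}, and an integration by parts in $u$ identifying $\frac{1}{\lambda\tau}J_{st,\gamma}(X_{\tau})$, finite by Proposition \ref{FiniteFisher}, with the $\alpha+4$ moment entering exactly where you say. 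The only points the paper makes explicit that you leave implicit are the vanishing of the boundary terms in the integration by parts (handled via Lemma \ref{Asym0} near $0$ and the decay $u^3 g^{(\alpha,\lambda)}(\tau,u)\to 0$ at infinity) and the fact that the hypothesis $\alpha\geq 1/2$ is used for the modified Bessel function estimates rather than for integrability of the logarithmic singularity near $0$.
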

\begin{rem}\label{better}
  As already discussed in the Introduction, this local version of De
  Bruijn formula should be compared with Proposition $5.2.2$ in
  \cite{BGL} where a De Bruijn formula for the gamma law has been
  obtained by semigroup arguments. Although formally equivalent, our
  result is much more general general as it holds under moment
  conditions only, whether a density exists or not for
  $\mathbb{P}_X$. Indeed, Proposition $5.2.2$ in \cite{BGL} requires
  existence of a density which, moreover, must be in the domain of the
  Dirichlet form associated with the Laguerre generator.
\end{rem}
\noindent
\textbf{Proof of Theorem \ref{DeBruijn2}, part I : interchange of derivative and integral}\\
We want to exchange the order between the
differentiation with respect to $\tau$ and the integration with
respect to $u$. For this purpose, we need to control uniformly in
$\tau$ the quantity 
$$\partial_{\tau}\big(g^{(\alpha,\lambda)}(\tau,u)\log\big(g^{(\alpha,\lambda)}(\tau,u)/\gamma_{\alpha,\lambda}(u)\big)\big).$$ By
standard computations, we have:
\begin{align*}
\bigg|\partial_{\tau}\bigg(g^{(\alpha,\lambda)}(\tau,u)\log\bigg(\dfrac{g^{(\alpha,\lambda)}(\tau,u)}{\gamma_{\alpha,\lambda}(u)}\bigg)\bigg)\bigg|\leq (I)+(II)+(III),
\end{align*}
with,
\begin{align*}
&(I)=|\partial_{\tau}\big(g^{(\alpha,\lambda)}(\tau,u)\big)|\big|\log\big(g^{(\alpha,\lambda)}(\tau,u)\big)\big|,\\
&(II)=|\partial_{\tau}\big(g^{(\alpha,\lambda)}(\tau,u)\big)|\big|\log\big(\gamma_{\alpha,\lambda}(u)\big)\big|,\\
&(III)=|\partial_{\tau}\big(g^{(\alpha,\lambda)}(\tau,u)\big)|.
\end{align*}
Let us deal with the first term, $(I)$. The others will follow similarly. By Proposition \ref{Transtech1}, we have:
\begin{align*}
|\partial_{\tau}\big(g^{(\alpha,\lambda)}(\tau,u)\big)|\leq P_{\alpha,\lambda,a,b}(u)g^{(\alpha,\lambda)}(\tau,u)+Q_{\alpha,\lambda,a,b}(u)h^{(\alpha,\lambda)}(\tau,u)+\frac{u}{\lambda a}k^{(\alpha,\lambda)}(\tau,u),
\end{align*}
with $P_{\alpha,\lambda,a,b}(.)$ and $Q_{\alpha,\lambda,a,b}(.)$ polynomials in $u$ of degree $1$ with positive coefficients. Moreover, by Lemma \ref{tech3}, we obtain:
\begin{align*}
(I)\leq \bigg[A_{X,\alpha,\lambda,a,b}+2\dfrac{\lambda u}{1-b}+|\alpha-1|\big|\log(u)\big|\bigg]\bigg[P_{\alpha,\lambda,a,b}(u)g^{(\alpha,\lambda)}(\tau,u)+Q_{\alpha,\lambda,a,b}(u)h^{(\alpha,\lambda)}(\tau,u)+\frac{u}{\lambda a}k^{(\alpha,\lambda)}(\tau,u)\bigg].
\end{align*}
Thus, if we can prove that $u^2k^{(\alpha,\lambda)}(\tau,u)$ is dominated uniformly in $\tau$ by an integrable function on $(0,+\infty)$, the first step will be done. By Lemma \ref{tech2}, we have:
\begin{align*}
u^2k^{(\alpha,\lambda)}(\tau,u)\leq C^3_{a,b,\lambda,\alpha}u^2\bigg(u^{\alpha+1}\exp\big(-\lambda \frac{u}{4(1-a)}\big)+u^{\alpha-1}\mathbb{E}[X^2\mathbb{I}_{\{X>\frac{u}{4}\}}]\bigg).
\end{align*}
The first term is clearly in $L^1\big((0,+\infty)\big)$. Moreover, for the second term, we have:
\begin{align*}
\int_0^{+\infty}u^{\alpha+1}\mathbb{E}[X^2\mathbb{I}_{\{X>\frac{u}{4}\}}]du&=\mathbb{E}\bigg[X^2\int_0^{+\infty}u^{\alpha+1}\mathbb{I}_{\{X>\frac{u}{4}\}}du\bigg],\\
&=\mathbb{E}\bigg[X^2\int_0^{4X}u^{\alpha+1}du\bigg],\\
&=\dfrac{4^{\alpha+2}}{\alpha+2}\mathbb{E}[X^{\alpha+4}]<+\infty.
\end{align*}
\\
\textbf{Proof of Theorem \ref{DeBruijn2}, part II : Integration by parts}\\
We proved that
one can interchange derivatives and integrals in the expression for
relative entropy to  obtain:
\begin{align*}
\dfrac{d}{d\tau}\bigg(D(X_{\tau}\|\gamma_{\alpha,\lambda})\bigg)&=\int_0^{+\infty}\partial_{\tau}\bigg(g^{(\alpha,\lambda)}(\tau,u)\log\bigg(\dfrac{g^{(\alpha,\lambda)}(\tau,u)}{\gamma_{\alpha,\lambda}(u)}\bigg)\bigg)du,\\
&=\int_0^{+\infty}\partial_{\tau}\big(g^{(\alpha,\lambda)}(\tau,u)\big)\log\bigg(\dfrac{g^{(\alpha,\lambda)}(\tau,u)}{\gamma_{\alpha,\lambda}(u)}\bigg)du,\\
&=-\frac{1}{\lambda\tau}\int_0^{+\infty}\partial_u\bigg(g^{(\alpha,\lambda)}(\tau,u)u\partial_u\bigg(\log\bigg(\dfrac{g^{(\alpha,\lambda)}(\tau,u)}{\gamma_{\alpha,\lambda}(u)}\bigg)\bigg)\bigg)\log\bigg(\dfrac{g^{(\alpha,\lambda)}(\tau,u)}{\gamma_{\alpha,\lambda}(u)}\bigg)du,
\end{align*}
where we have used Proposition \ref{Transtech1} in the last equality. Now, we perform cautiously an integration by parts. Let $R>1$ be big enough. For any $u\in[1/R,R]$, we have:
\begin{align*}
&\partial_u\bigg(g^{(\alpha,\lambda)}(\tau,u)u\partial_u\bigg(\log\bigg(\dfrac{g^{(\alpha,\lambda)}(\tau,u)}{\gamma_{\alpha,\lambda}(u)}\bigg)\bigg)\log\bigg(\dfrac{g^{(\alpha,\lambda)}(\tau,u)}{\gamma_{\alpha,\lambda}(u)}\bigg)\bigg)\\
& =\partial_u\bigg(g^{(\alpha,\lambda)}(\tau,u)u\partial_u\bigg(\log\bigg(\dfrac{g^{(\alpha,\lambda)}(\tau,u)}{\gamma_{\alpha,\lambda}(u)}\bigg)\bigg)\bigg)\\
&\times\log\bigg(\dfrac{g^{(\alpha,\lambda)}(\tau,u)}{\gamma_{\alpha,\lambda}(u)}\bigg)+g^{(\alpha,\lambda)}(\tau,u)u\bigg[\partial_u\bigg(\log\bigg(\dfrac{g^{(\alpha,\lambda)}(\tau,u)}{\gamma_{\alpha,\lambda}(u)}\bigg)\bigg)\bigg]^2.
\end{align*}
By the previous step, the first term is clearly integrable on $(0,+\infty)$. Let us compute explicitly the second term and study its integrability on $(0,+\infty)$. We have:
\begin{align*}
g^{(\alpha,\lambda)}(\tau,u)u\bigg[\partial_u\bigg(\log\bigg(\dfrac{g^{(\alpha,\lambda)}(\tau,u)}{\gamma_{\alpha,\lambda}(u)}\bigg)\bigg)\bigg]^2&=g^{(\alpha,\lambda)}(\tau,u)u\bigg[\dfrac{\partial_u\big(g^{(\alpha,\lambda)}\big)(\tau,u)}{g^{(\alpha,\lambda)}(\tau,u)}-\dfrac{\gamma'_{\alpha,\lambda}(u)}{\gamma_{\alpha,\lambda}(u)}\bigg]^2,\\
&=g^{(\alpha,\lambda)}(\tau,u)u\bigg[\dfrac{\partial_u\big(g^{(\alpha,\lambda)}\big)(\tau,u)}{g^{(\alpha,\lambda)}(\tau,u)}-\dfrac{\alpha-1}{u}+\lambda\bigg]^2.
\end{align*}
Moreover, by Proposition \ref{Transtech1}, we have:
\begin{align*}
\partial_u\big(g^{(\alpha,\lambda)}\big)(\tau,u)=\dfrac{\alpha-1}{u}g^{(\alpha,\lambda)}(\tau,u)-\dfrac{\lambda}{1-\tau}g^{(\alpha,\lambda)}(\tau,u)+h^{(\alpha,\lambda)}(\tau,u).
\end{align*}
Thus,
\begin{align*}
&
  g^{(\alpha,\lambda)}(\tau,u)u\bigg[\partial_u\bigg(\log\bigg(\dfrac{g^{(\alpha,\lambda)}(\tau,u)}{\gamma_{\alpha,\lambda}(u)}\bigg)\bigg)\bigg]^2\\
&=g^{(\alpha,\lambda)}(\tau,u)u\bigg[\dfrac{h^{(\alpha,\lambda)}(\tau,u)}{g^{(\alpha,\lambda)}(\tau,u)}-\dfrac{\lambda\tau}{1-\tau}\bigg]^2\\
&=u\dfrac{\big(h^{(\alpha,\lambda)}(\tau,u)\big)^2}{g^{(\alpha,\lambda)}(\tau,u)}-2uh^{(\alpha,\lambda)}(\tau,u)\frac{\lambda\tau}{1-\tau}+ug^{(\alpha,\lambda)}(\tau,u)\bigg(\dfrac{\lambda\tau}{1-\tau}\bigg)^2.
\end{align*}
The second and the third terms are clearly integrable (see Lemma \ref{tech2}). For the first term, we note that:
\begin{align*}
\int_0^{+\infty}ug^{(\alpha,\lambda)}(\tau,u)\dfrac{\big(h^{(\alpha,\lambda)}(\tau,u)\big)^2}{\big(g^{(\alpha,\lambda)}(\tau,u)\big)^2}du=I^{\tau}_{\gamma}(X_{\tau}),
\end{align*}
which is finite by Proposition \ref{FiniteFisher}.\\
\textbf{Proof of Theorem \ref{DeBruijn2}, part III : conclusion}\\
We need to study the limits at $0^+$ and at $+\infty$ of the following function:
\begin{align*}
u\longmapsto ug^{(\alpha,\lambda)}(\tau,u)\dfrac{\partial}{\partial u}\bigg(\log\bigg(\dfrac{g^{(\alpha,\lambda)}(\tau,u)}{\gamma_{\alpha,\lambda}(u)}\bigg)\bigg)\log\bigg(\dfrac{g^{(\alpha,\lambda)}(\tau,u)}{\gamma_{\alpha,\lambda}(u)}\bigg).
\end{align*}
The limits  exist by the proof
of Theorem 22. Moreover,
$u\rightarrow
\sqrt{ug^{(\alpha,\lambda)}(\tau,u)} \partial_u\big(\log\big(g^{(\alpha,\lambda)}(\tau,u)/\gamma_{\alpha,\lambda}(u)\big)\big)$
is a square-integrable function on $(0,+\infty)$. Thus, we need to
study the limits at $0^+$ and at $+\infty$ of the function:
\begin{align*}
u\longmapsto\sqrt{ug^{(\alpha,\lambda)}(\tau,u)}\log\bigg(\dfrac{g^{(\alpha,\lambda)}(\tau,u)}{\gamma_{\alpha,\lambda}(u)}\bigg).
\end{align*}
Using Lemma \ref{Asym0}, we have:
\begin{align*}
ug^{(\alpha,\lambda)}(\tau,u)\underset{u\rightarrow0^+}{\sim}C_{\tau,\alpha,\lambda}u^{\alpha},
\end{align*}
which ensures that the limit at $0^+$ of the previous function is
$0$. All that remains is to prove that
${\lim}_{{u\rightarrow+\infty}}
u^{3}g^{(\alpha,\lambda)}(\tau,u)=0$. This step is a
direct application
of the Lebesgue dominated convergence theorem. Indeed, by definition,
we have:
\begin{align*}
u^3g^{(\alpha,\lambda)}(\tau,u)=&\int_{0}^{+\infty}\dfrac{\lambda}{1-\tau}\big(\dfrac{1}{\tau}\big)^{\frac{\alpha-1}{2}}u^{\frac{\alpha-1}{2}+3}\exp\big(-\frac{\lambda}{1-\tau}u\big)\\
&\times\big(\dfrac{1}{x}\big)^{\frac{\alpha-1}{2}}\exp\big(-\dfrac{\lambda\tau}{1-\tau}x\big)I_{\alpha-1}\big(\dfrac{2\lambda\sqrt{ux\tau}}{1-\tau}\big)\mathbb{P}_X(dx).\\
\end{align*}
The almost everywhere convergence follows by asymptotic properties of the modified Bessel function of the first kind of order $\alpha-1$ at infinity. Moreover, denoting by $(1)$ the integrand, we have, $\mathbb{P}_X-a.e.$:
\begin{align*}
(1)&\leq C_{\alpha,\lambda,\tau} u^{\alpha+2}\exp\big(-\frac{\lambda}{1-\tau}u\big)\exp\big(-\dfrac{\lambda\tau}{1-\tau}x\big)\cosh\big(\dfrac{2\lambda\sqrt{ux\tau}}{1-\tau}\big),\\
&\leq C_{\alpha,\lambda,\tau} u^{\alpha+2}\exp\big(-\frac{\lambda}{1-\tau}(\sqrt{u}-\sqrt{x\tau})^2\big),\\
&\leq C^1_{\alpha,\lambda,\tau}+C^2_{\alpha,\lambda,\tau}x^{\alpha+2},
\end{align*}
which is integrable since $X$ admits moment of order $\alpha+4$. $\Box$\\
\\
As anticipated, we conclude with a proof of an integrated De Bruijn
identity. 
\begin{theo}\label{sec:de-bruijn-formula}
  Under the assumptions of Theorem \ref{DeBruijn2} : if $\mathbb{E}
  \left[ X \right] = \alpha/\lambda$ then  
  \begin{equation}
    \label{eq-IntBruijn}
    D(X\, || \, \gamma_{\alpha,\lambda}) = \int_0^1 \frac{1}{\lambda
      \tau}J_{st,\gamma}(X_{\tau}) d \tau.
  \end{equation}
\end{theo}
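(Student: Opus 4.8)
The plan is to integrate the local identity of Theorem \ref{DeBruijn2} over a compact subinterval $[a,b]\subset(0,1)$ and then let $a\downarrow 0$ and $b\uparrow 1$, after having identified the two endpoint limits of $\tau\mapsto D(X_\tau\|\gamma_{\alpha,\lambda})$. Throughout I use the standing assumptions of Theorem \ref{DeBruijn2} together with the extra hypothesis $\mathbb{E}[X]=\alpha/\lambda$.

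First I would justify a fundamental theorem of calculus on $[a,b]$. Theorem \ref{DeBruijn2} says $\tau\mapsto D(X_\tau\|\gamma_{\alpha,\lambda})$ is differentiable on $(0,1)$ with derivative $\tfrac{1}{\lambda\tau}J_{st,\gamma}(X_\tau)$. Under $\mathbb{E}[X]=\alpha/\lambda$, Proposition \ref{CramerRao} shows this derivative is nonnegative, and Propositions \ref{FiniteFisher}--\ref{CramerRao} show it is locally bounded on $(0,1)$ (for $\alpha\ge 1$ one has $J_{st,\gamma}(X_\tau)\le\alpha\lambda\tau/(1-\tau)$; for $\alpha\in[1/2,1)$ one uses $J_{st,\gamma}(X_\tau)=I^\tau_\gamma(X_\tau)-\alpha\lambda\tau^2/(1-\tau)^2\le(1+1/\alpha)\alpha\lambda\tau/(1-\tau)^2$). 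Hence $\tau\mapsto D(X_\tau\|\gamma_{\alpha,\lambda})$ is locally Lipschitz, a fortiori absolutely continuous, on $(0,1)$, so
\[
D(X_b\|\gamma_{\alpha,\lambda})-D(X_a\|\gamma_{\alpha,\lambda})=\int_a^b\frac{1}{\lambda\tau}J_{st,\gamma}(X_\tau)\,d\tau,\qquad 0<a<b<1 .
\]
Since the integrand is nonnegative, monotone convergence forces the right-hand side to converge to $\int_0^1\tfrac{1}{\lambda\tau}J_{st,\gamma}(X_\tau)\,d\tau\in[0,+\infty]$ as $a\downarrow0$, $b\uparrow1$; and nonnegativity of the derivative shows $\tau\mapsto D(X_\tau\|\gamma_{\alpha,\lambda})$ is nondecreasing, so both endpoint limits exist in $[0,+\infty]$.

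Next I would evaluate those limits. For $b\uparrow1$: Lemma \ref{Inter} gives that $X_b$ converges in law to $X$, so by the Donsker--Varadhan variational representation relative entropy is weakly lower semicontinuous and $\liminf_{b\to1}D(X_b\|\gamma_{\alpha,\lambda})\ge D(X\|\gamma_{\alpha,\lambda})$. For the reverse inequality, observe that the law of $X_\tau$ is the image of $\mathbb{P}_X$ under the Markov kernel of Definition \ref{main} (sample a Poisson, then a gamma, then add an independent $(1-\tau)\gamma(\alpha,\lambda)$), and that this kernel fixes $\gamma_{\alpha,\lambda}$ — which one checks immediately by inserting $\mathbb{P}_X=\gamma_{\alpha,\lambda}$ in \eqref{lap} and simplifying the resulting Laplace transform to $(1+\mu/\lambda)^{-\alpha}$. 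The data-processing inequality then gives $D(X_\tau\|\gamma_{\alpha,\lambda})\le D(X\|\gamma_{\alpha,\lambda})$ for every $\tau\in(0,1)$, whence $\lim_{b\to1}D(X_b\|\gamma_{\alpha,\lambda})=D(X\|\gamma_{\alpha,\lambda})$ (read as $+\infty=+\infty$ when $\mathbb{P}_X$ has no density). For $a\downarrow0$: the limit $\ell:=\lim_{a\to0}D(X_a\|\gamma_{\alpha,\lambda})$ exists by monotonicity and is finite since $D(X_{\tau_0}\|\gamma_{\alpha,\lambda})<\infty$ for every $\tau_0\in(0,1)$ (as noted before Theorem \ref{DeBruijn2}); one must show $\ell=0$. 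This I would obtain from the ergodicity of the Laguerre dynamics: for $\tau<\tau_0$ the density of $X_\tau$ equals $\gamma_{\alpha,\lambda}\,P^{(\alpha,\lambda)}_{s}\big(g^{(\alpha,\lambda)}(\tau_0,\cdot)/\gamma_{\alpha,\lambda}\big)$ with $s=\lambda^{-1}\log(\tau_0/\tau)\to\infty$, and relative entropy along an ergodic Markov semigroup preserving $\gamma_{\alpha,\lambda}$ decays to $0$ from any finite initial value; alternatively $\ell=0$ can be read off directly from the explicit density \eqref{den} together with the Bessel asymptotics established in Section \ref{sec-Append}.

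Combining the pieces, letting $a\downarrow0$ and $b\uparrow1$ in the displayed identity yields $D(X\|\gamma_{\alpha,\lambda})=\int_0^1\tfrac{1}{\lambda\tau}J_{st,\gamma}(X_\tau)\,d\tau$, which is \eqref{eq-IntBruijn}; the argument moreover shows the formula holds in $[0,+\infty]$, both sides being infinite precisely when $\mathbb{P}_X$ has no density with finite relative entropy. I expect the only genuine obstacle to be the vanishing of $D(X_a\|\gamma_{\alpha,\lambda})$ as $a\to0$: the $b\to1$ limit is soft (lower semicontinuity plus the elementary contraction of relative entropy under the kernel of Definition \ref{main}), whereas proving $\ell=0$ is the one step needing input beyond Theorem \ref{DeBruijn2} and the Cram\'er--Rao estimates — either ergodic decay of entropy for the Laguerre semigroup or a dominated-convergence estimate on $g^{(\alpha,\lambda)}(\tau,\cdot)$ near $\tau=0$.
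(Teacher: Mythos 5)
Your skeleton is the same as the paper's: integrate the local identity of Theorem \ref{DeBruijn2} over $[a,b]\subset(0,1)$ (justified by Propositions \ref{FiniteFisher} and \ref{CramerRao}) and then identify the two endpoint limits of $\tau\mapsto D(X_\tau\|\gamma_{\alpha,\lambda})$. Your treatment of the limit $b\to 1^-$ is correct and is a legitimate, arguably cleaner, variant of the paper's argument: lower semicontinuity for the $\liminf$, and for the upper bound the data-processing inequality applied to the kernel of Definition \ref{main}, whose invariance of $\gamma_{\alpha,\lambda}$ you verify on the Laplace transform \eqref{lap}; the paper reaches the same bound $D(X_b\|\gamma_{\alpha,\lambda})\le D(X\|\gamma_{\alpha,\lambda})$ via the Dembo--Zeitouni variational representation and the Carlen--Soffer contraction argument for the Laguerre semigroup, which is essentially a proof of that data-processing step.

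The genuine gap is the limit $a\to 0^+$, i.e.\ the claim $D(X_a\|\gamma_{\alpha,\lambda})\to 0$, which you leave to one of two unexecuted routes. The ``ergodic decay of entropy'' principle is not a free consequence of ergodicity: weak convergence $X_a\Rightarrow\gamma(\alpha,\lambda)$ plus lower semicontinuity only gives the useless $\liminf\ge 0$, and a quantitative decay would require either the density ratio $g^{(\alpha,\lambda)}(\tau_0,\cdot)/\gamma_{\alpha,\lambda}$ to lie in $L^2(\gamma_{\alpha,\lambda})$ (which fails in general under the mere moment assumptions, because of the $e^{\lambda u}$ weight coming from $1/\gamma_{\alpha,\lambda}$ against the only-polynomial tail control of Lemma \ref{tech2}) or a logarithmic Sobolev inequality, which would be circular since Proposition \ref{LSI} is deduced from the very identity you are proving; a general ``finite initial entropy plus Harris ergodicity implies entropy tends to zero'' theorem exists but needs hypotheses you neither state nor verify. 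Your alternative, ``read it off from \eqref{den} and the Bessel asymptotics,'' is not carried out and is not routine: the uniform bounds of Lemmas \ref{tech2} and \ref{tech3} carry constants depending on the compact interval $[a,b]$, so they do not directly survive $a\to 0$. The paper closes this step by a short elementary argument you did not find: since $X_a=(1-a)\gamma(\alpha,\lambda)+aY(a,X,\lambda)$ with the second summand nonnegative and independent, entropy monotonicity under independent convolution and the scaling identity $H(cY)=H(Y)+\log c$ give $H(\gamma_{\alpha,\lambda})-H(X_a)\le -\log(1-a)$, while $X_a\ge (1-a)\gamma(\alpha,\lambda)$ gives $\mathbb{E}[\log\gamma_{\alpha,\lambda}]-\mathbb{E}[\log X_a]\le -\log(1-a)$; combining these through the decomposition of $D(X_a\|\gamma_{\alpha,\lambda})$ into Shannon entropy and the logarithmic moment yields $0\le D(X_a\|\gamma_{\alpha,\lambda})\le -\alpha\log(1-a)\to 0$. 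Without some argument of this kind, your proof is incomplete at precisely the step you yourself flag as the only obstacle.
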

\begin{proof}
First of all, note that, by Propositions \ref{FiniteFisher} and \ref{CramerRao}, we can integrate $1/(\lambda\tau)J_{st,\gamma}(X_{\tau})$ over any compact subsets $[a,b]$ strictly contained in $[0,1]$. Thus, we get:
\begin{align*}
D(X_b\|\gamma_{\alpha,\lambda})-D(X_a\|\gamma_{\alpha,\lambda})=\int_a^b\frac{1}{\lambda \tau}J_{st,\gamma}(X_{\tau})d\tau.
\end{align*}
Now, we want to prove that:
\begin{align*}
\underset{b\rightarrow 1^-}{\lim}D(X_b\|\gamma_{\alpha,\lambda})=D(X\|\gamma_{\alpha,\lambda}),\quad
\underset{a\rightarrow 0^+}{\lim}D(X_a\|\gamma_{\alpha,\lambda})=0.
\end{align*}
Let us study the first limit. We know that $X_b\rightarrow X$ in distribution when $b\rightarrow 1^-$ by Lemma \ref{Inter}. Since the relative entropy is lower semicontinuous, we have that:
\begin{align*}
\underset{b\rightarrow 1^-}{\liminf}D(X_b\|\gamma_{\alpha,\lambda})\geq D(X\|\gamma_{\alpha,\lambda}).
\end{align*}
Now, by Lemma $6.2.13$ of \cite{DemZei}, we have the following representation for the relative entropy:
\begin{align*}
D(X\|\gamma_{\alpha,\lambda})=\underset{\phi\in C_b(\mathbb{R}_+)}{\sup}\bigg(\int_0^{+\infty}\phi(x)f_X(x)dx-\log\bigg(\int_0^{+\infty}\exp(\phi(x))\gamma_{\alpha,\lambda}(x)dx\bigg)\bigg).
\end{align*}
Following the beginning of the proof of Lemma $1.2$ in \cite{Carlen2}, using the fact that the Laguerre semigroup is a contraction on every $L^p\big(\mathbb{R}_+^*,\gamma_{\alpha,\lambda}(x)dx\big)$, for $p\geq 1$, and the fact that $P_t\big(C_b(\mathbb{R}_+)\big)\subset C_b(\mathbb{R}_+)$, we obtain:
\begin{align*}
D(X_b\|\gamma_{\alpha,\lambda})\leq D(X\|\gamma_{\alpha,\lambda}).
\end{align*} 
Thus, the first limit is proved. To finish, let us prove that $D(X_a\|\gamma_{\alpha,\lambda}) \to 0$ as $a \to 0$. Using 
Definition \ref{main} along with the known fact that entropy increases with
independent convolution, we deduce that
\begin{align*}
 H(\gamma({\alpha, \lambda})) -  H(X_a) \le H(\gamma({\alpha,
  \lambda})) - H((1-a)\gamma(\alpha, \lambda)) =  - \log (1-a)
\end{align*}
where in the last equality we used the known identity
$H(cY) = H(Y) + \log c$ for $c>0$.  Likewise, still using Definition \ref{main}
and the fact that the second summand in this representation is
positive, we see that
\begin{align*}
\ \mathbb{E} \left[ \log \gamma_{\alpha, \lambda} \right]  -
    \mathbb{E} \left[ \log X_a \right]  \le - \log(1-a). 
\end{align*}
Using the following relation,
\begin{align*}
D(X_a\|\gamma_{\alpha,\lambda})=H(\gamma_{\alpha,\lambda})-H(X_a)+(\alpha-1)\int_0^{+\infty}\big(\gamma_{\alpha,\lambda}(u)-g^{(\alpha,\lambda)}(\tau,u)\big)\log(u)du.
\end{align*} 
we obtain, 
\begin{align*}
0 \le  D(X_a\|\gamma_{\alpha,\lambda})  \le   -\alpha \log(1-a)
\end{align*}
and the conclusion follows.
\end{proof}
\begin{rem}\label{sec:shannon}
By fixing the logarithmic moment of $X$ to be equal to that of $\gamma_{\alpha,\lambda}$, we obtain the following straightforward identity:
\begin{align*}
H(\gamma_{\alpha,\lambda})-H(X)= \int_0^1 \frac{1}{\lambda
      \tau}J_{st,\gamma}(X_{\tau}) d \tau,
\end{align*}
where $H$ is the Shannon entropy.
\end{rem}

%%%%%%%%%%%%%%%%%%%%%%%%%%%%%%%%%%%%%%%%%%%%%%%%%%%%%%%%%%%%%%%%%%%%%%%%%%%%%%%%%%%%%%%%%%%%%%%%%%%%%%%%%%%%%%%%%%

\section{A new proof of LSI for the gamma case with $\alpha\geq\frac{1}{2}$}\label{sec-LSI}

% - Une preuve courte de la formule de Mehler
% - Une preuve courte de la formule d'entrelacement
% - Structurer ces deux résultats en un seul lemme
% - Structurer la nouvelle preuve de l'inégalité de Sobolev logarithmique.

Before stating the main result of this section, we introduce some
notations. Let $\mathcal{B}\big((0,+\infty)\big)$ be the set of
bounded measurable functions defined on $(0,+\infty)$ and
$\mathcal{C}^1_b\big((0,+\infty)\big)$ be the set of continuously
differentiable functions with bounded derivatives up to order $1$. For
simplicity, we denote by
$\{P^{(\alpha,\lambda)}_{\tau},\ \tau\in(0,1)\}$ the Laguerre
semigroup after the time change $-\log(\tau)/\lambda$ acting on
functions in $\mathcal{B}\big((0,+\infty)\big)$. We recall the
well-known transition kernel of this semigroup with respect to the
Lebesgue measure:
\begin{align*}
p^{(\alpha,\lambda)}_{\tau}(x,u)=\dfrac{\lambda}{1-\tau}\bigg(\dfrac{u}{\tau x}\bigg)^{\frac{\alpha-1}{2}}\exp\bigg(-\dfrac{\lambda u}{1-\tau}\bigg)\exp\bigg(-\dfrac{\lambda\tau x}{1-\tau}\bigg)I_{\alpha-1}\bigg(\dfrac{2\lambda\sqrt{ux\tau}}{1-\tau}\bigg)
\end{align*}
\noindent
Moreover, it should be clear from the previous formula that the Laguerre semigroup is symmetric on $L^2\big(\mathbb{R}^*_+,\gamma_{\alpha,\lambda}(u)du\big)$.
\begin{lem}\label{MehInter}
Let $\alpha \geq \frac{1}{2}$. We have the following stochastic representations:
\begin{align}
&\forall f\in \mathcal{B}\big((0,+\infty)\big),\ P^{(\alpha,\lambda)}_{\tau}(f)(x)=\mathbb{E}\bigg[f\bigg((1-\tau)\gamma(\alpha-\frac{1}{2},\lambda)+\bigg((\sqrt{\tau}\sqrt{x}+\sqrt{\frac{1-\tau}{2\lambda}}Z\bigg)^2\bigg)\bigg],\\
&\forall f\in \mathcal{C}^1_b\big((0,+\infty)\big),\ \partial_x^{\sigma}\big(P^{(\alpha,\lambda)}_{\tau}(f)\big)(x)=\sqrt{\tau}\mathbb{E}\bigg[\dfrac{\big(\sqrt{\tau}\sqrt{x}+\sqrt{\frac{1-\tau}{2\lambda}}Z\big)}{(X^x_\tau)^{\frac{1}{2}}}\partial^{\sigma}(f)(X^x_\tau)\bigg]\label{eq:1},
\end{align}
with:
\begin{align*}
&\partial_x^{\sigma}(\phi)(x)=\sqrt{x}\phi'(x),\\
&X^x_\tau=(1-\tau)\gamma(\alpha-\frac{1}{2},\lambda)+\bigg(\sqrt{\tau}\sqrt{x}+\sqrt{\frac{1-\tau}{2\lambda}}Z\bigg)^2.
\end{align*}
\end{lem}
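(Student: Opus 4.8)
The first identity in Lemma \ref{MehInter} is an immediate restatement of Corollary \ref{Melherlike3}. Indeed, the random variable $X_\tau$ constructed in Definition \ref{main} has density $g^{(\alpha,\lambda)}(\tau,u) = \gamma_{\alpha,\lambda}(u) P^{(\alpha,\lambda)}_{-\log(\tau)/\lambda}(f_X/\gamma_{\alpha,\lambda})(u)$ whenever $\mathbb{P}_X$ admits a density, and more generally the transition kernel $p^{(\alpha,\lambda)}_\tau(x,u)$ recalled just above the statement is exactly the kernel appearing in formula (\ref{den}) when $\mathbb{P}_X = \delta_x$. Therefore $P^{(\alpha,\lambda)}_\tau(f)(x) = \mathbb{E}[f(X^x_\tau)]$ where $X^x_\tau$ is the smart path started from the deterministic point $x$, and Corollary \ref{Melherlike3} applied with $X \equiv x$ gives precisely $X^x_\tau \overset{\mathcal{L}}{=} (1-\tau)\gamma(\alpha-\tfrac12,\lambda) + (\sqrt{\tau}\sqrt{x} + \sqrt{(1-\tau)/(2\lambda)}Z)^2$, which is the first displayed formula. (The hypothesis $\alpha \geq 1/2$ is what makes $\gamma(\alpha-1/2,\lambda)$ well-defined, with the convention that $\gamma(0,\lambda) \equiv 0$.)

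For the second identity, the plan is to differentiate the first one in $x$ and push the derivative inside the expectation. Write $W = W(x) = \sqrt{\tau}\sqrt{x} + \sqrt{(1-\tau)/(2\lambda)}Z$, so that $X^x_\tau = (1-\tau)\gamma(\alpha-\tfrac12,\lambda) + W^2$ and $P^{(\alpha,\lambda)}_\tau(f)(x) = \mathbb{E}[f((1-\tau)\gamma(\alpha-\tfrac12,\lambda) + W^2)]$. Since $f \in \mathcal{C}^1_b$, for fixed realizations of $\gamma(\alpha-\tfrac12,\lambda)$ and $Z$ the integrand is differentiable in $x$ with
\begin{align*}
\partial_x\big( f((1-\tau)\gamma(\alpha-\tfrac12,\lambda) + W^2)\big) = f'(X^x_\tau)\cdot 2W\cdot \partial_x W = f'(X^x_\tau)\cdot 2W\cdot \frac{\sqrt{\tau}}{2\sqrt{x}} = \frac{\sqrt{\tau}\,W}{\sqrt{x}}\,f'(X^x_\tau),
\end{align*}
and this is bounded (uniformly near any fixed $x>0$) by an integrable function of $(\gamma(\alpha-\tfrac12,\lambda),Z)$ because $f'$ is bounded and $W$ has moments of all orders; dominated convergence then justifies $\partial_x P^{(\alpha,\lambda)}_\tau(f)(x) = \mathbb{E}[\sqrt{\tau}\,W\,f'(X^x_\tau)/\sqrt{x}]$. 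Multiplying by $\sqrt{x}$ to form $\partial_x^\sigma$ and writing $f'(X^x_\tau) = \partial^\sigma(f)(X^x_\tau)/\sqrt{X^x_\tau}$ gives
\begin{align*}
\partial_x^\sigma\big(P^{(\alpha,\lambda)}_\tau(f)\big)(x) = \sqrt{\tau}\,\mathbb{E}\bigg[\frac{\big(\sqrt{\tau}\sqrt{x} + \sqrt{(1-\tau)/(2\lambda)}Z\big)}{(X^x_\tau)^{1/2}}\,\partial^\sigma(f)(X^x_\tau)\bigg],
\end{align*}
which is (\ref{eq:1}).

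The only delicate point is the differentiation under the expectation sign: one must check that the difference quotients are dominated, uniformly for $x$ in a neighbourhood of the evaluation point, by an integrable random variable. This is routine here since $\partial_x W = \sqrt{\tau}/(2\sqrt{x})$ is locally bounded away from $x=0$, $W$ is Gaussian hence has all moments, and $\|f'\|_\infty < \infty$; so there is no real obstacle, merely a standard dominated-convergence verification. One should also note that, strictly speaking, $(\ref{eq:1})$ as written presupposes $x>0$ so that $\sqrt{x}$ and the division are meaningful, which is the natural domain for $\partial_x^\sigma$.
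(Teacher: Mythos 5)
Your proof is correct and follows essentially the same route as the paper: identify $P^{(\alpha,\lambda)}_\tau(f)(x)=\mathbb{E}[f(X^x_\tau)]$ by taking $\mathbb{P}_X=\delta_x$ in Definition \ref{main} together with the representation of Corollary \ref{Melherlike3} (Proposition \ref{Melherlike2} with $p=1$ covering $\alpha=1/2$), and then obtain (\ref{eq:1}) by differentiating under the expectation — exactly the ``standard computations'' the paper leaves implicit, which your dominated-convergence justification fills in properly.
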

\begin{proof}:
Let $f$ be in $\mathcal{B}\big((0,+\infty)\big)$. Let $x\in(0,+\infty)$. Let $X$ be a random variable whose law is the Dirac measure at $x$. We denote by $g^{(\alpha,\lambda,x)}(\tau,u)$ the density of the gamma smart path built with such a $X$ denoted by $X^x_\tau$. By definition of the Laguerre semigroup and Definition \ref{main}, we readily have:
\begin{align*}
P^{(\alpha,\lambda)}_{\tau}(f)(x)=\int_0^{+\infty}f(u)g^{(\alpha,\lambda,x)}(\tau,u)du=\mathbb{E}[f\big(X^x_\tau\big)].
\end{align*}
Formula (\ref{eq:1}) follows by standard computations. 
\end{proof}
\noindent
We note in particular that:
\begin{align}
\mid{\dfrac{\big(\sqrt{\tau}\sqrt{x}+\sqrt{\frac{1-\tau}{2\lambda}}Z\big)}{(X^x_\tau)^{\frac{1}{2}}}\mid}\leq 1\label{eq:2}
\end{align}
\noindent
Using (\ref{eq:1}), we want to derive a simple bound for the Fisher information structure along the evolute $X_{\tau}$ involving the Fisher information structure of $X$. From it, we obtain the logarithmic Sobolev inequality for the gamma law which is known to hold for $\alpha\geq 1/2$ (\cite{Bakry}). This approach extends a classical Cauchy-Schwarz argument from the Gaussian to the gamma case. Let $f_X$ be the density of the random variable $X$.
\begin{prop}\label{LSI}
Let $\alpha\geq\frac{1}{2}$. Assume that $J_{st,\gamma}(X)<+\infty$ and that $f_X$ is smooth enough. Then, we have:
\begin{align*}
D(X\|\gamma_{\alpha, \lambda})\leq \frac{1}{\lambda}J_{st,\gamma}(X).
\end{align*}
\end{prop}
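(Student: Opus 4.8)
The plan is to combine the integrated De Bruijn identity \eqref{eq-IntBruijn} with a \emph{sub-commutation} (or monotonicity) estimate for the standardized Fisher information along the smart path. Concretely, Theorem \ref{sec:de-bruijn-formula} gives
\begin{align*}
D(X\|\gamma_{\alpha,\lambda}) = \int_0^1 \frac{1}{\lambda\tau}J_{st,\gamma}(X_{\tau})\,d\tau,
\end{align*}
so it suffices to prove the pointwise bound $J_{st,\gamma}(X_{\tau})\le \tau\, J_{st,\gamma}(X)$ for every $\tau\in(0,1)$; integrating $\int_0^1 (1/\lambda\tau)\cdot \tau\, J_{st,\gamma}(X)\,d\tau = (1/\lambda)J_{st,\gamma}(X)$ then yields the claim. (One first reduces to the case $\mathbb{E}[X]=\alpha/\lambda$; if $X$ has the wrong mean, rescale — $J_{st,\gamma}$ and $D(\cdot\|\gamma_{\alpha,\lambda})$ transform compatibly under $X\mapsto cX$ with a matching change of $\lambda$, and the normalized random variable is the one to which the argument applies. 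Alternatively, since $J_{st,\gamma}(X)$ blows up unless $\mathbb{E}[X]=\alpha/\lambda$, one simply takes this as part of ``$f_X$ smooth enough and $J_{st,\gamma}(X)<\infty$''.)

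For the pointwise estimate I would use the representation from Corollary \ref{Melherlike3} together with the intertwining relation \eqref{eq:1} of Lemma \ref{MehInter}. Writing $X_{\tau}$ as the output of the Laguerre semigroup, $g^{(\alpha,\lambda)}(\tau,\cdot) = \gamma_{\alpha,\lambda}\cdot P^{(\alpha,\lambda)}_{\tau}(f_X/\gamma_{\alpha,\lambda})$, the standardized Fisher information $J_{st,\gamma}(X_{\tau})$ can be expressed — via the operator $\partial_u^\sigma = \sqrt{u}\,\partial_u$, which by the Introduction is the natural first-order operator for the Laguerre dynamic — as a weighted $L^2(\gamma_{\alpha,\lambda})$ norm of $\partial_u^\sigma$ applied to (a function of) $P^{(\alpha,\lambda)}_{\tau}(f_X/\gamma_{\alpha,\lambda})$. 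The intertwining formula \eqref{eq:1} lets one pull $\partial_x^\sigma$ through the semigroup, producing the prefactor $\sqrt{\tau}$ and the ratio $(\sqrt{\tau}\sqrt{x}+\sqrt{(1-\tau)/2\lambda}\,Z)/(X^x_\tau)^{1/2}$, which is bounded by $1$ in absolute value by \eqref{eq:2}. Applying Jensen's (Cauchy--Schwarz) inequality to move the square inside the conditional expectation $\mathbb{E}[\cdot\,|\,X^x_\tau]$, and then using that the Laguerre semigroup is a contraction on $L^2(\gamma_{\alpha,\lambda})$ (symmetry + Markov), should collapse everything to $\tau$ times the corresponding Fisher-type quantity for $X$ itself, i.e. $J_{st,\gamma}(X_\tau)\le \tau J_{st,\gamma}(X)$.

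The main obstacle is bookkeeping the passage between the ``score'' formulation of $J_{st,\gamma}$ (in terms of $\rho_{X_\tau}-\rho_{\alpha,\lambda}$) and the ``semigroup'' formulation (a norm of $\partial^\sigma$ of $P_\tau(f_X/\gamma_{\alpha,\lambda})$ against $\gamma_{\alpha,\lambda}$), and making sure all integrations by parts and differentiations under the integral are justified — this is exactly where the $\alpha\ge 1/2$ hypothesis enters, since it is what makes Corollary \ref{Melherlike3} available (the auxiliary gamma shape $\alpha-\tfrac12$ must be nonnegative) and what controls the behaviour of the densities at $0$. The smoothness/finiteness hypotheses on $f_X$ are there precisely to license these manipulations; granting them, the combination of the contraction property and the pointwise bound \eqref{eq:2} does the real work. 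A secondary technical point is to verify that the conditional-expectation/Jensen step does not lose the centering: one checks that the ``$-\rho_{\alpha,\lambda}$'' (equivalently, the $-\lambda\tau/(1-\tau)$ shift in $I^\tau_\gamma$ versus $J_{st,\gamma}$) is handled consistently on both sides, using Corollary \ref{mean} and the Cramér--Rao identity of Proposition \ref{CramerRao} to keep the subtracted constants aligned.
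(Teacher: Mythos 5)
Your proposal is correct and follows essentially the same route as the paper: write $J_{st,\gamma}(X_\tau)$ in semigroup form via $P^{(\alpha,\lambda)}_{\tau}(f_X/\gamma_{\alpha,\lambda})$, use the intertwining relation \eqref{eq:1} with the pointwise bound \eqref{eq:2}, apply Cauchy--Schwarz and the invariance of $\gamma_{\alpha,\lambda}$ to get $J_{st,\gamma}(X_{\tau})\le \tau J_{st,\gamma}(X)$, and then integrate against the De Bruijn identity \eqref{eq-IntBruijn}. The only cosmetic differences (phrasing the last step as an $L^2$-contraction rather than invariance, and your remark on normalizing $\mathbb{E}[X]=\alpha/\lambda$) do not change the argument.
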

\begin{proof}
The density of $X_{\tau}$ is given by:
\begin{align*}
g^{(\alpha,\lambda)}(\tau,u)=\int_{0}^{+\infty}p^{(\alpha,\lambda)}_{\tau}(x,u)f_X(x)dx.
\end{align*}
By definition of the Fisher information structure, we have:
\begin{align*}
J_{st,\gamma}(X_{\tau})&=\int_0^{+\infty}ug^{(\alpha,\lambda)}(\tau,u)\bigg[\partial_u\big(\log(\dfrac{g^{(\alpha,\lambda)}(\tau,u)}{\gamma_{\alpha,\lambda}(u)})\big)\bigg]^2du,\\
&=\int_0^{+\infty}u\dfrac{\big(\partial_u\big(f^{(\alpha,\lambda)}(\tau,u)\big)\big)^2}{f^{(\alpha,\lambda)}(\tau,u)}\gamma_{\alpha,\lambda}(u)du,
\end{align*}
with $f^{(\alpha,\lambda)}(\tau,u)=g^{(\alpha,\lambda)}(\tau,u)/\gamma_{\alpha,\lambda}(u)$. Using duality and symmetry of the transition kernel of the Laguerre semigroup with respect to the measure $\gamma_{\alpha,\lambda}(u)du$, we have the following representation:
\begin{align*}
J_{st,\gamma}(X_{\tau})=\int_0^{+\infty}u\dfrac{\big(\partial_u\big(P^{(\alpha,\lambda)}_{\tau}(f_X/\gamma_{\alpha,\lambda})\big)\big)^2}{P^{(\alpha,\lambda)}_{\tau}(f_X/\gamma_{\alpha,\lambda})}\gamma_{\alpha,\lambda}(u)du.
\end{align*}  
At this point, we use the fact that $f_X$ is smooth enough in order to ensure that (\ref{eq:1}) is true for the function $f_X/\gamma_{\alpha,\lambda}$. We obtain:
\begin{align*}
J_{st,\gamma}(X_{\tau})&\leq \tau \int_0^{+\infty}\dfrac{\mathbb{E}\big[\mid{\partial^{\sigma}(f_X/\gamma_{\alpha,\lambda})(X^x_\tau)\mid}\big]^2}{P^{(\alpha,\lambda)}_{\tau}(f_X/\gamma_{\alpha,\lambda})}\gamma_{\alpha,\lambda}(u)du,\\
&\leq \tau\int_0^{+\infty}\dfrac{P^{(\alpha,\lambda)}_{\tau}\big(\mid{\partial^{\sigma}(f_X/\gamma_{\alpha,\lambda})\mid}\big)^2}{P^{(\alpha,\lambda)}_{\tau}(f_X/\gamma_{\alpha,\lambda})}\gamma_{\alpha,\lambda}(u)du,\\
&\leq \tau\int_0^{+\infty}P^{(\alpha,\lambda)}_{\tau}\bigg(\dfrac{\mid{\partial^{\sigma}(f_X/\gamma_{\alpha,\lambda})\mid}^2}{f_X/\gamma_{\alpha,\lambda}}\bigg)\gamma_{\alpha,\lambda}(u)du,\\
&\leq \tau\int_0^{+\infty}\dfrac{\mid{\partial^{\sigma}(f_X/\gamma_{\alpha,\lambda})\mid}^2}{f_X/\gamma_{\alpha,\lambda}}\gamma_{\alpha,\lambda}(u)du,\\
&\leq \tau\int_0^{+\infty}u\dfrac{\big(\partial_u\big(f_X/\gamma_{\alpha,\lambda}\big)\big)^2}{f_X/\gamma_{\alpha,\lambda}}\gamma_{\alpha,\lambda}(u)du=\tau J_{st,\gamma}(X),
\end{align*}
where we have used successively (\ref{eq:1}), (\ref{eq:2}), Cauchy-Schwarz inequality and the invariance of the gamma measure along the Laguerre dynamic. Using De Bruijn identity (\ref{eq-IntBruijn}) and integrating the previous inequality, we obtain the following form of the logarithmic Sobolev inequality for the gamma measure $\gamma_{\alpha,\lambda}$ with $\alpha\geq 1/2$:
\begin{align*}
D(X\|\gamma_{\alpha, \lambda})\leq \frac{1}{\lambda}J_{st,\gamma}(X).
\end{align*}
\end{proof}
\begin{rem}
Through the course of the previous proof, we have obtained the information theoretical inequality,  $J_{st,\gamma}(X_{\tau})\leq\tau J_{st,\gamma}(X)$, which should be compared to the following classical inequality obtained thanks to the Blachman-Stam inequality:
\begin{align*}
\forall t\in(0,1),\ J_{st}(\sqrt{t}X+\sqrt{1-t}Z)\leq tJ_{st}(X),
\end{align*}
where $Z$ is a standard normal random variable independent of $X$ and $J_{st}(.)$ denotes the classical standardized Fisher information structure associated with the Gaussian law.
\end{rem}

%%%%%%%%%%%%%%%%%%%%%%%%%%%%%%%%%%%%%%%%%%%%%%%%%%%%%%%%%%%%%%%%%%%%%%%%%%%%%%%%%%%%%%%%%%%%%%%%%%%%%%%%%%%%%%%%%%

\section{A new HSI-type inequality}\label{sec-HSI}
In this section, we develop the tools needed to obtain an HSI-type inequality linking relative Entropy, Stein discrepancy and standardized Fisher information in the spirit of the ones obtained in \cite{LNP}. For this purpose, we define a new type of Stein kernel based on a rewriting of the Laguerre operator, $\mathcal{L}_{\alpha,\lambda}$. This rewriting is justified by the following lemma which provides Bismut-type representation for iterated actions of the operator $\partial_x^{\sigma}$ previously introduced on the Laguerre semigroup.
\begin{lem}\label{Bismut}
Let $\alpha\geq 1/2$. Then, we have:
\begin{align}
\forall f\in \mathcal{B}\big((0,+\infty)\big),\  \partial_x^{\sigma} \big(P^{(\alpha,\lambda)}_{\tau}(f)\big)(x)=\sqrt{\frac{\lambda}{2}}\sqrt{\dfrac{\tau}{1-\tau}}\mathbb{E}\big[Zf(X^x_\tau)\big].\label{eq-Bismut}
\end{align}
Moreover, for any integer $k\geq 1$, we have:
\begin{align*}
\forall f\in \mathcal{B}\big((0,+\infty)\big),\ (\partial_x^{\sigma})^k\big(P^{(\alpha,\lambda)}_{\tau}(f)\big)(x)=\bigg(\frac{\lambda}{2}\bigg)^{\frac{k}{2}}\bigg(\dfrac{\tau}{1-\tau}\bigg)^{\frac{k}{2}}\mathbb{E}\big[H_k(Z)f(X^x_\tau)\big],
\end{align*}
where $H_k(.)$ denotes the k-th Hermite polynomial.
\end{lem}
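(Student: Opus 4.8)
The plan is to read off the action of $\partial_x^{\sigma}$ directly on the explicit representation $P^{(\alpha,\lambda)}_{\tau}(f)(x)=\mathbb{E}[f(X^x_\tau)]$ furnished by Lemma \ref{MehInter}, where $X^x_\tau=(1-\tau)\gamma(\alpha-\tfrac12,\lambda)+\big(\sqrt{\tau}\sqrt{x}+\sqrt{\tfrac{1-\tau}{2\lambda}}Z\big)^2$. For $f\in\mathcal{C}^1_b$ the first formula is essentially visible from \eqref{eq:1}: writing $W=\sqrt{\tau}\sqrt{x}+\sqrt{\tfrac{1-\tau}{2\lambda}}Z$ one has $\partial_x^{\sigma}(P^{(\alpha,\lambda)}_{\tau}(f))(x)=\sqrt{\tau}\,\mathbb{E}[W f'(X^x_\tau)]$, and since $\partial_Z X^x_\tau = 2\sqrt{\tfrac{1-\tau}{2\lambda}}\,W$, a Gaussian integration by parts in $Z$ turns this into $\sqrt{\tfrac{\lambda}{2}}\sqrt{\tfrac{\tau}{1-\tau}}\,\mathbb{E}[Z f(X^x_\tau)]$. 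To obtain the statement for merely bounded measurable $f$, and to handle the iterates, I would argue through the Gaussian-convolution structure of the Laguerre kernel, as follows.

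Write $G:=(1-\tau)\gamma(\alpha-\tfrac12,\lambda)$ (independent of $Z$), $a:=\sqrt{\tau}\sqrt{x}$ and $c:=\sqrt{\tfrac{1-\tau}{2\lambda}}$. Conditioning on $G$ and substituting $w=a+cZ$ in the Gaussian integral yields $P^{(\alpha,\lambda)}_{\tau}(f)(x)=\mathbb{E}_G\big[\int_{\mathbb{R}}f(G+w^2)\,\tfrac1c\varphi\big(\tfrac{w-a}{c}\big)\,dw\big]$, with $\varphi$ the standard Gaussian density; for fixed $\tau$ this exhibits $P^{(\alpha,\lambda)}_{\tau}(f)$, as a function of $x$, as a Gaussian convolution depending on $x$ only through the location parameter $a$. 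The elementary but crucial remark is that $\partial_x^{\sigma}=\sqrt{x}\,\partial_x=\tfrac{\sqrt{\tau}}{2}\,\partial_a$ on any smooth function of $a$ alone; since a Gaussian convolution is $C^\infty$ in its location parameter, each iterate $(\partial_x^{\sigma})^j P^{(\alpha,\lambda)}_{\tau}(f)$ is again a smooth function of $a$ only, so one may iterate freely and get $(\partial_x^{\sigma})^k P^{(\alpha,\lambda)}_{\tau}(f)=(\sqrt{\tau}/2)^k\,\partial_a^k\,P^{(\alpha,\lambda)}_{\tau}(f)$. Differentiating the convolution $k$ times in $a$ under the expectation and the integral — legitimate by dominated convergence since $f$ is bounded and $\varphi$ with all its derivatives is bounded and integrable, uniformly for $a$ near any fixed point — and using the Rodrigues formula $\varphi^{(k)}=(-1)^kH_k\varphi$ for the probabilist Hermite polynomials, one gets $\partial_a^k\big[\tfrac1c\varphi\big(\tfrac{w-a}{c}\big)\big]=c^{-k}H_k\big(\tfrac{w-a}{c}\big)\tfrac1c\varphi\big(\tfrac{w-a}{c}\big)$; reverting the substitution gives $\partial_a^k P^{(\alpha,\lambda)}_{\tau}(f)(x)=c^{-k}\,\mathbb{E}[H_k(Z)f(X^x_\tau)]$. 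Hence $(\partial_x^{\sigma})^k P^{(\alpha,\lambda)}_{\tau}(f)(x)=\big(\tfrac{\sqrt{\tau}}{2c}\big)^k\mathbb{E}[H_k(Z)f(X^x_\tau)]$, and since $\tfrac{\sqrt{\tau}}{2c}=\sqrt{\tfrac{\lambda}{2}}\sqrt{\tfrac{\tau}{1-\tau}}$ this is the claimed formula, with $k=1$ and $H_1(z)=z$ recovering \eqref{eq-Bismut}.

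Alternatively one can run this as an induction on $k$, the base case $k=0$ being $P^{(\alpha,\lambda)}_{\tau}(f)(x)=\mathbb{E}[H_0(Z)f(X^x_\tau)]$ and the inductive step differentiating the convolution once in $a$ and applying a single Gaussian integration by parts, in the form $\mathbb{E}[H_k(Z)\,\partial_Z g(Z)]=\mathbb{E}[(ZH_k(Z)-H_k'(Z))g(Z)]=\mathbb{E}[H_{k+1}(Z)g(Z)]$, using the recurrence $H_{k+1}(z)=zH_k(z)-H_k'(z)$. The hypothesis $\alpha\geq 1/2$ enters only to make $\gamma(\alpha-\tfrac12,\lambda)$ meaningful (with the convention $\gamma(0,\lambda)=0$ a.s.\ when $\alpha=1/2$), exactly as in Lemma \ref{MehInter}, and plays no further role. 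The only genuinely delicate point is the justification of differentiation under the expectation when $f$ is merely bounded and measurable, which is precisely where the smoothing by the Gaussian kernel is essential; it is otherwise routine.
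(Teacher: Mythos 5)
Your argument is correct, and for the general statement it takes a genuinely different route from the paper. The paper proves \eqref{eq-Bismut} exactly as in your first paragraph — the intertwining relation \eqref{eq:1} for $f\in\mathcal{C}^1_b$, cancellation of $(X^x_\tau)^{1/2}$, then one Gaussian integration by parts — but it then extends to $f\in\mathcal{B}\big((0,+\infty)\big)$ ``by density'' and settles the case of general $k$ with only a one-line appeal to ``a recursive argument together with standard relations regarding Hermite polynomials.'' Your main argument replaces both of these steps by an explicit computation: conditioning on $G=(1-\tau)\gamma(\alpha-\tfrac12,\lambda)$ exhibits $P^{(\alpha,\lambda)}_{\tau}(f)(x)$ as a Gaussian convolution in the location parameter $a=\sqrt{\tau x}$, the observation $\partial_x^{\sigma}=\tfrac{\sqrt{\tau}}{2}\,\partial_a$ reduces the iterated operator to ordinary $a$-derivatives, and Rodrigues' formula $\varphi^{(k)}=(-1)^k H_k\varphi$ produces the Hermite polynomials with the right constant $\big(\tfrac{\sqrt{\tau}}{2c}\big)^k=\big(\tfrac{\lambda}{2}\big)^{k/2}\big(\tfrac{\tau}{1-\tau}\big)^{k/2}$. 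This buys two things the paper leaves implicit: the formula for merely bounded measurable $f$ comes directly (no approximation argument, and the smoothness of $P^{(\alpha,\lambda)}_{\tau}(f)$ in $x$ needed for $(\partial_x^{\sigma})^k$ to make sense is automatic from the Gaussian smoothing), and all orders $k$ are obtained at once; the price is the routine but necessary local-uniform domination of $\partial_a^k$ of the kernel, which you correctly flag and which holds since translates of $H_k\varphi$ are dominated, locally uniformly in $a$, by an integrable function. One small caveat: in your alternative induction, the step written as $\mathbb{E}[H_k(Z)\partial_Z g(Z)]=\mathbb{E}[H_{k+1}(Z)g(Z)]$ presupposes a differentiable integrand, i.e.\ smooth $f$; for bounded measurable $f$ the induction should instead let the $a$-derivative fall on the kernel and use $\partial_z\big(H_k(z)\varphi(z)\big)=-H_{k+1}(z)\varphi(z)$. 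Since your primary (non-inductive) argument is complete, this does not affect the proof.
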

\begin{proof} Let $f\in \mathcal{C}^1_b\big((0,+\infty)\big)$. By Lemma \ref{MehInter}, we have:
\begin{align*}
\partial_x^{\sigma}\big(P^{(\alpha,\lambda)}_{\tau}(f)\big)(x)&=\sqrt{\tau}\mathbb{E}\bigg[\dfrac{\big(\sqrt{\tau}\sqrt{x}+\sqrt{\frac{1-\tau}{2\lambda}}Z\big)}{(X^x_\tau)^{\frac{1}{2}}}\partial^{\sigma}(f)(X^x_\tau)\bigg],\\
&=\sqrt{\tau}\mathbb{E}\bigg[\big(\sqrt{\tau}\sqrt{x}+\sqrt{\frac{1-\tau}{2\lambda}}Z\big)f'(X^x_\tau)\bigg],\\
&=\sqrt{\frac{\lambda}{2}}\sqrt{\dfrac{\tau}{1-\tau}}\mathbb{E}\big[Zf(X^x_\tau)\big],
\end{align*}
where we have performed a Gaussian integration by parts in order to
get the last line. We proceed by density to extend this relation to
functions in $ \mathcal{B}\big((0,+\infty)\big)$. The general case is
obtained by a recursive argument together with standard relations
regarding Hermite polynomials.
\end{proof}
\noindent
Recall that the Laguerre operator is given by the following formula on
sufficiently smooth functions:
\begin{align*}
\mathcal{L}_{\alpha,\lambda}(f)(u)=u\dfrac{d^2 f}{du^2}(u)+(\alpha-\lambda u)\dfrac{d f}{du}(u).
\end{align*}
Using the operator $\partial_x^{\sigma}$ this can be rewritten as:
\begin{align*}
\mathcal{L}_{\alpha,\lambda}(f)(u)=\big(\partial_u^{\sigma}\big)^2(f)(u)+(\alpha-\frac{1}{2}-\lambda u)\dfrac{d f}{du}(u).
\end{align*}
Thus, it is natural to introduce the following Stein kernel for probability measure on $(0,+\infty)$.
\begin{defi}\label{Stein}
Let $X$ be a random variable with values in $(0,+\infty)$. Then, we define the Stein kernel of $X$, $\tau_X(.)$, for every smooth test function by:
\begin{align*}
\mathbb{E}[(\lambda X-\alpha+\frac{1}{2})\phi(X)]=\mathbb{E}[\tau_X(X)\sqrt{X}(\partial_x^{\sigma})^*(\phi)(X)],
\end{align*}
where $(\partial_x^{\sigma})^*(.)=\frac{d}{dx}(\sqrt{x}.)$. In particular, we have:
\begin{align}
\mathbb{E}[(\lambda X-\alpha+\frac{1}{2})\phi'(X)]=\mathbb{E}[\tau_X(X)(\partial_x^{\sigma})^2(\phi)(X)]\label{Stein}.
\end{align}
\end{defi}
\begin{rem}
When $\mathbb{E}[X]=\alpha/\lambda$, we note that $\mathbb{E}[\tau_X(X)]=1$. In particular, the Stein discrepancy is exactly the variance of $\tau_X(X)$.
\end{rem}
\noindent
Before stating the main result of this section, we introduce a
fundamental representation of the standardized Fisher information
structure in terms of the previously defined Stein kernel. This
representation should be compared to the one obtained in Proposition
$2.4$ (iii) of \cite{LNP}. At the core of its proof stand the
Bismut-type representation of
$\partial_x^{\sigma} (P^{(\alpha,\lambda)}_{\tau}(f))$ as well as the
intertwining relation (\ref{eq:1}). We assume that the random variable
$X$ has a density $f_X$ such that $f=f_X/\gamma_{\alpha,\lambda}$ is
smooth enough for the different analytical arguments to
hold. Moreover, we assume that the Stein kernel of $X$ exists.
\begin{prop}\label{FisherRep}
Let $\alpha\geq 1/2$. We have:
\begin{align*}
J_{st,\gamma}(X_{\tau})=\sqrt{\frac{\lambda}{2}}\dfrac{\tau}{\sqrt{1-\tau}}\mathbb{E}[(\tau_X(X)-1)Z\operatorname{\mathcal{V}}(X_\tau)\partial^{\sigma}(v_\tau)(X_\tau)],
\end{align*}
with 
\begin{align*}
&v_\tau=\log\big(P^{(\alpha,\lambda)}_{\tau}(f_X/\gamma_{\alpha,\lambda})\big),\\
&\operatorname{\mathcal{V}}(X_\tau)=\dfrac{\big(\sqrt{\tau}\sqrt{X}+\sqrt{\frac{1-\tau}{2\lambda}}Z\big)}{\sqrt{(1-\tau)\gamma(\alpha-\frac{1}{2},\lambda)+\big(\sqrt{\tau}\sqrt{X}+\sqrt{\frac{1-\tau}{2\lambda}}Z\big)^2}}.
\end{align*}
\end{prop}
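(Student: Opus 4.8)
The plan is to start from the representation of $J_{st,\gamma}(X_{\tau})$ in terms of the Laguerre semigroup acting on $f = f_X/\gamma_{\alpha,\lambda}$, then peel off two factors of $\partial^{\sigma}$ using the two representations available to us: the intertwining relation \eqref{eq:1} and the Bismut formula \eqref{eq-Bismut}. Concretely, writing $v_\tau = \log P^{(\alpha,\lambda)}_{\tau}(f)$, one has from the computation in the proof of Proposition \ref{LSI} that
\begin{align*}
J_{st,\gamma}(X_{\tau}) = \int_0^{+\infty} \frac{\big(\partial_u^{\sigma}(P^{(\alpha,\lambda)}_{\tau}(f))(u)\big)^2}{P^{(\alpha,\lambda)}_{\tau}(f)(u)} \gamma_{\alpha,\lambda}(u)\, du = \int_0^{+\infty} P^{(\alpha,\lambda)}_{\tau}(f)(u)\big(\partial_u^{\sigma}(v_\tau)(u)\big)^2 \gamma_{\alpha,\lambda}(u)\, du,
\end{align*}
since $\partial^{\sigma}(v_\tau) = \partial^{\sigma}(P^{(\alpha,\lambda)}_{\tau}(f))/P^{(\alpha,\lambda)}_{\tau}(f)$. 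The idea is to recognise one of these two factors of $\partial^{\sigma}(v_\tau)$ as something that can be re-expressed via one of the two kernel formulae, turning the $\gamma_{\alpha,\lambda}$-integral into an expectation over the smart-path construction.

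First I would rewrite the integral as an expectation with respect to $X_\tau$: using self-adjointness/invariance of the Laguerre dynamic, $\int P^{(\alpha,\lambda)}_{\tau}(f)(u)\,\psi(u)\,\gamma_{\alpha,\lambda}(u)du = \mathbb{E}[\psi(X_\tau)]$ when $\psi$ is expressed through $v_\tau$ and the density identity of Definition \ref{main}. Thus $J_{st,\gamma}(X_{\tau}) = \mathbb{E}\big[\big(\partial^{\sigma}(v_\tau)(X_\tau)\big)^2\big]$, and I want to replace one copy of $\partial^{\sigma}(v_\tau)(X_\tau)$. Here I would invoke \eqref{eq:1} applied to $f$ at the level of the chain rule: $\partial^{\sigma}(P^{(\alpha,\lambda)}_{\tau}(f))(x) = \sqrt{\tau}\,\mathbb{E}\big[\operatorname{\mathcal{V}}(X^x_\tau)\,\partial^{\sigma}(f)(X^x_\tau)\big]$ with $\operatorname{\mathcal{V}}$ as defined in the statement, while \eqref{eq-Bismut} gives $\partial^{\sigma}(P^{(\alpha,\lambda)}_{\tau}(f))(x) = \sqrt{\lambda/2}\,\sqrt{\tau/(1-\tau)}\,\mathbb{E}[Z f(X^x_\tau)]$. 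The key manoeuvre is: in the product $\partial^{\sigma}(v_\tau)\cdot\partial^{\sigma}(v_\tau)$, write the numerator $\partial^{\sigma}(P^{(\alpha,\lambda)}_{\tau}(f))$ once using the intertwining form and once using Bismut, and in one of the two slots insert the Stein kernel via \eqref{Stein}. Because $\partial^{\sigma}(f) = \partial^{\sigma}\big(f_X/\gamma_{\alpha,\lambda}\big)$ and the Stein kernel is exactly the object obtained by the integration-by-parts identity $\mathbb{E}[(\lambda X - \alpha + \tfrac12)\phi(X)] = \mathbb{E}[\tau_X(X)\sqrt{X}(\partial^{\sigma})^*\phi(X)]$, applying \eqref{Stein} to the relevant $\phi$ converts a weighted expectation of $\partial^{\sigma}(f)(X)$ into one involving $(\tau_X(X)-1)$ — the ``$-1$'' appearing because the pure gamma part of the identity contributes the complementary term and cancels, exactly as in the Gaussian computation of Proposition 2.4(iii) of \cite{LNP}.

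Assembling these, one factor of $\partial^{\sigma}(v_\tau)$ contributes the $\sqrt{\lambda/2}\,\tau/\sqrt{1-\tau}$ prefactor and a $Z$ through Bismut, and the other contributes $\operatorname{\mathcal{V}}(X_\tau)\partial^{\sigma}(v_\tau)(X_\tau)$ through the intertwining relation, while the Stein-kernel substitution replaces the ``input'' score by $\tau_X(X)-1$; this yields precisely
\begin{align*}
J_{st,\gamma}(X_{\tau}) = \sqrt{\frac{\lambda}{2}}\,\frac{\tau}{\sqrt{1-\tau}}\,\mathbb{E}\big[(\tau_X(X)-1)\,Z\,\operatorname{\mathcal{V}}(X_\tau)\,\partial^{\sigma}(v_\tau)(X_\tau)\big].
\end{align*}
The main obstacle I anticipate is bookkeeping the conditioning structure correctly: the expectation in the final formula is over the \emph{joint} law of $(X, Z, \gamma(\alpha-\tfrac12,\lambda))$ used to build $X_\tau$ via Corollary \ref{Melherlike3}, and one must check that the Fubini/tower-property manipulations needed to move $\partial^{\sigma}(v_\tau)$ inside the expectation, apply the Bismut Gaussian integration by parts, and then apply the Stein identity \eqref{Stein} are all justified — this is where the smoothness hypotheses on $f = f_X/\gamma_{\alpha,\lambda}$ and the existence of the Stein kernel of $X$ are used, together with the finiteness of $J_{st,\gamma}(X_\tau)$ from Proposition \ref{FiniteFisher} to guarantee integrability of the products. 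The algebraic identity matching coefficients is routine once the right decomposition of the square is chosen; choosing that decomposition (which factor gets Bismut, which gets intertwining, where the Stein kernel enters) is the only genuinely delicate point.
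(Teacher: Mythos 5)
You have correctly identified all the ingredients (Bismut gives the factor $Z$ and the prefactor $\sqrt{\lambda/2}\sqrt{\tau/(1-\tau)}$, the intertwining gives $\sqrt{\tau}\,\mathcal{V}\,\partial^{\sigma}(v_\tau)$, and the Stein kernel produces $\tau_X(X)-1$), but the mechanism you propose for combining them does not close, and this is precisely the point you defer as ``the only genuinely delicate point''. Starting from $J_{st,\gamma}(X_{\tau})=\mathbb{E}\big[(\partial^{\sigma}(v_\tau)(X_\tau))^2\big]$ and expanding $\partial^{\sigma}\big(P^{(\alpha,\lambda)}_{\tau}(f)\big)$ \emph{at the evolved point} once via \eqref{eq:1} and once via \eqref{eq-Bismut} merely gives two expressions for the same scalar, each involving a new, independent smart path started at $X_\tau$; the objects so produced ($f$ or $\partial^{\sigma}(f)$ evaluated at that secondary point) are not coupled to $X$, $Z$ and $\mathcal{V}(X_\tau)$ in the way the target formula requires, and there is no slot in which \eqref{Stein} can legitimately be inserted: \eqref{Stein} is an integration by parts under $\mathbb{P}_X$ with a test function of $X$, whereas in your scheme the outer integration runs over the law of $X_\tau$ and no expectation of the form $\mathbb{E}[(\lambda X-\alpha+\tfrac12)\phi'(X)]$ ever appears. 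Note also that the final identity involves the semigroup acting on the logarithm $v_\tau$, not on $f=f_X/\gamma_{\alpha,\lambda}$, so representations of $\partial^{\sigma}(P^{(\alpha,\lambda)}_{\tau}(f))$ are not the right objects to expand.

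The missing idea is a duality pivot which your sketch never performs. Since $\mathcal{L}_{\alpha,\lambda}$ is a diffusion, one first integrates by parts with respect to the carr\'e du champ and then uses the symmetry of $P^{(\alpha,\lambda)}_{\tau}$ in $L^2(\gamma_{\alpha,\lambda})$ together with $P^{(\alpha,\lambda)}_{\tau}\mathcal{L}_{\alpha,\lambda}=\mathcal{L}_{\alpha,\lambda}P^{(\alpha,\lambda)}_{\tau}$ to get
\begin{align*}
J_{st,\gamma}(X_{\tau})=\int_0^{+\infty}\partial^{\sigma}_u\big(P^{(\alpha,\lambda)}_{\tau}(f)\big)\,\partial^{\sigma}_u(v_\tau)\,\gamma_{\alpha,\lambda}(u)du
=-\int_0^{+\infty}\mathcal{L}_{\alpha,\lambda}\big(P^{(\alpha,\lambda)}_{\tau}(v_\tau)\big)(x)\,f_X(x)\,dx,
\end{align*}
which simultaneously returns the integration to $\mathbb{P}_X$ (where $\tau_X$ lives) and replaces $P^{(\alpha,\lambda)}_{\tau}(f)$ by $P^{(\alpha,\lambda)}_{\tau}(v_\tau)$ as the function to be differentiated. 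Writing $\mathcal{L}_{\alpha,\lambda}=(\partial^{\sigma})^2+(\alpha-\tfrac12-\lambda x)\partial_x$ and applying \eqref{Stein} with the test function $P^{(\alpha,\lambda)}_{\tau}(v_\tau)$ turns the drift term into $-\mathbb{E}\big[\tau_X(X)(\partial^{\sigma})^2\big(P^{(\alpha,\lambda)}_{\tau}(v_\tau)\big)(X)\big]$, so that $J_{st,\gamma}(X_{\tau})=\mathbb{E}\big[(\tau_X(X)-1)(\partial^{\sigma})^2\big(P^{(\alpha,\lambda)}_{\tau}(v_\tau)\big)(X)\big]$; this is where the ``$-1$'' really comes from. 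Finally, exactly one Bismut step \eqref{eq-Bismut} (yielding $Z$ and $\sqrt{\lambda/2}\sqrt{\tau/(1-\tau)}$) composed with one intertwining step \eqref{eq:1} (yielding $\sqrt{\tau}\,\mathcal{V}\,\partial^{\sigma}(v_\tau)$) gives $(\partial^{\sigma}_x)^2\big(P^{(\alpha,\lambda)}_{\tau}(v_\tau)\big)(x)=\sqrt{\lambda/2}\,\tau(1-\tau)^{-1/2}\,\mathbb{E}\big[Z\,\mathcal{V}(X^x_\tau)\,\partial^{\sigma}(v_\tau)(X^x_\tau)\big]$, and integrating in $x$ against $f_X$ produces the stated formula. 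Without this pivot, the Stein kernel cannot enter and the coupling $(\tau_X(X)-1)\,Z\,\mathcal{V}(X_\tau)\,\partial^{\sigma}(v_\tau)(X_\tau)$ cannot be generated, so your argument as written has a genuine gap.
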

\begin{proof} Recall that we have the following representation for the
  Fisher information structure along the smart path:
\begin{align}
J_{st,\gamma}(X_{\tau})=\int_0^{+\infty}u\dfrac{\big(\partial_u\big(P^{(\alpha,\lambda)}_{\tau}(f_X/\gamma_{\alpha,\lambda})\big)\big)^2}{P^{(\alpha,\lambda)}_{\tau}(f_X/\gamma_{\alpha,\lambda})}\gamma_{\alpha,\lambda}(u)du.\label{RepFisher}
\end{align}
Since the Laguerre generator is a diffusion, it satisfies the following integration by parts formula on smooth functions $\phi,\psi$ from $(0,+\infty)$ to $\mathbb{R}$:
\begin{align*}
\int_{0}^{+\infty}\phi(u)\mathcal{L}_{\alpha,\lambda}(\psi)(u)\gamma_{\alpha,\lambda}(u)du=-\int_{0}^{+\infty}\partial_u^{\sigma}(\phi)\partial_u^{\sigma}(\psi)(u)\gamma_{\alpha,\lambda}(u)du.
\end{align*}
Thus, we have:
\begin{align*}
J_{st,\gamma}(X_{\tau})&=-\int_0^{+\infty}\mathcal{L}_{\alpha,\lambda}\big(P^{(\alpha,\lambda)}_{\tau}(v_\tau)\big)(x)f_X(x)dx,\\
&=-\int_0^{+\infty}\big[\big(\partial_x^{\sigma}\big)^2(P^{(\alpha,\lambda)}_{\tau}(v_\tau))(x)+(\alpha-\frac{1}{2}-\lambda x)\partial_x(P^{(\alpha,\lambda)}_{\tau}(v_\tau))(x)\big]f_X(x)dx,\\
&=-\int_0^{+\infty}\big[1-\tau_X(x)\big]\big(\partial_x^{\sigma}\big)^2(P^{(\alpha,\lambda)}_{\tau}(v_\tau))(x)f_X(x)dx,
\end{align*}
where we have used the definition of the Stein kernel on the function $\partial_x(P^{(\alpha,\lambda)}_{\tau}(v_\tau))$. Now, thanks to Bismut formula (\ref{eq-Bismut}), we have:
\begin{align*}
\partial_x^{\sigma}(P^{(\alpha,\lambda)}_{\tau}(v_\tau))(x)=\sqrt{\frac{\lambda}{2}}\sqrt{\dfrac{\tau}{1-\tau}}\mathbb{E}\big[Zv_\tau(X^x_\tau)\big].
\end{align*}
Moreover, using the intertwining relation (\ref{eq:1}), we obtain:
\begin{align*}
\big(\partial_x^{\sigma}\big)^2(P^{(\alpha,\lambda)}_{\tau}(v_\tau))(x)=\sqrt{\frac{\lambda}{2}}\dfrac{\tau}{\sqrt{1-\tau}}\mathbb{E}\bigg[Z\dfrac{\big(\sqrt{\tau}\sqrt{x}+\sqrt{\frac{1-\tau}{2\lambda}}Z\big)}{(X^x_\tau)^{\frac{1}{2}}}\partial^\sigma(v_\tau)(X^x_\tau)\bigg].
\end{align*}
The conclusion follows by integrating out with respect to the law of $X$.
\end{proof}
\noindent
We are now ready to state the main result of this section.
\begin{theo}\label{HSI}
Let $\alpha\geq 1/2$. Let $X$ be a strictly positive random variable with finite $\alpha+4$ moments and such that $J_{st,\gamma}(X)<+\infty$, $\mathbb{E}[(\tau_X(X)-1)^2]<+\infty$ and $\mathbb{E}[X]=\alpha/\lambda$. We have:
\begin{align*}
D(X\|\gamma_{\alpha,\lambda})\leq \frac{1}{2}\mathbb{E}[(\tau_X(X)-1)^2]\log\bigg[1+\frac{2}{\lambda}\dfrac{J_{st,\gamma}(X)}{\mathbb{E}[(\tau_X(X)-1)^2]}\bigg].
\end{align*}
\end{theo}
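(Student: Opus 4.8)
The plan is to feed two complementary upper bounds on the Fisher information $J_{st,\gamma}(X_\tau)$ along the smart path into the integrated De Bruijn identity of Theorem \ref{sec:de-bruijn-formula} and then to optimize the resulting one-dimensional integral. Throughout set $J=J_{st,\gamma}(X)$ and $S=\mathbb{E}[(\tau_X(X)-1)^2]$ (which, under $\mathbb{E}[X]=\alpha/\lambda$, is the Stein discrepancy since $\mathbb{E}[\tau_X(X)]=1$). The first bound is the sub-commutation estimate $J_{st,\gamma}(X_\tau)\le\tau J$ obtained along the way to Proposition \ref{LSI}. The second, which is sharper for $\tau$ near $0$, is the genuinely new input and rests on the representation of Proposition \ref{FisherRep}.

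To get the second bound I would start from
\begin{align*}
J_{st,\gamma}(X_{\tau})=\sqrt{\frac{\lambda}{2}}\dfrac{\tau}{\sqrt{1-\tau}}\mathbb{E}\big[(\tau_X(X)-1)\,Z\,\mathcal{V}(X_\tau)\,\partial^{\sigma}(v_\tau)(X_\tau)\big]
\end{align*}
and apply Cauchy--Schwarz, grouping $(\tau_X(X)-1)Z\,\mathcal{V}(X_\tau)$ against $\partial^{\sigma}(v_\tau)(X_\tau)$. For the first factor, $|\mathcal{V}(X_\tau)|\le 1$ is exactly \eqref{eq:2}, and since $Z$ is independent of $X$ while $\tau_X(X)$ is a function of $X$ alone, conditioning on $X$ gives $\mathbb{E}[(\tau_X(X)-1)^2Z^2\mathcal{V}(X_\tau)^2]\le\mathbb{E}[(\tau_X(X)-1)^2]=S$. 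For the second factor, since $v_\tau=\log P^{(\alpha,\lambda)}_\tau(f_X/\gamma_{\alpha,\lambda})$ and $g^{(\alpha,\lambda)}(\tau,\cdot)=P^{(\alpha,\lambda)}_\tau(f_X/\gamma_{\alpha,\lambda})\,\gamma_{\alpha,\lambda}$ is the density of $X_\tau$, formula \eqref{RepFisher} can be rewritten as $J_{st,\gamma}(X_\tau)=\int_0^{+\infty}u\,(\partial_u v_\tau)(u)^2\,g^{(\alpha,\lambda)}(\tau,u)\,du=\mathbb{E}[\partial^{\sigma}(v_\tau)(X_\tau)^2]$. Combining the two estimates yields $J_{st,\gamma}(X_\tau)\le\sqrt{\lambda/2}\,(\tau/\sqrt{1-\tau})\,\sqrt{S}\,\sqrt{J_{st,\gamma}(X_\tau)}$; dividing by $\sqrt{J_{st,\gamma}(X_\tau)}$ (the inequality being vacuous when this vanishes) and squaring gives $J_{st,\gamma}(X_\tau)\le\frac{\lambda}{2}\,\frac{\tau^2}{1-\tau}\,S$.

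With both bounds in hand, Theorem \ref{sec:de-bruijn-formula} gives
\begin{align*}
D(X\|\gamma_{\alpha,\lambda})=\int_0^1\frac{1}{\lambda\tau}J_{st,\gamma}(X_\tau)\,d\tau\le\int_0^1\min\Big(\frac{J}{\lambda},\,\frac{\tau}{2(1-\tau)}S\Big)\,d\tau.
\end{align*}
The two arguments of the minimum coincide at $\tau^{\ast}=2J/(\lambda S+2J)$, the second being the smaller on $(0,\tau^{\ast})$ and the first on $(\tau^{\ast},1)$. Splitting the integral at $\tau^{\ast}$ and using $\int_0^{\tau^{\ast}}\frac{\tau}{2(1-\tau)}\,d\tau=\tfrac12\big(-\log(1-\tau^{\ast})-\tau^{\ast}\big)$, $\int_{\tau^{\ast}}^1\frac{J}{\lambda}\,d\tau=\frac{J}{\lambda}(1-\tau^{\ast})$ and $1-\tau^{\ast}=\lambda S/(\lambda S+2J)$, one finds that the terms not containing the logarithm cancel identically, $-\tfrac{S}{2}\tau^{\ast}+\tfrac{J}{\lambda}(1-\tau^{\ast})=0$, so that $D(X\|\gamma_{\alpha,\lambda})\le\tfrac{S}{2}\log\big(1+\tfrac{2J}{\lambda S}\big)$, which is the asserted inequality.

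The hard part will be the second bound, or rather securing the analytic identities behind Proposition \ref{FisherRep} (the integration by parts against $\mathcal{L}_{\alpha,\lambda}$, the intertwining \eqref{eq:1}, and the Bismut formula \eqref{eq-Bismut}), which are phrased under a loose ``$f_X$ smooth enough'' proviso: under the sole moment and finiteness hypotheses of the statement one should first prove the inequality for a smooth, rapidly decaying density approximating $\mathbb{P}_X$ and then pass to the limit, using lower semicontinuity of relative entropy together with the stability of $J_{st,\gamma}$ and of the Stein discrepancy under the approximation. The degenerate case $S=0$, where $X\sim\gamma_{\alpha,\lambda}$ and hence $D=J=0$, is covered by reading the right-hand side as its limiting value $0$.
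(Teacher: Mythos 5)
Your proposal is correct and follows essentially the same route as the paper: the integrated De Bruijn identity, the sub-commutation bound $J_{st,\gamma}(X_\tau)\le\tau J_{st,\gamma}(X)$ for $\tau$ near $1$, and the bound $J_{st,\gamma}(X_\tau)\le\frac{\lambda}{2}\frac{\tau^2}{1-\tau}\mathbb{E}[(\tau_X(X)-1)^2]$ obtained from Proposition \ref{FisherRep}, \eqref{eq:2} and Cauchy--Schwarz together with $\mathbb{E}[|\partial^{\sigma}(v_\tau)(X_\tau)|^2]=J_{st,\gamma}(X_\tau)$. Integrating the pointwise minimum and splitting at $\tau^{*}=2J/(\lambda S+2J)$ is just the explicit form of the paper's final ``optimisation in $u$'', and your remarks on the smoothness proviso and the case $S=0$ are sensible additions.
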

\begin{rem}
\begin{itemize}
\item This inequality should be compared to the one of Proposition $4.3$ in \cite{LNP}. In particular, the Stein kernel used in the definition of the Stein discrepancy is different from the one we use here. In contrast to the proof of this result, we do not use the operators $\Gamma_2$ and $\Gamma_3$ from the $\Gamma$-calculus of \cite{BGL}. This should emphasize the strength of stochastic representations such as the ones in Lemmata \ref{MehInter} and \ref{Bismut}. The method of the proof is similar to the one of Theorem $2.2$ of \cite{LNP} in the Gaussian case.
\item Moreover, it is important to note that this functional inequality improves upon the classical logarithmic Sobolev inequality (Proposition \ref{LSI}) as in the Gaussian case.
\end{itemize}
\end{rem}
\begin{proof}
From De Bruijn identity of Theorem \ref{sec:de-bruijn-formula}, we have:
\begin{align*}
D(X\|\gamma_{\alpha,\lambda}) &= \int_0^1 \frac{1}{\lambda \tau}J_{st,\gamma}(X_{\tau}) d \tau,\\
&= \int_0^u \frac{1}{\lambda \tau}J_{st,\gamma}(X_{\tau}) d \tau+\int_u^1 \frac{1}{\lambda \tau}J_{st,\gamma}(X_{\tau}) d \tau.
\end{align*}
For times closed to 1, we use the following bound (from the proof of Proposition \ref{LSI}):
\begin{align}
\int_u^1 \frac{1}{\lambda \tau}J_{st,\gamma}(X_{\tau}) d \tau\leq \int_u^1 \frac{\tau}{\lambda \tau}J_{st,\gamma}(X) d \tau\leq \frac{1}{\lambda}J_{st,\gamma}(X)(1-u).\label{eq-Term1}
\end{align}
Moreover, thanks to Proposition \ref{FisherRep}, (\ref{eq:2}), and Cauchy-Schwarz inequality, we have:
\begin{align*}
J_{st,\gamma}(X_{\tau})\leq \sqrt{\frac{\lambda}{2}}\dfrac{\tau}{\sqrt{1-\tau}} \mathbb{E}[(\tau_X(X)-1)^2]^{\frac{1}{2}}\mathbb{E}[\mid\partial^{\sigma}(v_\tau)(X_\tau)\mid^2]^{\frac{1}{2}}.
\end{align*}
But,
\begin{align*}
\mathbb{E}[\mid\partial^{\sigma}(v_\tau)(X_\tau)\mid^2]&=\int_{0}^{+\infty}P^{(\alpha,\lambda)}_\tau(\mid\partial^{\sigma}(v_\tau)\mid^2)(x)f_X(x)dx,\\
&=\int_{0}^{+\infty}\mid\partial^{\sigma}(v_\tau)(u)\mid^2P^{(\alpha,\lambda)}_\tau(f_X/\gamma_{\alpha,\lambda})(u)\gamma_{\alpha,\lambda}(u)du,\\
&=J_{st,\gamma}(X_{\tau}).
\end{align*}
Thus, we have the following bound:
\begin{align*}
J_{st,\gamma}(X_{\tau})\leq \frac{\lambda}{2}\dfrac{\tau^2}{1-\tau} \mathbb{E}[(\tau_X(X)-1)^2],
\end{align*}
which provides the following estimate for small times:
\begin{align*}
\int_0^u \frac{1}{\lambda \tau}J_{st,\gamma}(X_{\tau}) d \tau&\leq \frac{1}{2}\mathbb{E}[(\tau_X(X)-1)^2]\int_0^u \dfrac{\tau}{1-\tau}d \tau,\\
&\leq -\frac{1}{2}\mathbb{E}[(\tau_X(X)-1)^2](u+\log(1-u)).
\end{align*}
The result follows with an optimisation in $u\in(0,1)$.
\end{proof}

%%%%%%%%%%%%%%%%%%%%%%%%%%%%%%%%%%%%%%%%%%%%%%%%%%%%%%%%%%%%%%%%%%%%%%%%%%%%%%%%%%%%%%%%%%%%%%%%%%%%%%%%%%%%%%%%%%

\section{Appendix}\label{sec-Append}

% Reprise de la preuve de Transtech1: faire le calcul explicite et détaillée de la dérivée spatiale de g en justifiant tout et en utilisant l'inégalité
% faire un lemme dans lequel on relie g au semi groupe de Laguerre (uniquement dans le cas particulier où X admet une densité). La preuve doit être courte et se referrer au papier sur Laguerre.

\begin{prop}\label{Transtech1}
Let $\alpha\geq 1/2$. We have for every $(\tau,u)\in (0,1)\times(0,+\infty)$:
\begin{align}
\partial_u\big(g^{(\alpha,\lambda)}\big)(\tau,u)&=\dfrac{\alpha-1}{u}g^{(\alpha,\lambda)}(\tau,u)-\dfrac{\lambda}{1-\tau}g^{(\alpha,\lambda)}(\tau,u)+h^{(\alpha,\lambda)}(\tau,u),\\ 
-\lambda\tau\dfrac{\partial g^{(\alpha,\lambda)}}{\partial \tau}(\tau,u)&=-\dfrac{\partial}{\partial u}\bigg(g^{(\alpha,\lambda)}(\tau,u)u\dfrac{\partial}{\partial u}\log\bigg(\dfrac{g^{(\alpha,\lambda)}(\tau,u)}{\gamma_{\alpha,\lambda}(u)}\bigg)\bigg),\\
&=g^{(\alpha,\lambda)}(\tau,u)\bigg(u\dfrac{\lambda^2\tau}{(1-\tau)^2}-\dfrac{\lambda\alpha\tau}{1-\tau}\bigg)+h^{(\alpha,\lambda)}(\tau,u)\bigg(\alpha-u\lambda\dfrac{1+\tau}{1-\tau}\bigg)\nonumber\\
&+uk^{(\alpha,\lambda)}(\tau,u),
\end{align}
with,
\begin{align*}
h^{(\alpha,\lambda)}(\tau,u)&=\dfrac{\lambda^2}{(1-\tau)^2}\big(\dfrac{1}{\tau}\big)^{\frac{\alpha-2}{2}}u^{\frac{\alpha-2}{2}}\exp\big(-\frac{\lambda}{1-\tau}u\big)\\
&\times\int_0^{+\infty}\big(\dfrac{1}{x}\big)^{\frac{\alpha-2}{2}}\exp\big(-\dfrac{\lambda\tau}{1-\tau}x\big)I_{\alpha}\big(\dfrac{2\lambda\sqrt{ux\tau}}{1-\tau}\big)\mathbb{P}_X(dx),\\
k^{(\alpha,\lambda)}(\tau,u)&=\dfrac{\lambda^3}{(1-\tau)^3}\big(\dfrac{1}{\tau}\big)^{\frac{\alpha-3}{2}}u^{\frac{\alpha-3}{2}}\exp\big(-\frac{\lambda}{1-\tau}u\big)\\
&\times\int_0^{+\infty}\big(\dfrac{1}{x}\big)^{\frac{\alpha-3}{2}}\exp\big(-\dfrac{\lambda\tau}{1-\tau}x\big)I_{\alpha+1}\big(\dfrac{2\lambda\sqrt{ux\tau}}{1-\tau}\big)\mathbb{P}_X(dx).
\end{align*}
\end{prop}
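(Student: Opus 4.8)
The plan is to obtain all three displays by direct differentiation of the closed form \eqref{den} for the density $g^{(\alpha,\lambda)}(\tau,\cdot)$, the only external ingredients being the two classical recursions for modified Bessel functions, $\frac{d}{dz}\big(z^{-\nu}I_\nu(z)\big)=z^{-\nu}I_{\nu+1}(z)$ and $I_\nu'(z)=I_{\nu+1}(z)+\frac{\nu}{z}I_\nu(z)$, used under the integral sign $\int\cdot\,\mathbb{P}_X(dx)$. First I would observe that $g^{(\alpha,\lambda)}$, $h^{(\alpha,\lambda)}$, $k^{(\alpha,\lambda)}$ are three consecutive members of one family: in \eqref{den}, replacing the Bessel index $\alpha-1$ by $\alpha-1+j$, the exponent $(\alpha-1)/2$ occurring in $(u/\tau)^{(\alpha-1)/2}$ and in $(1/x)^{(\alpha-1)/2}$ by $(\alpha-1-j)/2$, and the scalar prefactor $\lambda/(1-\tau)$ by $(\lambda/(1-\tau))^{j+1}$, produces $g$, $h$, $k$ for $j=0,1,2$. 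The computational core, proved once, is that differentiating the $j$-th member in $u$ produces the $(j{+}1)$-th member as the leftover Bessel term.

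Concretely, for the first identity I differentiate \eqref{den} in $u$: the factor $\exp(-\lambda u/(1-\tau))$ yields $-\frac{\lambda}{1-\tau}g^{(\alpha,\lambda)}$, while for the $u$-dependent block one has, with $c=2\lambda\sqrt\tau/(1-\tau)$, the identity $\frac{d}{du}\big(u^{(\alpha-1)/2}I_{\alpha-1}(c\sqrt{ux})\big)=\frac{\alpha-1}{u}u^{(\alpha-1)/2}I_{\alpha-1}(c\sqrt{ux})+\frac{c\sqrt x}{2}u^{(\alpha-2)/2}I_{\alpha}(c\sqrt{ux})$ (itself a one-line consequence of $I_{\alpha-1}'=I_\alpha+\frac{\alpha-1}{z}I_{\alpha-1}$), and after reabsorbing the scalars ($\frac c2=\lambda\sqrt\tau/(1-\tau)$, $\sqrt x\cdot x^{-(\alpha-1)/2}=x^{-(\alpha-2)/2}$, $\sqrt\tau\cdot\tau^{-(\alpha-1)/2}=\tau^{-(\alpha-2)/2}$) the $u$-integral becomes $\frac{\alpha-1}{u}g^{(\alpha,\lambda)}+h^{(\alpha,\lambda)}$. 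This gives $\partial_u g^{(\alpha,\lambda)}=\frac{\alpha-1}{u}g^{(\alpha,\lambda)}-\frac{\lambda}{1-\tau}g^{(\alpha,\lambda)}+h^{(\alpha,\lambda)}$, and the same computation one notch up the family yields $\partial_u h^{(\alpha,\lambda)}=\frac{\alpha-1}{u}h^{(\alpha,\lambda)}-\frac{\lambda}{1-\tau}h^{(\alpha,\lambda)}+k^{(\alpha,\lambda)}$.

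For the remaining display I first simplify the flux: since $\gamma_{\alpha,\lambda}'/\gamma_{\alpha,\lambda}=(\alpha-1)/u-\lambda$, one has $g^{(\alpha,\lambda)}u\,\partial_u\log(g^{(\alpha,\lambda)}/\gamma_{\alpha,\lambda})=u\,\partial_u g^{(\alpha,\lambda)}-(\alpha-1)g^{(\alpha,\lambda)}+\lambda u\,g^{(\alpha,\lambda)}$, and inserting the first identity collapses this to $u\,h^{(\alpha,\lambda)}-\frac{\lambda\tau}{1-\tau}u\,g^{(\alpha,\lambda)}$. Differentiating once more in $u$ and substituting $\partial_u g^{(\alpha,\lambda)}$ and $\partial_u h^{(\alpha,\lambda)}$ from the previous step, the $g$-, $h$- and $k$-parts regroup into exactly $g^{(\alpha,\lambda)}\big(u\frac{\lambda^2\tau}{(1-\tau)^2}-\frac{\lambda\alpha\tau}{1-\tau}\big)+h^{(\alpha,\lambda)}\big(\alpha-u\lambda\frac{1+\tau}{1-\tau}\big)+u\,k^{(\alpha,\lambda)}$, the asserted right-hand side. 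It then remains to identify this with $-\lambda\tau\,\partial_\tau g^{(\alpha,\lambda)}$, i.e.\ that $g^{(\alpha,\lambda)}$ solves $-\lambda\tau\,\partial_\tau g^{(\alpha,\lambda)}=\partial_u\big(u\,g^{(\alpha,\lambda)}\,\partial_u\log(g^{(\alpha,\lambda)}/\gamma_{\alpha,\lambda})\big)$, the time-changed Fokker--Planck equation for the Laguerre dynamics. When $\mathbb{P}_X$ has a density this is immediate from $g^{(\alpha,\lambda)}(\tau,\cdot)=\gamma_{\alpha,\lambda}P^{(\alpha,\lambda)}_{-\log\tau/\lambda}(f_X/\gamma_{\alpha,\lambda})$, the heat equation $\partial_t P^{(\alpha,\lambda)}_t\varphi=\mathcal{L}_{\alpha,\lambda}P^{(\alpha,\lambda)}_t\varphi$, the divergence form $\gamma_{\alpha,\lambda}\mathcal{L}_{\alpha,\lambda}\varphi=\partial_u(u\gamma_{\alpha,\lambda}\varphi')$, and the chain rule for $t=-\log\tau/\lambda$; for a general $\mathbb{P}_X$ I would instead differentiate \eqref{den} directly in $\tau$, applying the chain rule to the four $\tau$-dependent factors $\lambda\tau^{-(\alpha-1)/2}/(1-\tau)$, $\exp(-\lambda u/(1-\tau))$, $\exp(-\lambda\tau x/(1-\tau))$ and $c(\tau)=2\lambda\sqrt\tau/(1-\tau)$, the last one handled with $I_{\alpha-1}'=I_\alpha+\frac{\alpha-1}{z}I_{\alpha-1}$ followed by one further Bessel step on the $I_\alpha$-term, and matching coefficients.

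The hard part is not any single algebraic identity but the analytic justification: one must legitimately interchange $\partial_u$ and $\partial_\tau$ with the integral against $\mathbb{P}_X$, locally uniformly in $(\tau,u)\in(0,1)\times(0,\infty)$. The large-$x$ regime is tamed by $\exp(-\lambda\tau x/(1-\tau))$ together with $I_\nu(z)\lesssim e^{z}$, while the regimes $x\downarrow0$ and $u\downarrow0$ are handled by $I_\nu(z)\sim(z/2)^\nu/\Gamma(\nu+1)$; it is in keeping these boundary bounds --- and the analogous ones for $h^{(\alpha,\lambda)}$ and $k^{(\alpha,\lambda)}$, whose Bessel orders are $\alpha$ and $\alpha+1$ --- simultaneously integrable in $x$ that the standing hypothesis $\alpha\geq 1/2$ is used. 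The direct $\tau$-differentiation is the more bookkeeping-heavy of the two routes and is where I would expect most of the effort to go if one insists on an argument free of any density assumption on $\mathbb{P}_X$.
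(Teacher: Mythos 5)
Your proposal is correct and takes essentially the same route as the paper: differentiate the explicit representation \eqref{den} under the integral sign using $I_{\alpha-1}'(z)=I_{\alpha}(z)+\frac{\alpha-1}{z}I_{\alpha-1}(z)$, justify the interchange by a $\cosh$-type domination of the Bessel factor (valid for $\alpha\geq 1/2$, the case $\alpha=1/2$ via the explicit form of $I_{-1/2}$), and get the $\tau$-derivative identities by the analogous computation, which the paper compresses into ``by a similar reasoning''. A useful by-product of your algebra is that it gives $-\lambda\tau\,\partial_\tau g^{(\alpha,\lambda)}=+\,\partial_u\big(u\,g^{(\alpha,\lambda)}\,\partial_u\log\big(g^{(\alpha,\lambda)}/\gamma_{\alpha,\lambda}\big)\big)$, which is the sign actually used later in the proof of Theorem \ref{DeBruijn2} and matches the third display, so the minus sign in front of $\partial_u$ in the second display of the statement is a typo that your computation silently corrects.
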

\begin{proof}
We begin by computing the first partial derivative of $g^{(\alpha,\lambda)}(.,.)$ with respect to $u$. In order to do so, we need to justify properly the interchange of derivative and integral since:
\begin{align*}
g^{(\alpha,\lambda)}(\tau,u)=\dfrac{\lambda}{1-\tau}\bigg(\dfrac{u}{\tau}\bigg)^{\frac{\alpha-1}{2}}\exp\bigg(-\dfrac{\lambda u}{1-\tau}\bigg)\int_0^{+\infty}(\frac{1}{x})^{\frac{\alpha-1}{2}}\exp\bigg(-\dfrac{\lambda\tau x}{1-\tau}\bigg)I_{\alpha-1}\bigg(\dfrac{2\lambda\sqrt{ux\tau}}{1-\tau}\bigg)\mathbb{P}_X(dx).
\end{align*}
Let $\operatorname{K}=[a,b]$ be a compact set strictly contained in $(0,+\infty)$. We need to control uniformly the following quantity for $u\in\operatorname{K}$:
\begin{align*}
(I)=\vert(\frac{1}{x})^{\frac{\alpha-1}{2}}\exp\bigg(-\dfrac{\lambda\tau x}{1-\tau}\bigg)\partial_u\big(I_{\alpha-1}\bigg(\dfrac{2\lambda\sqrt{ux\tau}}{1-\tau}\bigg)\big)\vert
\end{align*}
Using the fact that $I^{(1)}_{\alpha-1}(z)=I_{\alpha}(z)+I_{\alpha-1}(z)(\alpha-1)/z$, we obtain the following straightforward bound:
\begin{align*}
(I)&\leq (\frac{1}{x})^{\frac{\alpha-1}{2}}\exp\bigg(-\dfrac{\lambda\tau x}{1-\tau}\bigg)\dfrac{\lambda\sqrt{x\tau}}{(1-\tau)\sqrt{u}}\{I_{\alpha}\bigg(\dfrac{2\lambda\sqrt{ux\tau}}{1-\tau}\bigg)\\
&+\dfrac{\vert\alpha-1\vert(1-\tau)}{2\lambda\sqrt{ux\tau}}I_{\alpha-1}\bigg(\dfrac{2\lambda\sqrt{ux\tau}}{1-\tau}\bigg)\},\\
&\leq C_{\operatorname{K},\tau,\alpha,\lambda}\{(\frac{1}{x})^{\frac{\alpha-2}{2}}\exp\bigg(-\dfrac{\lambda\tau x}{1-\tau}\bigg)I_{\alpha}\bigg(\dfrac{2\lambda\sqrt{ux\tau}}{1-\tau}\bigg)\\
&+(\frac{1}{x})^{\frac{\alpha-1}{2}}\exp\bigg(-\dfrac{\lambda\tau x}{1-\tau}\bigg)I_{\alpha-1}\bigg(\dfrac{2\lambda\sqrt{ux\tau}}{1-\tau}\bigg)\}.
\end{align*}
In order to deal with the terms involving the modified Bessel functions of the first kind, we use the right-hand side of inequality $(6.25)$ from \cite{INEQ} which holds for $\alpha>1/2$. We obtain:
\begin{align*}
(I)&\leq C'_{\operatorname{K},\tau,\alpha,\lambda}\{x\exp\bigg(-\dfrac{\lambda\tau x}{1-\tau}\bigg)\cosh\bigg(\dfrac{2\lambda\sqrt{ux\tau}}{1-\tau}\bigg)+\exp\bigg(-\dfrac{\lambda\tau x}{1-\tau}\bigg)\cosh\bigg(\dfrac{2\lambda\sqrt{ux\tau}}{1-\tau}\bigg)\},\\
&\leq C'_{\operatorname{K},\tau,\alpha,\lambda}\{x\exp\bigg(-\dfrac{\lambda\tau x}{1-\tau}\bigg)\cosh\bigg(\dfrac{2\lambda\sqrt{bx\tau}}{1-\tau}\bigg)+\exp\bigg(-\dfrac{\lambda\tau x}{1-\tau}\bigg)\cosh\bigg(\dfrac{2\lambda\sqrt{bx\tau}}{1-\tau}\bigg)\}.
\end{align*}
The previous bound is clearly integrable with respect to the probability measure $\mathbb{P}_X$ on $(0,+\infty)$. We deal with the case $\alpha=1/2$ using the explicit expression for the modified Bessel functions of the first kind of order $-1/2$. We can thus perform the computations for the first partial derivative of $g^{(\alpha,\lambda)}(.,.)$ with respect to $u$. We have:
\begin{align*}
\partial_u(g^{(\alpha,\lambda)})(\tau,u)&=\dfrac{\lambda}{1-\tau}\big(\dfrac{1}{\tau}\big)^{\frac{\alpha-1}{2}}(\frac{\alpha-1}{2})u^{\frac{\alpha-1}{2}-1}\exp\big(-\frac{\lambda}{1-\tau}u\big)\\
&\times\int_{0}^{+\infty}\big(\dfrac{1}{x}\big)^{\frac{\alpha-1}{2}}\exp\big(-\dfrac{\lambda\tau}{1-\tau}x\big)I_{\alpha-1}\big(\dfrac{2\lambda\sqrt{ux\tau}}{1-\tau}\big)\mathbb{P}_X(dx)\\
&-\dfrac{\lambda}{1-\tau}\dfrac{\lambda}{1-\tau}\big(\dfrac{1}{\tau}\big)^{\frac{\alpha-1}{2}}u^{\frac{\alpha-1}{2}}\exp\big(-\frac{\lambda}{1-\tau}u\big)\\
&\times\int_{0}^{+\infty}\big(\dfrac{1}{x}\big)^{\frac{\alpha-1}{2}}\exp\big(-\dfrac{\lambda\tau}{1-\tau}x\big)I_{\alpha-1}\big(\dfrac{2\lambda\sqrt{ux\tau}}{1-\tau}\big)\mathbb{P}_X(dx)\\
&+\dfrac{\lambda}{1-\tau}\big(\dfrac{1}{\tau}\big)^{\frac{\alpha-1}{2}}u^{\frac{\alpha-1}{2}}\exp\big(-\frac{\lambda}{1-\tau}u\big)\\
&\times\int_{0}^{+\infty}\big(\dfrac{1}{x}\big)^{\frac{\alpha-1}{2}}\exp\big(-\dfrac{\lambda\tau}{1-\tau}x\big)\dfrac{2\lambda\sqrt{x\tau}}{1-\tau}\dfrac{1}{2\sqrt{u}}I^{(1)}_{\alpha-1}\big(\dfrac{2\lambda\sqrt{ux\tau}}{1-\tau}\big)\mathbb{P}_X(dx),\\
&=\frac{\alpha-1}{2u}g^{(\alpha,\lambda)}(\tau,u)-\frac{\lambda}{1-\tau}g^{(\alpha,\lambda)}(\tau,u)\\
&+\dfrac{\alpha-1}{2u}\dfrac{\lambda}{1-\tau}\big(\dfrac{1}{\tau}\big)^{\frac{\alpha-1}{2}}u^{\frac{\alpha-1}{2}}\exp\big(-\frac{\lambda}{1-\tau}u\big)\\
&\times\int_0^{+\infty}\big(\dfrac{1}{x}\big)^{\frac{\alpha-1}{2}}\exp\big(-\dfrac{\lambda\tau}{1-\tau}x\big)I_{\alpha-1}\big(\dfrac{2\lambda\sqrt{ux\tau}}{1-\tau}\big)\mathbb{P}_X(dx)\\
&+\dfrac{\lambda^2}{(1-\tau)^2}\big(\dfrac{1}{\tau}\big)^{\frac{\alpha-2}{2}}u^{\frac{\alpha-2}{2}}\exp\big(-\frac{\lambda}{1-\tau}u\big)\\
&\times\int_0^{+\infty}\big(\dfrac{1}{x}\big)^{\frac{\alpha-2}{2}}\exp\big(-\dfrac{\lambda\tau}{1-\tau}x\big)I_{\alpha}\big(\dfrac{2\lambda\sqrt{ux\tau}}{1-\tau}\big)\mathbb{P}_X(dx).
\end{align*}
where we have used the fact that $I^{(1)}_{\alpha-1}(z)=I_{\alpha}(z)+I_{\alpha-1}(z)(\alpha-1)/z$. Therefore, we have:
\begin{align*}
\partial_u\big(g^{(\alpha,\lambda)}\big)(\tau,u)=\dfrac{\alpha-1}{u}g^{(\alpha,\lambda)}(\tau,u)-\dfrac{\lambda}{1-\tau}g^{(\alpha,\lambda)}(\tau,u)+h^{(\alpha,\lambda)}(\tau,u).
\end{align*}
By a similar reasoning, we obtain relations $(16)$ and $(17)$.
\end{proof}

\begin{lem}\label{Asym0}
Let $\alpha>0$. For $\tau\in(0,1)$, we have:
\begin{align*}
g^{(\alpha,\lambda)}(\tau,u)\underset{u\rightarrow 0^+}{\sim} C_{\tau,\alpha,\lambda}u^{\alpha-1},
\end{align*}
where $C_{\tau,\alpha,\lambda}$ is some strictly positive constant.
\end{lem}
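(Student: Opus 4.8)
The plan is to extract the expected $u^{\alpha-1}$ factor directly from the integral representation \eqref{den} of the density and then pass to the limit inside the $\mathbb{P}_X$-integral by dominated convergence, the outcome being an explicit positive constant expressed through the Laplace transform of $\mathbb{P}_X$.

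First I would rewrite, using \eqref{den},
\[
\frac{g^{(\alpha,\lambda)}(\tau,u)}{u^{\alpha-1}}=\frac{\lambda}{1-\tau}\,\tau^{-\frac{\alpha-1}{2}}\exp\!\Big(-\frac{\lambda u}{1-\tau}\Big)\int_0^{+\infty}\Big[u^{-\frac{\alpha-1}{2}}x^{-\frac{\alpha-1}{2}}I_{\alpha-1}\!\Big(\frac{2\lambda\sqrt{ux\tau}}{1-\tau}\Big)\Big]\exp\!\Big(-\frac{\lambda\tau x}{1-\tau}\Big)\mathbb{P}_X(dx).
\]
Since $\exp(-\lambda u/(1-\tau))\to1$ as $u\to0^+$, the whole problem reduces to computing the limit of the integral. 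The computational heart of the argument is the elementary power-series identity, valid for every $\alpha>0$,
\[
u^{-\frac{\alpha-1}{2}}x^{-\frac{\alpha-1}{2}}I_{\alpha-1}\!\Big(\frac{2\lambda\sqrt{ux\tau}}{1-\tau}\Big)=\Big(\frac{\lambda\sqrt{\tau}}{1-\tau}\Big)^{\alpha-1}\sum_{n\ge0}\frac{1}{n!\,\Gamma(n+\alpha)}\Big(\frac{\lambda^2\tau}{(1-\tau)^2}\Big)^{n}(ux)^{n},
\]
obtained by expanding $I_{\alpha-1}$ in its defining series: the half-integer powers of $u$ and $x$ cancel against the leading factor, leaving a series with nonnegative coefficients that is nondecreasing in $u$ and whose pointwise limit as $u\to0^+$ is the $n=0$ term $\Gamma(\alpha)^{-1}(\lambda\sqrt\tau/(1-\tau))^{\alpha-1}$ (this is nothing but the classical small-argument asymptotic $I_{\alpha-1}(z)\sim\Gamma(\alpha)^{-1}(z/2)^{\alpha-1}$, made uniform in $x$).

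Next I would justify the passage to the limit under the integral. For $0<u\le1$ the bracketed term above is bounded by its value at $u=1$, and
\[
x\longmapsto\Big(\sum_{n\ge0}\frac{1}{n!\,\Gamma(n+\alpha)}\Big(\frac{\lambda^2\tau}{(1-\tau)^2}\Big)^{n}x^{n}\Big)\exp\!\Big(-\frac{\lambda\tau x}{1-\tau}\Big)
\]
is bounded on $(0,+\infty)$: the series is, up to a power of $x$, a modified Bessel function, hence grows only like $\exp\!\big(2\lambda\sqrt{\tau x}/(1-\tau)\big)$, which is absorbed by the exponential decay. Being bounded, this function is $\mathbb{P}_X$-integrable, so dominated convergence applies and yields
\[
\lim_{u\to0^+}\int_0^{+\infty}\!\!\big(\cdots\big)\mathbb{P}_X(dx)=\frac{1}{\Gamma(\alpha)}\Big(\frac{\lambda\sqrt{\tau}}{1-\tau}\Big)^{\alpha-1}\,\mathbb{E}\!\Big[\exp\!\Big(-\frac{\lambda\tau X}{1-\tau}\Big)\Big]=\frac{1}{\Gamma(\alpha)}\Big(\frac{\lambda\sqrt{\tau}}{1-\tau}\Big)^{\alpha-1}L(\mathbb{P}_X)\!\Big(\frac{\lambda\tau}{1-\tau}\Big),
\]
which is strictly positive because $X>0$ a.s. Collecting the prefactors then gives $g^{(\alpha,\lambda)}(\tau,u)/u^{\alpha-1}\to C_{\tau,\alpha,\lambda}$ with $C_{\tau,\alpha,\lambda}=\dfrac{\lambda^{\alpha}}{(1-\tau)^{\alpha}\,\Gamma(\alpha)}\,L(\mathbb{P}_X)\!\big(\tfrac{\lambda\tau}{1-\tau}\big)>0$, as claimed.

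The only delicate point is the interchange of limit and integral, i.e.\ exhibiting a dominating function uniform for small $u$; I expect this to be entirely routine once the series monotonicity is noted, and it has the virtue of being valid for all $\alpha>0$, without appealing to the Bessel-function inequalities of \cite{INEQ} used elsewhere in this Appendix.
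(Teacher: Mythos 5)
Your argument is correct, and it follows a genuinely different route from the paper. You work directly on the density representation \eqref{den}: expanding $I_{\alpha-1}$ in its power series, the half-integer powers of $u$ and $x$ cancel, the resulting series is nondecreasing in $u$, and a dominated convergence argument in the $\mathbb{P}_X$-integral (your dominating function is indeed bounded, since $\exp\big(2\lambda\sqrt{\tau x}/(1-\tau)-\lambda\tau x/(1-\tau)\big)\leq \exp\big(\lambda/(1-\tau)\big)$) yields the limit of $g^{(\alpha,\lambda)}(\tau,u)/u^{\alpha-1}$ with the explicit constant $C_{\tau,\alpha,\lambda}=\lambda^{\alpha}(1-\tau)^{-\alpha}\Gamma(\alpha)^{-1}L(\mathbb{P}_X)\big(\lambda\tau/(1-\tau)\big)$. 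The paper instead argues through the Laplace transform: it computes the large-$\mu$ asymptotics of $L(g^{(\alpha,\lambda)}(\tau,\cdot))(\mu)$, invokes a classical Tauberian theorem of Feller to get the small-$u$ behaviour of the distribution function, and then upgrades this to the density via a monotone-density step, checking that $g^{(\alpha,\lambda)}(\tau,\cdot)$ is monotone in a right neighborhood of $0$ by the small-argument asymptotics of $I_{\alpha-1}$; it arrives at the same constant, since $\alpha/\Gamma(\alpha+1)=1/\Gamma(\alpha)$. Your approach is more elementary and self-contained: it avoids the Tauberian machinery and the somewhat delicate monotonicity verification (which in the paper is only sketched through the conditional densities), at the modest cost of the series manipulation and the explicit domination estimate, and like the paper's proof it is valid for every $\alpha>0$.
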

\begin{proof}
Let $\mu>0$. The Laplace transform
of $g^{(\alpha,\lambda)}(\tau,.)$ is given by: 
\begin{align*}
L(g^{(\alpha,\lambda)}(\tau,.))(\mu)=\dfrac{1}{\big(1+\frac{\mu}{\lambda}(1-\tau)\big)^{\alpha}}L\big(\mathbb{P}_X\big)\big(\dfrac{\mu \tau}{1+\frac{\mu}{\lambda}(1-\tau)}\big).
\end{align*}
Since $L(\mathbb{P}_X)$ is in $C^{0}\big(]0,+\infty[\big)$, we have:
\begin{align*}
L(g^{(\alpha,\lambda)}(\tau,.))(\mu)\underset{\mu\rightarrow+\infty}{\sim}\dfrac{\lambda^{\alpha}}{(1-\tau)^{\alpha}}L\big(\mathbb{P}_X\big)\big(\dfrac{\lambda\tau}{1-\tau}\big)\dfrac{1}{\mu^{\alpha}}.
\end{align*}
Thus, by a classical Tauberian Theorem (see Chapter XIII.5, Theorem $3$ of \cite{Feller2}), we have:
\begin{align*}
g^{(\alpha,\lambda)}(\tau,u)\underset{u\rightarrow 0^+}{\sim}\dfrac{\lambda^{\alpha}}{(1-\tau)^{\alpha}}L\big(\mathbb{P}_X\big)\big(\dfrac{\lambda\tau}{1-\tau}\big)\dfrac{u^{\alpha}}{\Gamma(\alpha+1)},
\end{align*}
where $g^{(\alpha,\lambda)}(\tau,.)$ is the cumulative distribution function of $X_\tau$. In order to conclude, we need to know if $g^{\alpha,\lambda}(\tau,.)$ is monotone in a right neighborhood of $0$. Note that monotony properties of $g^{\alpha,\lambda}(\tau,.)$ can be deduced from those of $g^{(\alpha,\lambda)}(\tau,x,.)$. Moreover, since $I_{\nu}(z)\underset{0^+}{\sim}(z/2)^{\nu}1/\Gamma(\nu+1)$, we have:
\begin{align*}
g^{(\alpha,\lambda)}(\tau,x,u)\underset{u\rightarrow 0^+}{\sim}C_{\tau,\alpha,\lambda,x}u^{\alpha-1}\exp\big(\dfrac{-\lambda u}{1-\tau}\big),
\end{align*}
for some constant $C_{\tau,\alpha,\lambda,x}>0$. This implies that
$g^{\alpha,\lambda}(\tau,.)$ is monotone in a right neighborhood
of $0$. Thus,
\begin{align*}
g^{\alpha,\lambda}(\tau,u)\underset{u\rightarrow 0^+}{\sim}\dfrac{\alpha\lambda^{\alpha}}{(1-\tau)^{\alpha}}L\big(\mathbb{P}_X\big)\big(\dfrac{\lambda\tau}{1-\tau}\big)\dfrac{u^{\alpha-1}}{\Gamma(\alpha+1)},
\end{align*}
which concludes the proof.
\end{proof}
\begin{lem}\label{tech2}
Let $\alpha\geq 1/2$. Let $X$ be an almost surely positive random variable with first and second moments finite. Then, we have, for every $(\tau,u)\in(a,b)\times(0,+\infty)$:
\begin{align*}
&g^{(\alpha,\lambda)}(\tau,u)\leq C^1_{a,b,\lambda,\alpha}\bigg(u^{\alpha-1}\exp\big(-\lambda \frac{u}{4(1-a)}\big)+u^{\alpha-1}\int_{\frac{u}{4}}^{+\infty}\mathbb{P}_X(dx)\bigg),\\
&h^{(\alpha,\lambda)}(\tau,u)\leq C^2_{a,b,\lambda,\alpha}\bigg(u^{\alpha}\exp\big(-\lambda \frac{u}{4(1-a)}\big)+u^{\alpha-1}\mathbb{E}[X\mathbb{I}_{\{X>\frac{u}{4}\}}]\bigg),\\
&k^{(\alpha,\lambda)}(\tau,u)\leq C^3_{a,b,\lambda,\alpha}\bigg(u^{\alpha+1}\exp\big(-\lambda \frac{u}{4(1-a)}\big)+u^{\alpha-1}\mathbb{E}[X^2\mathbb{I}_{\{X>\frac{u}{4}\}}]\bigg),
\end{align*}
with $C^i_{a,b,\lambda,\alpha}$, $i\in\{1,2,3\}$, some strictly positive constants.
\end{lem}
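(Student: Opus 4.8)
The plan is to derive all three bounds from a single scheme applied to the integral representations of $g^{(\alpha,\lambda)}$ (Definition \ref{main}), $h^{(\alpha,\lambda)}$ and $k^{(\alpha,\lambda)}$ (Proposition \ref{Transtech1}). The one genuinely analytic ingredient will be the elementary estimate on the modified Bessel functions of the first kind
\begin{align*}
I_{\nu}(z)\leq \dfrac{(z/2)^{\nu}}{\Gamma(\nu+1)}\cosh(z)\leq \dfrac{(z/2)^{\nu}}{\Gamma(\nu+1)}e^{z},\qquad z>0,
\end{align*}
valid for $\nu\geq -1/2$ (it follows at once from the power series of $I_{\nu}$, with equality at $\nu=-1/2$). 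Since $\alpha\geq 1/2$, the orders $\alpha-1$, $\alpha$ and $\alpha+1$ appearing respectively in $g^{(\alpha,\lambda)}$, $h^{(\alpha,\lambda)}$ and $k^{(\alpha,\lambda)}$ are all $\geq -1/2$, so this estimate applies in each case.

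First I would substitute this bound, with $z=2\lambda\sqrt{ux\tau}/(1-\tau)$, into each representation. The key simplification is that the three exponential factors then combine into a perfect square,
\begin{align*}
-\dfrac{\lambda u}{1-\tau}-\dfrac{\lambda\tau x}{1-\tau}+\dfrac{2\lambda\sqrt{ux\tau}}{1-\tau}=-\dfrac{\lambda}{1-\tau}\big(\sqrt{u}-\sqrt{\tau x}\big)^{2}.
\end{align*}
After collecting the remaining powers, one checks that the powers of $\tau$ cancel, that the powers of $u$ reduce in all three cases to $u^{\alpha-1}$, and that there survives an extra factor $x^{j}$ in the integrand, with $j=0$ for $g^{(\alpha,\lambda)}$, $j=1$ for $h^{(\alpha,\lambda)}$ and $j=2$ for $k^{(\alpha,\lambda)}$ (this comes from the mismatch between the Bessel order and the exponent of $1/x$). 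Since $\tau$ ranges over the compact set $[a,b]\subset(0,1)$, every residual $\tau$- and $(1-\tau)$-dependent prefactor is bounded by a constant $C_{a,b,\lambda,\alpha}$; this yields for instance
\begin{align*}
g^{(\alpha,\lambda)}(\tau,u)\leq C_{a,b,\lambda,\alpha}\,u^{\alpha-1}\int_{0}^{+\infty}\exp\Big(-\dfrac{\lambda}{1-\tau}\big(\sqrt{u}-\sqrt{\tau x}\big)^{2}\Big)\mathbb{P}_{X}(dx),
\end{align*}
and analogous bounds with an additional $x$, resp. $x^{2}$, inside the integral for $h^{(\alpha,\lambda)}$ and $k^{(\alpha,\lambda)}$.

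Then I would split each of these integrals over the regions $\{x\leq u/4\}$ and $\{x>u/4\}$. On $\{x\leq u/4\}$, since $\tau<1$ one has $\sqrt{\tau x}\leq \sqrt{u}/2$, hence $\big(\sqrt{u}-\sqrt{\tau x}\big)^{2}\geq u/4$ and $\exp\big(-\frac{\lambda}{1-\tau}(\sqrt{u}-\sqrt{\tau x})^{2}\big)\leq \exp\big(-\frac{\lambda u}{4(1-\tau)}\big)\leq \exp\big(-\frac{\lambda u}{4(1-a)}\big)$ uniformly in $\tau\in[a,b]$; moreover $x^{j}\leq(u/4)^{j}$, which is exactly what promotes $u^{\alpha-1}$ to $u^{\alpha-1}$, $u^{\alpha}$ and $u^{\alpha+1}$ respectively. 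On $\{x>u/4\}$ I would simply bound the exponential by $1$, leaving $u^{\alpha-1}\int_{u/4}^{+\infty}\mathbb{P}_{X}(dx)$, $u^{\alpha-1}\mathbb{E}[X\mathbb{I}_{\{X>u/4\}}]$ and $u^{\alpha-1}\mathbb{E}[X^{2}\mathbb{I}_{\{X>u/4\}}]$, all finite thanks to the assumed first and second moments. Adding the two contributions gives precisely the three displayed inequalities.

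The only steps that need care are the justification of the Bessel estimate in the range $\nu\geq-1/2$ — which is exactly where the hypothesis $\alpha\geq 1/2$ is used (the borderline case $\alpha=1/2$ can alternatively be treated directly via the closed form of $I_{-1/2}$) — and the bookkeeping of exponents leading to the uniform factor $u^{\alpha-1}$; both are routine, so no serious obstacle is expected.
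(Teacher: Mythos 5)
Your proposal is correct and follows essentially the same route as the paper's proof: bound $I_{\nu}(z)$ by $\frac{(z/2)^{\nu}}{\Gamma(\nu+1)}\cosh(z)\le C z^{\nu}e^{z}$ (the paper invokes inequality $(6.25)$ of Luke for $\alpha>1/2$ and treats $\alpha=1/2$ via the closed form of $I_{-1/2}$, whereas you note the estimate holds uniformly for $\nu\ge -1/2$), combine the exponentials into $\exp\big(-\frac{\lambda}{1-\tau}(\sqrt{u}-\sqrt{\tau x})^{2}\big)$, and split the integral near $x\approx u/4$ (the paper cuts at $u/(4\tau)$, you at $u/4$; both give the same two terms). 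The bookkeeping of the extra factors $x^{j}$ for $h^{(\alpha,\lambda)}$ and $k^{(\alpha,\lambda)}$ matches what the paper leaves implicit, so no gap remains.
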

\begin{proof}
By definition, for every $(\tau,u)\in (a,b)\times(0,+\infty)$, we have:
\begin{align*}
g^{(\alpha,\lambda)}(\tau,u)=&\dfrac{\lambda}{1-\tau}\big(\dfrac{1}{\tau}\big)^{\frac{\alpha-1}{2}}u^{\frac{\alpha-1}{2}}\exp\big(-\frac{\lambda}{1-\tau}u\big)\\
&\times\int_{0}^{+\infty}\big(\dfrac{1}{x}\big)^{\frac{\alpha-1}{2}}\exp\big(-\dfrac{\lambda\tau}{1-\tau}x\big)I_{\alpha-1}\big(\dfrac{2\lambda\sqrt{ux\tau}}{1-\tau}\big)\mathbb{P}_X(dx)\\
\end{align*}
By inequality $(6.25)$ in \cite{INEQ} which holds for $\alpha>1/2$ (for $\alpha=1/2$, we use the explicit expression of $I_{-1/2}(.)$), we have the following estimate:
\begin{align*}
I_{\alpha-1}\big(\dfrac{2\lambda\sqrt{ux\tau}}{1-\tau}\big)&\leq \dfrac{1}{2^{\alpha-1}\Gamma(\alpha)} \bigg(\dfrac{2\lambda\sqrt{ux\tau}}{1-\tau}\bigg)^{\alpha-1}\cosh\big(\dfrac{2\lambda\sqrt{ux\tau}}{1-\tau}\big),\\
&\leq C_{a,b,\lambda,\alpha}(ux)^{\frac{\alpha-1}{2}}\exp\big(\dfrac{2\lambda\sqrt{ux\tau}}{1-\tau}\big).
\end{align*}
Thus, we have:
\begin{align*}
g^{(\alpha,\lambda)}(\tau,u)&\leq C^1_{a,b,\lambda,\alpha}u^{\alpha-1}\int_{0}^{+\infty}\exp\big(-\frac{\lambda}{1-\tau}(\sqrt{u}-\sqrt{x\tau})^2\big)\mathbb{P}_X(dx),\\
&\leq C^1_{a,b,\lambda,\alpha}u^{\alpha-1}\bigg(\int_0^{\frac{u}{4\tau}}\exp\big(-\frac{\lambda}{1-\tau}(\sqrt{u}-\sqrt{x\tau})^2\big)\mathbb{P}_X(dx)\\
&+\int_{\frac{u}{4\tau}}^{+\infty}\exp\big(-\frac{\lambda}{1-\tau}(\sqrt{u}-\sqrt{x\tau})^2\big)\mathbb{P}_X(dx)\bigg),\\
&\leq C^1_{a,b,\lambda,\alpha}u^{\alpha-1}\bigg(\exp\big(-\frac{\lambda}{1-\tau}\frac{u}{4}\big)+\int_{\frac{u}{4}}^{+\infty}\mathbb{P}_X(dx)\bigg).
\end{align*}
We proceed similarly for the functions $h^{(\alpha,\lambda)}(\tau,u)$ and $k^{(\alpha,\lambda)}(\tau,u)$.
\end{proof}
\begin{lem}\label{tech3}
Let $\alpha\geq 1/2$. There exists a strictly positive constant $A_{X,\alpha,\lambda,a,b}$ such that:
\begin{align*}
\big|\log\big(g^{(\alpha,\lambda)}(\tau,u)\big)\big|\leq A_{X,\alpha,\lambda,a,b}+2\dfrac{\lambda u}{1-b}+|\alpha-1|\big|\log(u)\big|.
\end{align*}
\end{lem}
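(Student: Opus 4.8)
The plan is to take the logarithm of the explicit density formula \eqref{den} and bound each of the resulting terms separately, the only non-elementary piece being the Bessel integral. Writing
\begin{align*}
\log\big(g^{(\alpha,\lambda)}(\tau,u)\big)=\log\Big(\frac{\lambda}{1-\tau}\Big)+\frac{\alpha-1}{2}\log\Big(\frac{u}{\tau}\Big)-\frac{\lambda u}{1-\tau}+\log\big(J(\tau,u)\big),
\end{align*}
with $J(\tau,u)=\int_0^{+\infty}x^{-\frac{\alpha-1}{2}}\exp\big(-\frac{\lambda\tau x}{1-\tau}\big)I_{\alpha-1}\big(\frac{2\lambda\sqrt{ux\tau}}{1-\tau}\big)\,\mathbb{P}_X(dx)$, one notes that for $\tau$ in the compact set $[a,b]\subset(0,1)$ the quantities $\log(\lambda/(1-\tau))$ and $\frac{\alpha-1}{2}\log\tau$ are bounded by a constant depending only on $\alpha,\lambda,a,b$; that $\big|\frac{\alpha-1}{2}\log u\big|\le\frac{|\alpha-1|}{2}|\log u|$; and that $-\lambda u/(1-\tau)$ is nonpositive (hence harmless for an upper bound) while $-\lambda u/(1-\tau)\ge-\lambda u/(1-b)$ since $1-\tau>1-b$. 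So everything reduces to a two-sided control of $\log(J(\tau,u))$.

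For the upper bound I would reuse the estimate already exploited in the proof of Lemma \ref{tech2}: by inequality $(6.25)$ of \cite{INEQ} (valid for $\alpha>1/2$; for $\alpha=1/2$ one uses the closed form of $I_{-1/2}$) one has $I_{\alpha-1}(z)\le\frac{z^{\alpha-1}}{2^{\alpha-1}\Gamma(\alpha)}\cosh(z)$. Substituting $z=\frac{2\lambda\sqrt{ux\tau}}{1-\tau}$, bounding $\cosh(z)\le e^z$ and completing the square in the exponent gives
\begin{align*}
J(\tau,u)\le C_{\alpha,\lambda,a,b}\,u^{\frac{\alpha-1}{2}}\,e^{\frac{\lambda u}{1-\tau}}\int_0^{+\infty}\exp\Big(-\frac{\lambda}{1-\tau}\big(\sqrt{u}-\sqrt{x\tau}\big)^2\Big)\mathbb{P}_X(dx)\le C_{\alpha,\lambda,a,b}\,u^{\frac{\alpha-1}{2}}\,e^{\frac{\lambda u}{1-\tau}},
\end{align*}
because $\mathbb{P}_X$ is a probability measure and the integrand is $\le1$. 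Crucially $\log(J(\tau,u))\le\log C_{\alpha,\lambda,a,b}+\frac{\alpha-1}{2}\log u+\frac{\lambda u}{1-\tau}$, and the last summand exactly cancels the term $-\lambda u/(1-\tau)$ in the expansion of $\log g^{(\alpha,\lambda)}$, so that in fact $\log g^{(\alpha,\lambda)}(\tau,u)\le A_1+|\alpha-1||\log u|$ with no linear-in-$u$ term at all.

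For the lower bound I would use that all terms in the series of the modified Bessel function are nonnegative, whence $I_{\alpha-1}(z)\ge\frac{(z/2)^{\alpha-1}}{\Gamma(\alpha)}$ for $z>0$ (legitimate since $\alpha-1>-1$). This gives
\begin{align*}
J(\tau,u)\ge\frac{1}{\Gamma(\alpha)}\Big(\frac{\lambda\sqrt{\tau}}{1-\tau}\Big)^{\alpha-1}u^{\frac{\alpha-1}{2}}\int_0^{+\infty}\exp\Big(-\frac{\lambda\tau x}{1-\tau}\Big)\mathbb{P}_X(dx)=\frac{1}{\Gamma(\alpha)}\Big(\frac{\lambda\sqrt{\tau}}{1-\tau}\Big)^{\alpha-1}u^{\frac{\alpha-1}{2}}\,L(\mathbb{P}_X)\Big(\frac{\lambda\tau}{1-\tau}\Big).
\end{align*}
As $\tau$ varies over $(a,b)$ the point $\lambda\tau/(1-\tau)$ stays in the compact subinterval $[\lambda a/(1-a),\lambda b/(1-b)]$ of $(0,+\infty)$, on which the Laplace transform $L(\mathbb{P}_X)$ of the probability measure $\mathbb{P}_X$ is continuous and strictly positive; together with the fact that $(\lambda\sqrt{\tau}/(1-\tau))^{\alpha-1}$ is bounded below by a positive constant on $(a,b)$, this yields $J(\tau,u)\ge c_{X,\alpha,\lambda,a,b}\,u^{\frac{\alpha-1}{2}}$ with $c_{X,\alpha,\lambda,a,b}>0$, hence $\log(J(\tau,u))\ge\log c_{X,\alpha,\lambda,a,b}-\frac{|\alpha-1|}{2}|\log u|$.

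Combining the two estimates with the elementary bounds on the remaining terms, the upper estimate reads $\log g^{(\alpha,\lambda)}(\tau,u)\le A_1+|\alpha-1||\log u|$ and the lower one $\log g^{(\alpha,\lambda)}(\tau,u)\ge-A_2-|\alpha-1||\log u|-\lambda u/(1-b)$ for constants $A_1,A_2$ depending only on $X,\alpha,\lambda,a,b$; taking $A_{X,\alpha,\lambda,a,b}=\max(A_1,A_2)$ gives the claimed inequality, with the factor $2$ in front of $\lambda u/(1-b)$ leaving ample room. I expect the main obstacle to be the lower bound on the Bessel integral — this is the only step where the law of $X$ genuinely enters, and it hinges on the uniform positivity of $\tau\mapsto L(\mathbb{P}_X)(\lambda\tau/(1-\tau))$ over $(a,b)$, which follows from compactness together with the continuity and strict positivity of the Laplace transform of a probability measure supported on $(0,+\infty)$.
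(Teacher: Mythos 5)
Your proposal is correct and follows essentially the same route as the paper: the upper bound comes from the right-hand side of inequality (6.25) of \cite{INEQ} together with completing the square (exactly the estimate behind Lemma \ref{tech2}, giving $g^{(\alpha,\lambda)}(\tau,u)\leq C\,u^{\alpha-1}$), and the lower bound comes from the first term of the Bessel series (the left-hand side of (6.25)) plus the uniform positivity of $\int_0^{+\infty}e^{-\lambda\tau x/(1-\tau)}\,\mathbb{P}_X(dx)$ for $\tau\in(a,b)$, giving $g^{(\alpha,\lambda)}(\tau,u)\geq c_{X}\,u^{\alpha-1}e^{-\lambda u/(1-b)}$. Whether one then sandwiches $g^{(\alpha,\lambda)}$ directly, as the paper does, or decomposes $\log g^{(\alpha,\lambda)}$ term by term, as you do, is only a cosmetic difference.
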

\begin{proof}
Let $(a,b)\subsetneq(0,1)$. For every $(\tau,u)\in (a,b)\times(0,+\infty)$, we have:
\begin{align*}
g^{(\alpha,\lambda)}(\tau,u)=&\dfrac{\lambda}{1-\tau}\big(\dfrac{1}{\tau}\big)^{\frac{\alpha-1}{2}}u^{\frac{\alpha-1}{2}}\exp\big(-\frac{\lambda}{1-\tau}u\big)\\
&\times\int_{0}^{+\infty}\big(\dfrac{1}{x}\big)^{\frac{\alpha-1}{2}}\exp\big(-\dfrac{\lambda\tau}{1-\tau}x\big)I_{\alpha-1}\big(\dfrac{2\lambda\sqrt{ux\tau}}{1-\tau}\big)\mathbb{P}_X(dx).\\
\end{align*}
From the previous lemma, we clearly have:
\begin{align*}
g^{(\alpha,\lambda)}(\tau,u)\leq C^1_{a,b,\lambda,\alpha}u^{\alpha-1}.
\end{align*}
Moreover, using the following inequality (see the left-hand side of inequality $(6.25)$ in \cite{INEQ}):
\begin{align*}
\forall\nu>-\frac{1}{2},\ \forall z>0,\ I_{\nu}(z)>\dfrac{z^{\nu}}{2^{\nu}}\dfrac{1}{\Gamma(\nu)},
\end{align*}
we obtain:
\begin{align*}
g^{(\alpha,\lambda)}(\tau,u)\geq C_{X,a,b,\lambda,\alpha}u^{\alpha-1}\exp\bigg(-\dfrac{\lambda u}{1-b}\bigg).
\end{align*}
Thus,
\begin{align*}
C_{X,a,b,\lambda,\alpha}u^{\alpha-1}\exp\bigg(-\dfrac{\lambda u}{1-b}\bigg)\leq g^{(\alpha,\lambda)}(\tau,u)\leq C^1_{a,b,\lambda,\alpha}u^{\alpha-1}.
\end{align*}
Consequently, we obtain:
\begin{align*}
&\big|\log\big(g^{(\alpha,\lambda)}(\tau,u)\big)\big|\\
&\leq \big|\log\big(\frac{g^{(\alpha,\lambda)}(\tau,u)}{C_{X,a,b,\lambda,\alpha}u^{\alpha-1}\exp\bigg(-\dfrac{\lambda u}{1-b}\bigg)}\big)\big|+\big|\log\big(C_{X,a,b,\lambda,\alpha}u^{\alpha-1}\exp\bigg(-\dfrac{\lambda u}{1-b}\bigg)\big)\big|,\\
&\leq \big|\log\big(\frac{C^1_{a,b,\lambda,\alpha}u^{\alpha-1}}{C_{X,a,b,\lambda,\alpha}u^{\alpha-1}\exp\bigg(-\dfrac{\lambda u}{1-b}\bigg)}\big)\big|+|\log(C_{X,a,b,\lambda,\alpha})|+|\alpha-1||\log(u)|+\dfrac{\lambda u}{1-b},\\
&\leq |\log(C^1_{a,b,\lambda,\alpha})|+2|\log(C_{X,a,b,\lambda,\alpha})|+|\alpha-1||\log(u)|+2\dfrac{\lambda u}{1-b}.
\end{align*}
The result then follows.
\end{proof}

\section*{Acknowledgements}
 
The authors thank Michel Ledoux, Ivan Nourdin and Giovanni Peccati for
fruitful discussions on  a preliminary version of this paper. We are
particularly grateful to  Michel Ledoux for his insight which led to
the results presented in Sections \ref{sec-LSI} and \ref{sec-HSI}.

%%%%%%%%%%%%%%%%%%%%%%%%%%%%%%%%%%%%%%%%%%%%%%%%%%%%%%%%%%%%%%%%%%%%%%%%%%%%%%%%%%%%%%%%%%%%%%%%%%%%%%%%%%%%%%%%%%

\def\cprime{$'$}

\end{document}